

\NeedsTeXFormat{LaTeX2e}

\newcommand{\R}{\mathds R}
\newcommand{\C}{\mathds C}
\newcommand{\Z}{\mathds Z}
\newcommand{\N}{\mathds N}

\newcommand{\llangle}{\langle\!\!\langle}
\newcommand{\rrangle}{\rangle\!\!\rangle}
\newcommand{\Dds}{\tfrac{\mathrm D}{\mathrm ds}}
\newcommand{\dds}{\tfrac{\mathrm d}{\mathrm ds}}

\newcommand{\Dim}{\mathrm{dim}}
\newcommand{\Ker}{\mathrm{Ker}}
\newcommand{\de}{\mathrm{d}}
\newcommand{\Hi}{{\mathcal H}}

\documentclass[twoside,final,a4paper,10pt]{amsart}

\usepackage{times}
\usepackage{dsfont}

\renewcommand{\contentsline}[3]{\csname new#1\endcsname{#2}{#3}}
\newcommand{\newchapter}[2]{\bigskip\hbox to \hsize{\vbox{\advance\hsize by -.5cm\baselineskip=12pt\parfillskip=0pt\leftskip=2cm\noindent\hskip -2cm #1\leaders\hbox{.}\hfil\hfil\par}$\,$#2\hfil}}
\newcommand{\newsection}[2]{\medskip\hbox to \hsize{\vbox{\advance\hsize by -.5cm\baselineskip=12pt\parfillskip=0pt\leftskip=2.5cm\noindent\hskip -2cm #1\leaders\hbox{.}\hfil\hfil\par}$\,$#2\hfil}}
\newcommand{\newsubsection}[2]{\medskip\hbox to \hsize{\vbox{\advance\hsize by -.5cm\baselineskip=12pt\parfillskip=0pt\leftskip=3.5cm\noindent\hskip -2cm #1\leaders\hbox{.}\hfil\hfil\par}$\,$#2\hfil}}

\numberwithin{equation}{section}


\title[Iteration of closed Lorentzian geodesics]{Iteration of closed geodesics\\ in stationary
Lorentzian manifolds}
\author[M. A. Javaloyes]{Miguel Angel Javaloyes}
\address{Departamento de Matem\'atica,\hfill\break\indent
Universidade de S\~ao Paulo, \hfill\break\indent Rua do Mat\~ao
1010,\hfill\break\indent CEP 05508-900, S\~ao Paulo, SP, Brazil}
\email{majava@ime.usp.br}
\author[L. L. de Lima]{Levi Lopes de Lima}
\address{Departamento de Matem\'atica,\hfill\break\indent
Universidade Federal do Cear\'a, \hfill\break\indent
Campus do Pici,\hfill\break\indent
R. Humberto Monte, s/n,\hfill\break\indent CEP 60455-760, Fortaleza/CE, Brazil.}
\email{levi@mat.ufc.br}
\author[P.\ Piccione]{Paolo Piccione}
\address{Departamento de Matem\'atica,\hfill\break\indent
Universidade de S\~ao Paulo, \hfill\break\indent Rua do Mat\~ao
1010,\hfill\break\indent CEP 05508-900, S\~ao Paulo, SP, Brazil}
\email{piccione@ime.usp.br}
\urladdr{http://www.ime.usp.br/\~{}piccione}
\thanks{M. A. J. is sponsored by Fapesp; L. L. L. and P. P. are partially sponsored by CNPq.
The research project that lead to the results in this paper was developed while M. A. J. and P. P.
were visiting the Mathematics Department of the
Universidade Federal do Cear\'a, Fortaleza, Brazil, in January 2007.
The authors express their gratitude to the \emph{Instituto do Mil\^enio}, IMPA, Rio de Janeiro, Brazil,
for providing financial support to this project.}

\subjclass[2000]{53C22, 58E10, 53C50, 37B30}


\date{May 4th, 2007}

\begin{document}


\theoremstyle{plain}\newtheorem*{teon}{Theorem}
\theoremstyle{definition}\newtheorem*{defin*}{Definition}
\theoremstyle{plain}\newtheorem{teo}{Theorem}[section]
\theoremstyle{plain}\newtheorem{prop}[teo]{Proposition}
\theoremstyle{plain}\newtheorem{lem}[teo]{Lemma}
\theoremstyle{plain}\newtheorem{cor}[teo]{Corollary}
\theoremstyle{definition}\newtheorem{defin}[teo]{Definition}
\theoremstyle{remark}\newtheorem{rem}[teo]{Remark}
\theoremstyle{plain} \newtheorem{assum}[teo]{Assumption}
\swapnumbers
\theoremstyle{definition}\newtheorem{example}{Example}[section]
\theoremstyle{plain} \newtheorem*{acknowledgement}{Acknowledgements}
\theoremstyle{definition}\newtheorem*{notation}{Notation}


\begin{abstract}
Following the lines of \cite{Bo4}, we study the Morse index of the iterated of
a closed geodesic in stationary Lorentz\-ian manifolds, or, more generally,
of a closed Lorentz\-ian geodesic that admits a timelike periodic Jacobi field.
Given one such closed geodesic $\gamma$, we prove the existence of a locally constant integer valued map
$\Lambda_\gamma$ on the unit circle with the property that the Morse index of the iterated
$\gamma^N$ is equal, up to a correction term $\epsilon_\gamma\in\{0,1\}$, to the sum of
the values of $\Lambda_\gamma$ at the $N$-th roots of unity. The discontinuities of $\Lambda_\gamma$
occur at a finite number of points of the unit circle, that are special eigenvalues of the
linearized Poincar\'e map of $\gamma$.
We discuss some applications of the theory.
\end{abstract}

\maketitle
\tableofcontents

\begin{section}{Introduction}
It is well known that, unlike the Riemannian case, the geodesic action functional of
a manifold endowed with a non positive definite metric tensor is always \emph{strongly indefinite},
i.e., all its critical points have infinite Morse index. However, given a Lorentzian
manifold $(M,\mathfrak g)$ that has a timelike Killing vector field $\mathcal K$, one can consider a constrained geodesic
variational problem whose critical points have finite Morse index (\cite{BilMerPic, asian, CAG1, Masiello});
the value of this index is computed in terms of a symplectic invariant related to the Conley--Zehnder index
and the Maslov index of the linearized geodesic flow.

Following the classical Riemannian results, one wants to prove
multiplicity results for closed geodesics using variational methods,
including equivariant Morse theory. The closed geodesic variational problem is
invariant by the action of the compact Lie group $\mathrm O(2)$;
every $\mathrm O(2)$-orbit of a closed geodesic contributes to the
homology of the free loop space.
In order to obtain multiplicity results, one needs to distinguish between
critical orbits generated by the \emph{tower of iterates} $(\gamma^N)_{N\ge1}$
of the same geodesic $\gamma$. As proved by Gromoll and Meyer in the celebrated
paper \cite{GroMey2}, fine estimates on the homological contribution of
iterated closed geodesics can be given in terms of the Morse index and the nullity
of the iterate. Thus, an essential step in the development of the Morse
theory  for closed geodesics is to establish the growth
of the Morse index by iteration of a given closed geodesic.
The deepest results in this direction for the Riemannian case are due to Bott
in the famous paper \cite{Bo4}; using complexifications and a suitable intersection theory,
Bott proves that all the information
on the Morse index and the nullity of the iterates of a given closed
Riemannian geodesic is encoded into two integer valued functions
on the unit circle.
Following Bott's ideas, in this paper we will prove
some iteration formulas for the Morse index of the critical points
of the constrained variational problem for stationary closed Lorentzian geodesics
mentioned above. More
precisely, given a closed geodesic $\gamma$, we will show (Theorem~\ref{thm:iterazioneindice})
the existence of an
integer valued function $\Lambda_\gamma$ on the circle $\mathds S^1$ with the
property that, given a closed geodesic $\gamma$, its  $N$-iterate $\gamma^N$ has Morse
index $\mu(\gamma^N)$ given by the sum of the values of $\Lambda_\gamma$
at the $N$-th roots of unity,
$k=1,\ldots,N$, plus a correction term $\epsilon_\gamma\in\{0,1\}$.
The difference $\mu_0(\gamma)=\mu(\gamma)-\epsilon_\gamma$, the \emph{restricted Morse index}
of $\gamma$, plays a central role
in the theory; it can be interpreted as the index of the second variation of the
geodesic action functional restricted to the space of variational vector fields arising
from variations of $\gamma$ by curves $\mu$ satisfying $g(\dot\mu,\mathcal K)=g(\dot\gamma,\mathcal K)\
\text{(constant)}$.

In analogy with the Riemannian case, $\Lambda_\gamma$ is a lower semi-continuous function on the circle
(except possibly at $1$ in a singular case mentioned below) and its jumps can occur only at points of
$\mathds S^1$ that belong to the spectrum of the (complexified) \emph{linearized Poincar\'e} map $\mathfrak
P_\gamma$ of $\gamma$. Given one such discontinuity point $\rho\in\mathds S^1$, the (complex)
dimension of  the kernel of $\mathfrak P_\gamma-\rho$ is an upper bound for the value of the
jump of $\Lambda_\gamma$ at $\rho$. Explicit, although extremely involved, computations can be
attempted in order to compute the precise value of each jump of $\Lambda_\gamma$
(see Subsection~\ref{sub:jumpsindexfnctn}). It may be interesting to observe here that
the question is reduced to an algebraic counting of the zeros in the spectrum (i.e., the \emph{spectral flow})
of an \emph{analytic} path of Fredholm self-adjoint operators, for which a finite dimensional reduction
 and a higher order method are available (see \cite{GPP}).

As a special case of our iteration formula, we show that if $\gamma$ is
\emph{strongly hyperbolic} (i.e., $\epsilon_\gamma=0$ and there is no eigenvalue of $\mathfrak P_\gamma$ on
the unit circle), then the restricted Morse index of $\gamma^N$ is equal to the
restricted Morse index of $\gamma$ multiplied by $N$. Also, the correction term
$\epsilon_{\gamma^N}$ coincides with $\epsilon_\gamma$ for all $N$.
As an application of this
fact, we will use an argument from equivariant Morse theory to prove (Proposition~\ref{thm:BTZ}) the existence of infinitely many
\emph{geometrically distinct} closed geodesics in a class of non
simply connected globally hyperbolic stationary spacetimes,
generalizing the results of \cite{BalThoZil}.

A second important application of the theory developed in this paper is the
proof of a \emph{uniform linear growth} for the index of an iterate (Proposition~\ref{eq:stimamigliore});
this is a crucial step in Gromoll and Meyer's result on the existence of infinitely
many closed geodesics in compact Riemannian manifolds.
The uniform estimate on the linear growth allows to prove that the
contribution to the homology of the free loop space in a fixed dimension
provided by the tower of iterates of a given closed geodesic
only depends on a uniformly bounded number of iterates.

Compared to the Riemannian case, several new phenomena appear in the stationary Lorentzian
case. In first place, the question of the correction term $\epsilon_\gamma$ is somewhat puzzling,
as this part of the index is not detected by the values of the function $\Lambda_\gamma$ on
$\mathds S^1\setminus\{1\}$. Its geometric interpretation is a little involved; roughly speaking,
$\epsilon_\gamma$ vanishes when $\gamma$ can be perturbed to a curve with less energy only by
curves that ``form a fixed angle'' with the timelike Killing field $\mathcal K$. Infinitesimally,
this amounts to saying that the index of the index form does not decrease when the form is restricted
to the space of variations $V$ satisfying $\mathfrak g(V',\mathcal K)-\mathfrak g(V,\smash{\mathcal K}')=0$,
where the prime denotes covariant differentiation along $\gamma$. Particularly significative is the fact
that the correction term $\epsilon_\gamma$ is the same for all the geodesics in the tower of iterates of $\gamma$.
An important class of examples of geodesics $\gamma$ with $\epsilon_\gamma=0$ is obtained by taking
geodesics that are everywhere orthogonal to $\mathcal K$ in the \emph{static} case,
i.e., when the orthogonal distribution $\smash{\mathcal K}^\perp$ to $\mathcal K$ is integrable
(see Example~\ref{exa:casostatico}).
In this case, every integral submanifold of $\smash{\mathcal K}^\perp$ is a Riemannian
totally geodesic hypersurface of $M$; this suggests that $\epsilon_\gamma$ can be interpreted
as a sort of measure of the ``non Riemannian behavior'' of the closed geodesic $\gamma$.
It is plausible to conjecture that the question of the distribution of conjugate points
along $\gamma$ be related to the value of $\epsilon_\gamma$; the results of an investigation in this
direction are left to a forthcoming paper.

A second peculiar phenomenon of the stationary Lorentzian case is the existence of
a singular class of closed geodesics $\gamma$, that are characterized by the fact that
the covariant derivative $\smash{\mathcal K}'$ along $\gamma$ is pointwise multiple
of the projection of $\mathcal K$ onto the orthogonal space $\dot\gamma^\perp$;
this includes in particular all closed geodesics along which
the Killing field is parallel. As it is shown in \cite{JMP07}, the fact that $\mathcal K$ is singular along $\gamma$  is equivalent to the existence of a family of parallel vectorfields along the geodesics that generate $\mathcal K(s)^\bot$ for every $s\in [0,1]$. This is true in particular when the geodesic is contained in a totally geodesic hypersurface orthogonal to $\mathcal K$. Again, orthogonal geodesics are singular in the static
case. When $\gamma$ is singular, the question of semi-continuity of the function $\Lambda_\gamma$
is more delicate, and, in fact, it may fail to hold at the point $1$ if $\epsilon_\gamma=1$, even
when the linearized Poincar\'e map of $\gamma$ does not contain $1$ in its spectrum.

As already observed in \cite{Bo4}, in spite of the initial geometrical motivation the
theory developed in the present paper is better cast in the language of Morse--Sturm differential
systems in the complex space $\mathds C^n$. We will first discuss our results in this
abstract setup (Section~\ref{sec:morsesturm}); the reduction of the stationary Lorentzian
geodesic problem  to a Morse--Sturm system is done via a parallel (not necessarily periodic)
trivialization of the orthogonal bundle $\dot\gamma^\perp$ along the geodesic $\gamma$
(Subsection~\ref{sub:geomorsesturm}). As to this point, it is interesting to observe
here that, if on one hand to use a parallel trivialization simplifies the corresponding
Morse--Sturm system and its index form (see \eqref{eq:defMorseSturm}, \eqref{eq:defIndexformCn}),
on the other hand the lack of periodicity of such trivializations imposes the introduction
of more involved boundary conditions. In our notations, information on the boundary
conditions is encoded in the endomorphism $T$ (see Subsection~\ref{sub:basicsetup}),
which represents the parallel transport
along the geodesic. When the geodesic is orientable, then $T$ is (the complex extension
of an isomoprhism) in the connected component of the identity of $\mathrm{GL}(\R^{2n})$.
Using this setup, no distinction is necessary between orientable and non orientable closed
geodesics; recall that these two cases are distinguished in Bott's original work,
the non orientable ones corresponding to the $N$-th roots of $-1$. We observe that the theory developed in this work can be set in a more general background than stationary spacetimes, that is, when considering geodesics in general Lorentzian manifolds admiting a periodic timelike Jacobi field.
\smallskip

The paper is organized as follows. The main technical results, presented in the context
of abstract Morse--Sturm systems in $\C^n$, are discussed in the first part of the
paper (Sections~\ref{sec:morsesturm}, \ref{sec:nullity} and \ref{sec:sequence}), while the applications to closed
geodesics are discussed in Section~\ref{sec:closedgeodesics}.
In Section~\ref{sec:morsesturm} we set up the basis of the theory, with a description of
a class of complex Morse--Sturm systems that are symmetric with respect to a nondegenerate
Hermitian form of index $1$ in $\C^n$, their index form, and with the description of
two families of closed subspaces of the Sobolev space $H^1\big([0,1],\C^n\big)$.
These spaces are parameterized by unit complex numbers, and a central point
is determining their continuity with respect to the parameter. Although only the continuity of
these spaces is actually required in our theory, we will  show
that the dependence  is in fact analytic. The purpose of this fact
is that, in view to future developments, one might attempt to use higher order methods
for determining the value of the jumps of the index function (see Subsection~\ref{sub:jumpsindexfnctn}),
which require analyticity of the eigenvalues and of the eigenvectors of the corresponding
self-adjoint operators (see \cite{GPP}).
At this stage, this seems to be a rather involved question, that will be treated
only marginally in this paper.

In Section~\ref{sec:nullity} we use a functional analytical approach to determine the
kernel of the index form restricted to the family of closed subspaces mentioned above.
Section~\ref{sec:sequence} contains the main technical result of the paper (Proposition~\ref{thm:prop4.2}),
which is a formula relating the index of the $N$-iterate with the sum of the indexes of the index form
at the space of vector fields satisfying boundary conditions involving the $N$-th roots of
unity. Following Bott's suggestive terminology, we call this the \emph{Fourier theorem}.
In Section~\ref{sec:closedgeodesics} the theory is applied to the case of closed
geodesics in stationary Lorentzian manifolds, with some emphasis to the static case
which provides interesting examples of singular solutions.
Propositions~\ref{eq:stimamigliore} and \ref{thm:BTZ} are two direct applications of
our results to the global theory of closed Lorentzian geodesics, which is the final
objective of our research.
Finally, we conclude with a short section containing remarks, conjectures and
suggestions for future developments.

\end{section}

\begin{section}{On a class of non positive definite Morse--Sturm systems in $\mathds C^n$}
\label{sec:morsesturm}
\subsection{The basic setup}\label{sub:basicsetup}
Let us consider given the following objects:
\begin{itemize}
\item[(a)] $n$ is an integer greater than or equal to $1$
\item[(b)] $g$ is a nondegenerate symmetric bilinear form on $\R^n$  having index $1$, extended by sesquilinearity
to a nondegenerate Hermitian form on $\C^n$;
\item[(c)] $T:\mathds R^n\to\mathds R^n$ is a $g$-preserving linear isomorphism of $\mathds R^n$, extended by complex linearity
to a $g$-preserving isomorphism of $\C^n$;
\item[(d)] $[0,1]\ni t\mapsto R(t)\in\mathrm{End}(\mathds R^n)$ is a continuous map of $g$-symmetric
(i.e., $gR(t)=R(t)^*g$) linear endomorphisms of $\mathds R^n$ satisfying:
\[R(0)T=TR(1);\]
$R(t)$ is extended by complex linearity to a $g$-Hermitian endomorphism of $\C^n$.
\item[(e)] $Y:[0,1]\to\mathds R^n\subset\C^n$ is a $C^2$-solution of the \emph{Morse--Sturm system}:
\begin{equation}\label{eq:defMorseSturm}
V''(t)=R(t)V(t)
\end{equation}
that satisfies:
\begin{itemize}
\item[(e1)] $g(Y,Y)<0$ everywhere on $[0,1]$;
\item[(e2)] $TY(1)=Y(0)$ and $TY'(1)=Y'(0)$.
\end{itemize}
\end{itemize}
The solution $Y$ of \eqref{eq:defMorseSturm} will be called \emph{singular} if $Y'(s)$ is a multiple of $Y(s)$ for all
$s\in[0,1]$; the singularity of $Y$ is equivalent to either one of the following two conditions:
\begin{equation}\label{eq:defYsingular}
g(Y,Y)\,Y'=g(Y',Y)\,Y,\quad\text{or}\quad \left[\frac Y{g(Y,Y)}\right]'+\frac {Y'}{g(Y,Y)}=0\quad\text{on $[0,1]$.}
\end{equation}
The $g$-symmetry of $R$ implies that, given any two solutions $V_1$ and $V_2$ of \eqref{eq:defMorseSturm}, then
the quantity $g(V_1',V_2)-g(V_1,V_2')$ is constant on $[0,1]$:
\begin{equation}\label{eq:conservazione}
\dds\big[g(V_1',V_2)-g(V_1,V_2')\big]=g(V_1'',V_2)-g(V_1,V_2'')=g(RV_1,V_2)-g(V_1,RV_2)=0.
\end{equation}
We will consider extensions to the real line $Y:\R\to\C^n$ and $R:\R\to\mathrm{End}(\C^n)$
of the maps $Y$ and $R$ above by setting:
\begin{equation}\label{eq:2.1b}
Y(t+N)=T^{-N}Y(t),\quad R(t+N)=T^{-N}R(t)T^N,\quad\forall\,t\in\left[0,1\right[,\ N\in\Z;
\end{equation}
having this in mind, we also set $R_N(t)=R(tN)$ and $Y_N(t)=Y(tN)$ for all $t\in[0,1]$.
Observe that $Y_N$ is of class $C^2$ and $R_N$ is continuous on $[0,1]$, moreover, from \eqref{eq:2.1b}
one gets easily:
\begin{equation}\label{eq:periodYNeRN}
Y_N(t+k/N)=T^{-k}Y_N(t),\qquad R_N(t+k/N)=T^{-k}R_N(t)T^k,
\end{equation}
for every $k\in\Z$.
The \emph{$N$-th iterated} of the Morse--Sturm system \eqref{eq:defMorseSturm} is the Morse--Sturm
system:
\begin{equation}\label{eq:iteratedMS}
V''(t)=N^2R_N(t)V(t).
\end{equation}
If $Y$ is a singular solution of \eqref{eq:defMorseSturm}, then $Y_N$ is a singular solution of \eqref{eq:iteratedMS},
in which case equalities \eqref{eq:defYsingular} hold with $Y$ replaced by $Y_N$.
We will consider the following additional data.
\subsection{The index forms}
Let $\mathcal H$ be the Hilbert space
$H^1\big([0,1],\C^n\big)$ of $\C^n$-valued maps on the interval $[0,1]$ and of Sobolev class
$H^1$; moreover, for all $N\ge1$  let $I_N:\mathcal H\times\mathcal H\to\C$ be the bounded sesquilinear form:
\begin{equation}\label{eq:defIndexformCn}
I_N(V,W)=\int_0^1\big[g(V',W')+N^2g(R_NV,W)\big]\,\mathrm dt,\qquad V,W\in\mathcal H.
\end{equation}

We will also introduce a smooth family of positive definite Hermitian forms $g_t^{N}$ on $\C^n$, defined
using $Y$ by:
\begin{equation}
g_t^{N}(V,W)=g(V,W)-2\,\frac{g\big(V,Y_N(t)\big)\cdot g\big(W,Y_N(t)\big)}{g\big(Y_N(t),Y_N(t)\big)};
\end{equation}
denote by $A:[0,1]\times\N\to \mathcal{L}(\C^n)$ the smooth family of symmetric isomorphisms such that
\begin{equation}\label{eq:defAtN}
g(V,W)=g_t^{N}\big(A(t,N)V,W\big)
\end{equation}
for every $V,W\in\C^n.$
We will think of $\mathcal H$ endowed with a family of  Hilbert space inner products:
\begin{equation}\label{eq:defIP}
\llangle V,W\rrangle_N=\int_0^1\big[ g_t^{N}(V',W')+g_t^{N}(V,W)\big]\,\mathrm dt.
\end{equation}

\subsection{Analytic families of closed subspaces}\label{sub:closedsubspaces}
We will now define a family of closed subspaces of $\mathcal H$, as follows.
Let $\mathds S^1$ denote the set of unit complex numbers; for $\rho\in\mathds S^1$ and $N\ge1$, set:
\[\mathcal H^\rho(N)=\big\{ V\in\mathcal H: T^NV(1)=\rho^NV(0)\big\},\]
\[\mathcal H_*(N)=\Big\{V\in\mathcal H:g(V',Y_N)-g(V,Y_N')=C_V\ \text{(constant)\ a.e.\ on\ [0,1]}\Big\},\]
and:
\[\mathcal H_0(N)=\Big\{V\in\mathcal H:g(V',Y_N)-g(V,Y_N')=0\ \text{a.e.\ on}\ [0,1]\Big\}.\]
Finally, define:
\[\mathcal H_*^\rho(N)=\mathcal H_*(N)\cap\mathcal H^\rho(N),\quad \mathcal H_0^\rho(N)=\mathcal H_0(N)\cap\mathcal H^\rho(N).\]

\begin{prop}\label{thm:ker*}
The kernel of the restriction of $I_N$ to $\mathcal H^\rho(N)$ coincides with the kernel of the restriction
of $I_N$ to $\mathcal H_*^\rho(N)$, and it is given by the finite dimensional space:
\begin{equation}\label{eq:nucleo}
\Big\{\!V\in C^2\big([0,1],\C^n\big):  \text{$V$ solution of \eqref{eq:iteratedMS}},\ T^NV(1)=\rho^NV(0),\ T^NV'(1)=\rho^NV'(0)\!\Big\}.
\end{equation}
\end{prop}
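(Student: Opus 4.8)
Write $\mathcal N$ for the space on the right-hand side of \eqref{eq:nucleo}. I would split the argument into three claims:
(1) $\Ker\big(I_N|_{\mathcal H^\rho(N)}\big)=\mathcal N$;
(2) $\mathcal N\subseteq\mathcal H_*^\rho(N)$;
(3) $\Ker\big(I_N|_{\mathcal H_*^\rho(N)}\big)\subseteq\mathcal N$.
These suffice. By (1) and (2) the kernel $\Ker\big(I_N|_{\mathcal H^\rho(N)}\big)$ equals $\mathcal N$ and is contained in $\mathcal H_*^\rho(N)$; since then $I_N(V,\cdot)$ vanishes in particular on the subspace $\mathcal H_*^\rho(N)$ for every such $V$, this gives $\Ker\big(I_N|_{\mathcal H^\rho(N)}\big)\subseteq\Ker\big(I_N|_{\mathcal H_*^\rho(N)}\big)$, while (3) followed by (1) gives the opposite inclusion; hence both kernels equal $\mathcal N$. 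Claim (2) is a Wronskian computation: since $Y$ solves \eqref{eq:defMorseSturm}, $Y_N$ solves \eqref{eq:iteratedMS}, and $N^2R_N$ is $g$-symmetric, so by \eqref{eq:conservazione} applied to the iterated system the quantity $g(V',Y_N)-g(V,Y_N')$ is constant on $[0,1]$ for every $V\in\mathcal N$; hence $V\in\mathcal H_*(N)\cap\mathcal H^\rho(N)=\mathcal H_*^\rho(N)$. Finally, $\mathcal N$ is finite dimensional, being carved out of the $2n$-dimensional space of solutions of the linear system \eqref{eq:iteratedMS} by linear boundary conditions.

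For Claim (1), I would argue as in the classical Riemannian index theorem. Given $V$ in the kernel of $I_N|_{\mathcal H^\rho(N)}$, test $I_N(V,W)=0$ first against all $W\in C^\infty_c\big(\,]0,1[\,,\C^n\big)$; these lie in $\mathcal H^\rho(N)$ because they vanish at the endpoints, and the fundamental lemma of the calculus of variations together with the continuity of $R_N$ and the embedding $H^1\hookrightarrow C^0$ then force $V\in C^2$ with $V''=N^2R_NV$. Integrating by parts in $I_N(V,W)$ for a general $W\in\mathcal H$ collapses the integral and leaves only the boundary term $g(V'(1),W(1))-g(V'(0),W(0))$. For $W\in\mathcal H^\rho(N)$ one has $W(0)=\rho^{-N}T^NW(1)$; using that $T$ preserves $g$, that $|\rho|=1$, and sesquilinearity, this boundary term rewrites as $g\big(V'(1)-\rho^NT^{-N}V'(0),\,W(1)\big)$. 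As $W$ runs over $\mathcal H^\rho(N)$ its endpoint value $W(1)$ runs over all of $\C^n$ (take $W$ affine), so nondegeneracy of $g$ forces $T^NV'(1)=\rho^NV'(0)$, i.e. $V\in\mathcal N$. Conversely, the same boundary computation shows that every $V\in\mathcal N$ makes $I_N(V,\cdot)$ vanish on all of $\mathcal H$, a fortiori on $\mathcal H^\rho(N)$.

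Claim (3) is, as expected, the crux. Let $V\in\mathcal H_*^\rho(N)$ with $I_N(V,\cdot)=0$ on $\mathcal H_*^\rho(N)$. The obstacle is that a general compactly supported $W$ need not lie in $\mathcal H_*(N)$, so the regularity step above cannot be run directly — one must pass from the \emph{constrained} variational identity to the full Euler--Lagrange equation. The device is to correct $W$ by a multiple of the timelike solution $Y_N$ at no cost. Given $W\in C^\infty_c\big(\,]0,1[\,,\C^n\big)$, solve the scalar ODE $\phi'=\big(c-[g(W',Y_N)-g(W,Y_N')]\big)\big/g(Y_N,Y_N)$ with $\phi(0)=0$, choosing the constant $c$ so that $\phi(1)=0$ as well; this is possible since $g(Y_N,Y_N)<0$ on $[0,1]$ and hence $\int_0^1 g(Y_N,Y_N)^{-1}\,\mathrm dt\neq0$. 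Then $W+\phi Y_N\in\mathcal H_*^\rho(N)$, and an integration by parts using $Y_N''=N^2R_NY_N$ shows that $I_N(V,\phi Y_N)$ equals the integral of $g(V',Y_N)-g(V,Y_N')$ against $\overline{\phi'}$ plus a boundary term supported at $t=0,1$; the bracket is the constant $C_V$ because $V\in\mathcal H_*(N)$, and $\phi(0)=\phi(1)=0$, so $I_N(V,\phi Y_N)=0$. Therefore $I_N(V,W)=I_N(V,W+\phi Y_N)-I_N(V,\phi Y_N)=0$ for every such $W$, and Claim (1)'s argument now applies verbatim: $V$ is a $C^2$ solution of \eqref{eq:iteratedMS}, and testing $g\big(V'(1)-\rho^NT^{-N}V'(0),\,W(1)\big)=0$ over $W\in\mathcal H_*^\rho(N)$ — whose endpoint values still exhaust $\C^n$ after the same kind of correction — yields $T^NV'(1)=\rho^NV'(0)$, so $V\in\mathcal N$. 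I would stress that this correction is uniform in $\rho\in\mathds S^1$; in particular it handles the delicate case $\rho^N=1$, where $Y_N\in\mathcal H^\rho(N)$ and a naive restriction to $C^\infty_c$ would not recover the full equation. This constrained-to-unconstrained reduction is the step I expect to demand the most care.
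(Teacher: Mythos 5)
Your proof is correct and follows essentially the paper's own argument: the correction of $W$ by $\phi\,Y_N$ is precisely the decomposition $\mathcal H^\rho(N)=\mathcal H_*^\rho(N)+\mathfrak Y(N)$ that the paper invokes (with $\mathfrak Y(N)$ the span of $f\cdot Y_N$, $f\in H^1_0$), and your computation that $I_N(V,\phi Y_N)=C_V\big(\overline{\phi(1)}-\overline{\phi(0)}\big)=0$ is exactly the $I_N$-orthogonality of $\mathcal H_*^\rho(N)$ and $\mathfrak Y(N)$ which the paper cites without proof. You spell out the regularity and boundary-term steps that the paper leaves as ``follows easily from a partial integration,'' but the mechanism — reduce the constrained variational identity to the unconstrained one via the $Y_N$-correction — is the same.
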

\begin{proof}
That \eqref{eq:nucleo} is the kernel of $I_N$ in $\mathcal H^\rho(N)$ follows easily from a partial
integration in \eqref{eq:defIndexformCn}. Denote by $\mathfrak Y(N)$ the subspace of $\mathcal H^\rho(N)$
consisting of vector fields of the form $f\cdot Y_N$, where $f\in H^1\big([0,1],\C^n\big)$ is such that
$f(0)=f(1)=0$. The conclusion follows easily from the fact that $\mathcal H^\rho(N)=\mathcal H_*^\rho(N)+\mathfrak Y^\rho(N)$,
that the spaces $\mathcal H_*^\rho(N)$ and $\mathfrak Y^\rho(N)$ are $I_N$-orthogonal, that $I_N$ is negative definite on
$\mathfrak Y(N)$, and that
\eqref{eq:nucleo} is contained in $\mathcal H_*^\rho(N)$.
\end{proof}

\begin{cor}\label{thm:corker0ker*}
If $\rho$ is not an $N$-th root of unity, then $\Ker\big(I_N\vert_{\mathcal H_*^\rho(N)\times\mathcal H_*^\rho(N)}\big)\subset\mathcal H_0^\rho(N)$.
\end{cor}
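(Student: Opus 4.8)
The plan is to combine Proposition~\ref{thm:ker*} with the boundary conditions that define $\mathcal H^\rho(N)$ and with the fact that the parallel transport $T$ preserves $g$. Let $V$ lie in the kernel of $I_N$ restricted to $\mathcal H_*^\rho(N)$. By Proposition~\ref{thm:ker*}, $V$ belongs to the space \eqref{eq:nucleo}: it is a $C^2$ solution of the iterated system \eqref{eq:iteratedMS} with $T^NV(1)=\rho^NV(0)$ and $T^NV'(1)=\rho^NV'(0)$; moreover, as noted in the proof of that proposition, \eqref{eq:nucleo} is contained in $\mathcal H_*^\rho(N)$, so the function $g(V',Y_N)-g(V,Y_N')$ is a constant, say $C_V$. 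Since $V\in\mathcal H^\rho(N)$ is already given, it suffices to prove $C_V=0$, which is precisely the condition $V\in\mathcal H_0(N)$, hence $V\in\mathcal H_0^\rho(N)$.

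First I would record the boundary behaviour of $Y_N$. Setting $k=N$ and $t=0$ in the first identity of \eqref{eq:periodYNeRN} gives $Y_N(1)=T^{-N}Y_N(0)$, i.e.\ $T^NY_N(1)=Y_N(0)$; differentiating that identity in $t$ before evaluating gives likewise $T^NY_N'(1)=Y_N'(0)$. Thus $Y_N$ satisfies boundary conditions of the same form as $V$, but with $\rho$ replaced by $1$.

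The key step is then to evaluate $C_V$ at $t=1$ and transport it through $T^N$. Using that $T$ is $g$-preserving (so $g(T^N\cdot,T^N\cdot)=g(\cdot,\cdot)$) and then the two sets of boundary conditions,
\[
C_V=g\big(V'(1),Y_N(1)\big)-g\big(V(1),Y_N'(1)\big)=g\big(T^NV'(1),T^NY_N(1)\big)-g\big(T^NV(1),T^NY_N'(1)\big),
\]
and substituting $T^NV(1)=\rho^NV(0)$, $T^NV'(1)=\rho^NV'(0)$, $T^NY_N(1)=Y_N(0)$, $T^NY_N'(1)=Y_N'(0)$ yields $C_V=\rho^{N}\big[g(V'(0),Y_N(0))-g(V(0),Y_N'(0))\big]=\rho^{N}C_V$ (the scalar is $\rho^N$ or $\overline{\rho^{N}}=\rho^{-N}$ depending on the sesquilinearity convention for $g$, but since $\lvert\rho\rvert=1$ and $Y_N$ is $\R^n$-valued this is immaterial in what follows). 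Hence $(1-\rho^{N})C_V=0$, and because $\rho$ is not an $N$-th root of unity we have $\rho^{N}\neq1$, so $C_V=0$ and $V\in\mathcal H_0^\rho(N)$.

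There is essentially no analytic obstacle in this argument; the only points needing a little care are the bookkeeping — confirming that the elements of \eqref{eq:nucleo} indeed lie in $\mathcal H_*^\rho(N)$, so that $C_V$ is well defined and constant — and keeping track of any complex conjugation when the boundary identification is applied under the Hermitian form $g$. Everything else reduces to the one-line computation displayed above.
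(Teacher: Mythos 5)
Your argument is correct and coincides with the paper's own proof: both exploit $T^N$-invariance of $g$ together with the $\rho^N$-twisted periodicity of $V$ and the untwisted periodicity of $Y_N$ (from \eqref{eq:periodYNeRN}) to conclude $C_V=\rho^N C_V$ and hence $C_V=0$ when $\rho^N\ne1$. Your write-up just makes two steps explicit that the paper leaves implicit — the boundary identities for $Y_N$ and the invocation of Proposition~\ref{thm:ker*} to justify constancy of $C_V$ — but the underlying one-line computation is the same.
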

\begin{proof}
Recalling that $T$ is $g$-symmetric, we have:
\begin{multline*}
g\big(V'(0),Y_N(0)\big)-g\big(V(0),Y_N'(0)\big)=g\big(V'(1),Y_N(1)\big)-g\big(V(1),Y_N'(1)\big)\\=\rho^N\big[g\big(V'(0),Y_N(0)\big)-g\big(V(0),Y_N'(0)\big)\big],
\end{multline*}
from which the conclusion follows.
\end{proof}
Let us introduce the following:
\begin{defin}\label{thm:defC1subspaces}
Let $\mathfrak H$ be a complex Hilbert space, $I\subset\R$ an interval and
$\{\mathcal D^t\}_{t\in I}$ be a family of closed subspaces of
$\mathfrak H$. We say that $\{\mathcal D^t\}_{t\in I}$ is a \emph{$C^k$-family}, $k=0,\ldots,\infty$, (resp., an {\em
analytic family\/}) of subspaces if for all $t_0\in I$ there exist
$\varepsilon>0$, a $C^k$ (resp., an analytic) curve
$\alpha:\left]t_0-\varepsilon,t_0+\varepsilon\right[\cap I\mapsto
\mathcal L(\mathfrak H)$ and a closed subspace $\overline{\mathcal
D}\subset\mathfrak H$ such that $\alpha(t)$ is an isomorphism  and
$\alpha(t)(\mathcal D^t)=\overline{\mathcal D}$ for all~$t$.
\end{defin}
Definition~\ref{thm:defC1subspaces} is generalized obviously to the case
of families $\{\mathcal D^\theta\}_{\theta\in\mathds S^1}$ parameterized
on the circle.
Let us give a criterion for the smoothness
of a family of closed subspaces:
\begin{prop}\label{thm:produce}
Let $I\subset\R$ be an interval, $\mathfrak H,\tilde{\mathfrak H}$ be Hilbert
spaces and $F:I\mapsto\mathcal L(\mathfrak H,\tilde{\mathfrak H})$ be a $C^k$ (resp., analytic) map
such that each $F(t)$ is surjective. Then, the family $\mathcal D^t=\mathrm{Ker}\big(F(t)\big)$
is a $C^k$-family (resp., an analytic family) of closed subspaces of $\mathfrak H$.
\end{prop}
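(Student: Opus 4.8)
The plan is to argue locally around an arbitrary $t_0\in I$, exhibiting the isomorphism $\alpha$ and the fixed subspace $\overline{\mathcal D}$ required by Definition~\ref{thm:defC1subspaces}. The strategy splits into two stages: first, build a $C^k$ (resp.\ analytic) family of bounded projections $\Pi(t)$ defined near $t_0$ with $\Ker\Pi(t)=\mathcal D^t$; second, convert $\Pi(t)$ into a family of conjugating isomorphisms via the classical Kato transformation formula.

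For the first stage, set $\mathcal N=\Ker F(t_0)$; it is closed, hence complemented in the Hilbert space $\mathfrak H$, and $F(t_0)$ restricts to a continuous bijection $\mathcal N^\perp\to\tilde{\mathfrak H}$, which by the open mapping theorem admits a bounded inverse $G\colon\tilde{\mathfrak H}\to\mathfrak H$ with $F(t_0)\,G=\mathrm{Id}$. Then $t\mapsto F(t)\,G\in\mathcal L(\tilde{\mathfrak H})$ is $C^k$ (resp.\ analytic) and equals $\mathrm{Id}$ at $t_0$; since the invertible operators form an open set on which inversion is analytic, $H(t):=(F(t)\,G)^{-1}$ is defined and $C^k$ (resp.\ analytic) for $t$ near $t_0$. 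Using $F(t)\,G\,H(t)=\mathrm{Id}$ one checks that $\Pi(t):=G\,H(t)\,F(t)$ is idempotent, and since $G$ is injective and $H(t)$ invertible, $\Ker\Pi(t)=\Ker F(t)=\mathcal D^t$; the family $\Pi(t)$ is $C^k$ (resp.\ analytic) because composition of bounded operators is bounded bilinear.

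For the second stage, put $\bar\Pi=\Pi(t_0)$ and
\[
\alpha(t)=\bar\Pi\,\Pi(t)+(\mathrm{Id}-\bar\Pi)(\mathrm{Id}-\Pi(t)),
\]
a polynomial expression in $\Pi(t)$, hence $C^k$ (resp.\ analytic), with $\alpha(t_0)=\mathrm{Id}$, so $\alpha(t)$ is an isomorphism for $t$ near $t_0$. Expanding and using idempotency yields the intertwining relations $\alpha(t)\,\Pi(t)=\bar\Pi\,\alpha(t)$ and $\alpha(t)(\mathrm{Id}-\Pi(t))=(\mathrm{Id}-\bar\Pi)\,\alpha(t)$, so $\alpha(t)$ maps $\mathrm{Ran}\,\Pi(t)$ into $\mathrm{Ran}\,\bar\Pi$ and $\mathcal D^t=\Ker\Pi(t)$ into $\mathcal D^{t_0}=\Ker\bar\Pi$. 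Since $\alpha(t)$ is bijective and $\mathfrak H$ decomposes as the internal direct sum of range and kernel of each projection, matching these two decompositions under the bijection $\alpha(t)$ forces $\alpha(t)(\mathcal D^t)=\mathcal D^{t_0}$ exactly; taking $\overline{\mathcal D}=\mathcal D^{t_0}$ then completes the verification.

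The only step that is not purely formal is the transformation-function argument of the second stage; everything else reduces to the open mapping theorem together with the analyticity of composition and inversion of bounded operators. A minor point worth keeping in mind is that the same algebraic formula for $\alpha(t)$ works verbatim in the $C^0$, $C^k$ and analytic cases, so no case distinction is needed — it is the construction of $G$ (hence of $\Pi(t)$) that genuinely uses surjectivity of $F(t)$.
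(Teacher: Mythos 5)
Your proof is correct. The paper itself offers no argument for this proposition — it simply cites \cite[Lemma~2.9]{asian} — so there is no in-paper proof to compare against. Your two-stage construction is sound: Stage~1 produces a locally defined $C^k$ (resp.\ analytic) family of idempotents $\Pi(t)=G\,H(t)\,F(t)$ with $\Ker\Pi(t)=\Ker F(t)$, where $G$ is the bounded right inverse of $F(t_0)$ obtained by restricting to $\big(\Ker F(t_0)\big)^\perp$ and inverting via the open mapping theorem, and $H(t)=(F(t)G)^{-1}$ uses the openness of the invertibles together with the analyticity of inversion; all compositions involved are bounded-bilinear, so the regularity of $F$ passes to $\Pi$. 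Stage~2 is Kato's transformation-function trick: $\alpha(t)=\bar\Pi\,\Pi(t)+(\mathrm{Id}-\bar\Pi)(\mathrm{Id}-\Pi(t))$ equals $\mathrm{Id}$ at $t_0$, hence is an isomorphism near $t_0$, satisfies the intertwining identities $\alpha(t)\Pi(t)=\bar\Pi\,\alpha(t)$ and $\alpha(t)(\mathrm{Id}-\Pi(t))=(\mathrm{Id}-\bar\Pi)\alpha(t)$, and a bijection carrying each summand of $\mathfrak H=\Ker\Pi(t)\oplus\mathrm{Ran}\,\Pi(t)$ into the corresponding summand of $\mathfrak H=\Ker\bar\Pi\oplus\mathrm{Ran}\,\bar\Pi$ must restrict to a bijection on each, forcing $\alpha(t)(\mathcal D^t)=\mathcal D^{t_0}$. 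Taking $\overline{\mathcal D}=\mathcal D^{t_0}$ verifies Definition~\ref{thm:defC1subspaces} exactly. One cosmetic remark: it is worth stating explicitly that $G$ maps into $\mathcal N^\perp\subset\mathfrak H$ and that its injectivity as a map into $\mathfrak H$ is what makes $\Ker\Pi(t)=\Ker F(t)$ rather than merely $\supset$; you do use this, but it deserves a sentence.
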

\begin{proof}
See for instance \cite[Lemma~2.9]{asian}.
\end{proof}
Clearly, if $\{\mathcal D^t\}_{t\in I}$ is a $C^k$ (resp., an analytic) family of closed subspaces of $\mathfrak H_1$ and
$\mathfrak H_1$ is a closed subspace of $\mathfrak H$, then $\{\mathcal D^t\}_{t\in I}$ is a $C^k$ (resp., an analytic) family
of closed subspaces of $\mathfrak H$. It is also clear that, given a $C^k$ (resp., an analytic) family of closed
subspaces $\{\mathcal D^t\}_{t\in I}$ of $\mathfrak H$ and given a $C^k$ (resp., an analytic) map $t\mapsto\psi_t$ of
isomorphisms of $\mathfrak H$, then $\{\psi_t(\mathcal D^t)\}_{t\in I}$ isa $C^k$ (resp., an analytic) family
of closed subspaces of $\mathfrak H$. We will need later a slight improvement of
Proposition~\ref{thm:produce}:
\begin{cor}\label{thm:imprproduce}
If $\{\mathcal D^t\}_{t\in I}$ is a $C^k$ (resp., an analytic) family of closed subspaces of
$\mathfrak H$, and if $F:I\mapsto\mathcal L(\mathfrak H,\tilde{\mathfrak H})$
is a $C^k$ (resp., an analytic) map such that the restriction of $F(t)$ to $\mathcal D^t$ is
surjective for all $t\in I$, then $\mathcal E_t=\Ker\big(F(t)\big)\cap\mathcal D^t$ is
a $C^k$ (resp., an analytic) family of closed subspaces of $\mathfrak H$.
\end{cor}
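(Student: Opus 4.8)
The plan is to deduce Corollary~\ref{thm:imprproduce} from Proposition~\ref{thm:produce} by a straightforward ``change of ambient Hilbert space'' argument, using the fact already noted in the excerpt that a $C^k$ (resp.\ analytic) family of closed subspaces of a closed subspace of $\mathfrak H$ is also such a family in $\mathfrak H$, and that the property is stable under $C^k$ (resp.\ analytic) curves of isomorphisms of the ambient space.

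First I would work locally: fix $t_0\in I$. By hypothesis there are $\varepsilon>0$, a closed subspace $\overline{\mathcal D}\subset\mathfrak H$ and a $C^k$ (resp.\ analytic) curve $\alpha:\,]t_0-\varepsilon,t_0+\varepsilon[\,\cap I\to\mathcal L(\mathfrak H)$ with $\alpha(t)$ an isomorphism of $\mathfrak H$ and $\alpha(t)(\mathcal D^t)=\overline{\mathcal D}$. Consider the composition $G(t)=F(t)\circ\alpha(t)^{-1}:\,\mathfrak H\to\tilde{\mathfrak H}$. Since inversion is $C^\infty$ (resp.\ analytic) on the open set of isomorphisms in $\mathcal L(\mathfrak H)$, the map $t\mapsto\alpha(t)^{-1}$ is $C^k$ (resp.\ analytic), hence so is $G$. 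Moreover $G(t)$ restricted to the \emph{fixed} subspace $\overline{\mathcal D}$ equals $F(t)$ restricted to $\mathcal D^t$ precomposed with the isomorphism $\alpha(t)^{-1}|_{\overline{\mathcal D}}:\overline{\mathcal D}\to\mathcal D^t$, so $G(t)|_{\overline{\mathcal D}}$ is surjective for all $t$. Thus $H(t):=G(t)|_{\overline{\mathcal D}}\in\mathcal L(\overline{\mathcal D},\tilde{\mathfrak H})$ is a $C^k$ (resp.\ analytic) map of surjective operators on the fixed Hilbert space $\overline{\mathcal D}$, and Proposition~\ref{thm:produce} applies to give that $t\mapsto\Ker\bigl(H(t)\bigr)$ is a $C^k$ (resp.\ analytic) family of closed subspaces of $\overline{\mathcal D}$, hence of $\mathfrak H$.

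Next I would transport back. One checks directly that $\alpha(t)$ maps $\mathcal E_t=\Ker\bigl(F(t)\bigr)\cap\mathcal D^t$ onto $\Ker\bigl(H(t)\bigr)$: indeed for $v\in\mathcal D^t$ we have $\alpha(t)v\in\overline{\mathcal D}$ and $H(t)\bigl(\alpha(t)v\bigr)=G(t)\bigl(\alpha(t)v\bigr)=F(t)v$, so $v\in\mathcal E_t$ iff $\alpha(t)v\in\Ker\bigl(H(t)\bigr)$. Since $\{\Ker(H(t))\}$ is a $C^k$ (resp.\ analytic) family of closed subspaces of $\mathfrak H$ and $t\mapsto\alpha(t)^{-1}$ is a $C^k$ (resp.\ analytic) curve of isomorphisms of $\mathfrak H$, the family $\mathcal E_t=\alpha(t)^{-1}\bigl(\Ker(H(t))\bigr)$ is again $C^k$ (resp.\ analytic) by the remark preceding the statement. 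As $t_0$ was arbitrary, this proves the corollary.

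I do not expect a serious obstacle here; the only points requiring a little care are the smoothness of $t\mapsto\alpha(t)^{-1}$ (standard, from the $C^\infty$/analytic dependence of the inverse) and the bookkeeping that surjectivity of $F(t)|_{\mathcal D^t}$ really does translate into surjectivity of the operator $H(t)$ on the model space $\overline{\mathcal D}$ — which is immediate once one observes $\alpha(t)|_{\mathcal D^t}:\mathcal D^t\to\overline{\mathcal D}$ is an isomorphism. If one prefers to avoid even naming $H(t)$, the same argument can be phrased by applying Proposition~\ref{thm:produce} to the pair $(\overline{\mathcal D},\tilde{\mathfrak H})$ with the map $G(t)|_{\overline{\mathcal D}}$, which is the formulation I would write up.
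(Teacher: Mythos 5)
Your proof is correct and is essentially the same argument as in the paper: after passing to the local trivialization $\alpha$, your operator $H(t)=G(t)|_{\overline{\mathcal D}}=F(t)\circ\bigl(\alpha(t)|_{\mathcal D^t}\bigr)^{-1}$ is exactly the map $\widetilde F(t)$ used there, and both proofs then apply Proposition~\ref{thm:produce} on the fixed space $\overline{\mathcal D}$ and transport the resulting family back by $\alpha(t)^{-1}$.
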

\begin{proof}
Let $\alpha:\left]t_0-\varepsilon,t_0+\varepsilon\right[\cap I\mapsto
\mathcal L(\mathfrak H)$ be a local trivialization of $\{\mathcal D^t\}$,
$\alpha(t)(\mathcal D^t)=\overline{\mathcal D}$ and
consider the $C^k$ (analytic) map $t\mapsto\widetilde F(t):\overline{\mathcal D}\to\widetilde{\mathfrak H}$
given by $\widetilde F(t)=F(t)\circ\left(\alpha(t)\vert_{\mathcal D^t}\right)^{-1}$.
Since the restriction of $F(t)$ to $\mathcal D^t$ is surjective, then $\widetilde F(t)$ is
surjective. By Proposition~\ref{thm:produce}, the family $\Ker\big(\widetilde F(t)\big)=\alpha(t)\big(\mathcal E_t\big)$
is a $C^k$ (resp., an analytic) family of closed subspaces of $\overline{\mathcal D}$. It follows that
$\mathcal E_t=\alpha(t)^{-1}\big[\Ker\big(\widetilde F(t)\big)\big]$ is a $C^k$ (resp., an analytic) family of closed
subspaces of $\mathfrak H$.
\end{proof}
\begin{prop}\label{thm:corcriteriosmoothness}
For all $N\ge1$,  the collection $\big\{\mathcal H_*^\rho(N)\big\}_{\rho\in\mathds S^1}$
is an  analytic family of closed subspaces of $\mathcal H$. If $Y$ is not singular, then
the same conclusion holds also for the family $\big\{\mathcal H_0^\rho(N)\big\}_{\rho\in\mathds S^1}$.
\end{prop}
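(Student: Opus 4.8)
The plan is to exhibit each of the two families as the kernel of an explicit analytic family of surjective bounded operators on a suitable closed subspace of $\mathcal H$, and then invoke Corollary~\ref{thm:imprproduce}. First I would treat $\big\{\mathcal H^\rho(N)\big\}_{\rho\in\mathds S^1}$: the map $\mathcal H\ni V\mapsto T^NV(1)-\rho^NV(0)\in\C^n$ is bounded, depends analytically on $\rho\in\mathds S^1$ (the entries of $\rho\mapsto\rho^N$ being polynomials, hence restrictions of entire functions), and is surjective for every $\rho$ because the evaluation $V\mapsto V(1)$ already maps $\mathcal H$ onto $\C^n$ and $T^N$ is an isomorphism. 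Hence by Proposition~\ref{thm:produce} the collection $\big\{\mathcal H^\rho(N)\big\}_{\rho\in\mathds S^1}$ is an analytic family of closed subspaces of $\mathcal H$; this will play the role of $\mathcal D^t$ in Corollary~\ref{thm:imprproduce}.

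Next I would realize $\mathcal H_*^\rho(N)$ and $\mathcal H_0^\rho(N)$ inside $\mathcal H^\rho(N)$ by a further linear constraint. Define $G_N:\mathcal H\to L^2\big([0,1],\C\big)$ by $G_N(V)=g(V',Y_N)-g(V,Y_N')$; this is a bounded (indeed constant-in-$\rho$) map. For the family $\mathcal H_0^\rho(N)$ one takes $\mathcal E^\rho=\Ker(G_N)\cap\mathcal H^\rho(N)$ directly; for $\mathcal H_*^\rho(N)$ one composes $G_N$ with the bounded quotient projection $L^2\to L^2/\C\cdot\mathds 1$ onto functions modulo constants, i.e. one uses $\widetilde G_N(V)=G_N(V)-\int_0^1 G_N(V)\,\de t$ followed by $G_N$, or more simply works with the single scalar-plus-$L^2$ target that records ``$G_N(V)$ minus its mean''. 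In either case the constraint operator is $\rho$-independent, hence trivially analytic in $\rho$, so the only thing to check before applying Corollary~\ref{thm:imprproduce} is that its restriction to $\mathcal H^\rho(N)$ is surjective onto the relevant target for every $\rho\in\mathds S^1$.

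The surjectivity is the main technical point, and here the singular/non-singular dichotomy enters. For the $\mathcal H_*$-family one must show: given any $c\in\C$, there is $V\in\mathcal H^\rho(N)$ with $g(V',Y_N)-g(V,Y_N')\equiv c$. Writing $V=f\cdot Y_N$ with $f$ a scalar $H^1$ function, one computes $g(V',Y_N)-g(V,Y_N')=f'\,g(Y_N,Y_N)+f\big(g(Y_N',Y_N)-g(Y_N,Y_N')\big)=f'\,g(Y_N,Y_N)$, where the last bracket vanishes by skew-symmetry of the Wronskian-type expression in its two slots. Since $g(Y_N,Y_N)<0$ is bounded away from $0$ on $[0,1]$ by (e1), the ODE $f'=c/g(Y_N,Y_N)$ is solvable in $H^1$; choosing the integration constant appropriately, one can moreover arrange the boundary condition $T^NV(1)=\rho^NV(0)$, because $Y_N$ itself satisfies $T^NY_N(1)=Y_N(0)$ (from (e2) and the periodization \eqref{eq:2.1b}), so the condition on $f\cdot Y_N$ reduces to a single scalar linear equation on $\big(f(0),f(1)\big)$ that can always be met after shifting $f$ by a constant — and shifting $f$ by a constant does not change $f'$, hence does not disturb the value $c$. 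This proves that $\widetilde G_N$, restricted to $\mathcal H^\rho(N)$, hits every constant, i.e. is surjective onto the ``mean part''; the ``$L^2$-mod-constants part'' is hit using vector fields transverse to $Y_N$ exactly as in the proof of Proposition~\ref{thm:ker*}. Applying Corollary~\ref{thm:imprproduce} with $\mathcal D^\rho=\mathcal H^\rho(N)$ then gives that $\big\{\mathcal H_*^\rho(N)\big\}_\rho$ is an analytic family.

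For the $\mathcal H_0$-family the same argument shows $G_N\colon\mathcal H^\rho(N)\to L^2$ is surjective \emph{provided} one can prescribe $G_N(V)$ to be an arbitrary $L^2$ function while respecting the boundary condition; the obstruction is precisely the constant that $G_N(V)$ is forced to carry along $Y_N$-directions, and this is controllable exactly when $Y$ is not singular — for then by \eqref{eq:defYsingular} the vector $Y_N'$ is not pointwise proportional to $Y_N$, so one has genuine freedom in the transverse directions to adjust both the $L^2$-profile of $G_N(V)$ and the boundary data independently. If $Y$ is singular this freedom collapses at the ``constant'' level, which is exactly why the statement excludes that case and why, as remarked in the introduction, semicontinuity of $\Lambda_\gamma$ can fail at $1$. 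The hard part of the write-up is therefore the verification of surjectivity of the restricted constraint map in the non-singular case: one must show that the rank of $V\mapsto G_N(V)$ on $\mathcal H^\rho(N)$ is all of $L^2$ uniformly in $\rho$, which amounts to an explicit solvability-with-boundary-conditions argument for a first-order scalar ODE with the inhomogeneity living in the $Y_N$-transverse part of the index form's splitting.
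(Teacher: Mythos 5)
Your strategy is a genuine alternative to the paper's: you fix the analytic family $\{\mathcal H^\rho(N)\}_{\rho}$ as the ambient $\mathcal D^\rho$ in Corollary~\ref{thm:imprproduce} and take the ($\rho$-independent) constraint operator as the map whose kernel is being intersected. The paper swaps these roles: it fixes the single closed subspace $\mathcal H_*(N)$ (resp.\ $\mathcal H_0(N)$) as the ambient $\mathcal D$ and lets $F_\rho(V)=T^NV(1)-\rho^NV(0)$ be the $\rho$-varying map; the payoff is that the surjectivity to be checked is onto the finite-dimensional space $\C^n$, and follows by adding to an arbitrary $W\in\mathcal H$ a scalar multiple $f_W\cdot Y_N$ with $f_W(0)=f_W(1)=0$ so as to land in $\mathcal H_*(N)$ without changing the $F_\rho$-value (and, for $\mathcal H_0(N)$, a modification supported where $Y_N'$ is not pointwise proportional to $Y_N$, which is where non-singularity enters). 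Your route instead pushes the surjectivity check into the infinite-dimensional targets $L^2$ and $L^2/\C$, and that is where the proposal breaks down.

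The surjectivity condition you state for the $\mathcal H_*$ case --- ``given any $c\in\C$, there is $V\in\mathcal H^\rho(N)$ with $G_N(V)\equiv c$'' --- is not the one Corollary~\ref{thm:imprproduce} requires in your setup. Since you realize $\mathcal H_*^\rho(N)=\Ker(\widetilde G_N)\cap\mathcal H^\rho(N)$ with $\widetilde G_N(V)=G_N(V)-\int_0^1 G_N(V)\,\mathrm dt$, what you must show is that $\widetilde G_N\big|_{\mathcal H^\rho(N)}$ is surjective onto the mean-zero subspace of $L^2$, i.e.\ that every mean-zero $\psi\in L^2$ is of the form $\widetilde G_N(V)$ for some $V\in\mathcal H^\rho(N)$. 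Producing $V$ with $G_N(V)\equiv c$ yields $\widetilde G_N(V)=0$, so it only shows that $\widetilde G_N$ hits zero, which is vacuous; the follow-up claim that this means ``$\widetilde G_N$, restricted to $\mathcal H^\rho(N)$, hits every constant, i.e.\ is surjective onto the mean part'' is false by construction, since $\widetilde G_N$ has mean-zero values. Likewise, for $\mathcal H_0$ you need $G_N\big|_{\mathcal H^\rho(N)}$ surjective onto all of $L^2$ --- a solvability statement for the scalar first-order equation $g(V',Y_N)-g(V,Y_N')=\phi$ under the quasi-periodic boundary condition, where non-singularity of $Y$ is decisive exactly when $\rho^N=1$; you flag this as ``the hard part of the write-up'' and defer it to Proposition~\ref{thm:ker*}, whose proof gives an $I_N$-orthogonal splitting but not $L^2$-surjectivity of $G_N$. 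The required surjectivities can in fact be proved (write $V=fY_N+V_\perp$ with $g(V_\perp,Y_N)\equiv0$, solve $f'g(Y_N,Y_N)-2g(V_\perp,Y_N')=\phi$, and use the freedom in the integration constant of $f$, or in a $V_\perp$ supported on an interval where $Y_N'$ is not proportional to $Y_N$, to meet the boundary condition), but that argument is absent from your proposal, so as written there is a genuine gap.
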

\begin{proof}
Consider the analytic map $\mathds S^1\ni\rho\mapsto F_\rho$, $F_\rho:\mathcal H\to\C^n$ given by
$F_\rho(V)=T^NV(1)-\rho^NV(0)$; in order to apply Proposition~\ref{thm:produce} we need to show
that the restriction of $F_\rho$ to $\mathcal H_*^\rho(N)$
is surjective for all $\rho\in\mathds S^1$. Clearly, $F_\rho:\mathcal H\to\C^n$ is surjective.
Given $W\in\mathcal H$, there exists $V\in\mathcal H_*^\rho(N)$ with $V(0)=W(0)$ and
$V(1)=W(1)$. Such $V$ is obtained by setting $V(s)=W(s)+f_W(s)\cdot Y_N(s)$, $s\in[0,1]$,
where \[f_W(s)=\int_0^s\frac{C+g(W,Y_N')-g(W',Y_N)}{g(Y_N,Y_N)}\,\mathrm dr\]
and
\begin{equation}\label{eq:defCWcompl}
C=\left(\int_0^1\frac{\mathrm dr}{g(Y_N,Y_N)}\right)^{-1}\int_0^1\frac{g(W',Y_N)-g(W,Y_N')}{g(Y_N,Y_N)}\,\mathrm dr.
\end{equation}
It is easily seen that such $V$ satisfies $g(V',Y_N)-g(V,Y_N')\equiv C$, the obvious details of such computation being omitted.
Since in this situation $F_\rho(V)=F_\rho(W)$, it follows immediately that also the restriction of $F_\rho$ to
$\mathcal H_*^\rho(N)$ is surjective.

Consider now the case of the family $\mathcal H_0^\rho(N)$; the non singularity assumption on $Y$
implies that we can find a subinterval $[a,b]\subset\left]0,1\right[$ such that
\begin{equation}\label{eq:equivY'multY}
\mathbf T_N:=\left[\frac{Y_N}{g(Y_N,Y_N)}\right]'+\frac{Y_N'}{g(Y_N,Y_N)}\ne0\quad\text{on $[a,b]$}.
\end{equation}
In this case, we show that the restriction of $F_\rho$ to $\mathcal H_0^\rho(N)$ is surjective by showing
that for all $Z_0,Z_1\in\C^n$  there exists $W\in\mathcal H$ with $W(0)=Z_0$, $W(1)=Z_1$ and such that
the quantity $C$ given in \eqref{eq:defCWcompl} vanishes.
For, choose arbitrary smooth  maps
$t_1:[0,a]\to\C^n$ and $t_2:[b,1]\to\C^n$ such that $t_1(0)=Z_0$, $t_2(1)=Z_1$ and $t_1(a)=t_2(b)=0$.
The desired $W$ is then obtained by setting:
\[W(s)=\begin{cases}t_1(s)&\text{if $s\in\left[0,a\right[$,}\\
h(s)&\text{if $s\in[a,b]$,}\\
t_2(s)&\text{if $s\in\left]b,1\right]$,}\end{cases}\]
where $h\in H^1_0\big([a,b];\C^n\big)$ is to be chosen in such a way that
\begin{multline}\label{eq:equazh}\int_a^b
\frac{g(h',Y_N)-g(h,Y_N')}{g(Y_N,Y_N)}\,\mathrm dr=\\
-\int_0^a\frac{g(t_1',Y_N)-g(t_1,Y_N')}{g(Y_N,Y_N)}\,\mathrm dr-\int_b^1\frac{g(t_2',Y_N)-g(t_2,Y_N')}{g(Y_N,Y_N)}\,\mathrm dr.
\end{multline}
The left hand side of this equality defines a bounded linear functional on $H^1_0\big([a,b],\C^n\big)$ which is not null;
this is easily seen using partial integration:
\[\int_a^b
\frac{g(h',Y_N)-g(h,Y_N')}{g(Y_N,Y_N)}\,\mathrm dr=-\int_a^bg\big(h,\mathbf T_N\big)\,\mathrm dr,\]
and using our assumption \eqref{eq:equivY'multY}. In particular, a function $h\in H^1_0\big([a,b],\C^n\big)$
satisfying \eqref{eq:equazh} can be found, which concludes the argument.
\end{proof}

\subsection{Singular solutions}
Let us now assume that $Y$ is a singular solution of the Morse--Sturm system \eqref{eq:defMorseSturm},
which is equivalent to assuming that the maps $\mathbf T_N$ defined in \eqref{eq:equivY'multY} vanish
identically on $[0,1]$ for all $N\ge1$.
\begin{lem}\label{thm:spazicoincsing}
If $Y$ is a singular solution of \eqref{eq:defMorseSturm} and $\rho\in\mathds S^1$ is  an $N$-th root
of unity, then  $\mathcal H_0^\rho(N)=\mathcal H_*^\rho(N)$.
\end{lem}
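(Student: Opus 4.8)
The plan is to show both inclusions, the nontrivial one being $\mathcal H_*^\rho(N)\subset\mathcal H_0^\rho(N)$, since the reverse inclusion is immediate from the definitions $\mathcal H_0^\rho(N)=\mathcal H_0(N)\cap\mathcal H^\rho(N)$ and $\mathcal H_*^\rho(N)=\mathcal H_*(N)\cap\mathcal H^\rho(N)$ together with $\mathcal H_0(N)\subset\mathcal H_*(N)$. So take $V\in\mathcal H_*^\rho(N)$; by definition there is a constant $C_V$ with $g(V',Y_N)-g(V,Y_N')=C_V$ a.e.\ on $[0,1]$, and I must show $C_V=0$. The natural first step is to write $C_V$ as an integral: dividing by $g(Y_N,Y_N)$ (which is nonzero everywhere, since $Y$ is timelike, so $Y_N$ is too) and integrating over $[0,1]$ gives
\[
C_V\int_0^1\frac{\mathrm dr}{g(Y_N,Y_N)}=\int_0^1\frac{g(V',Y_N)-g(V,Y_N')}{g(Y_N,Y_N)}\,\mathrm dr.
\]
This is exactly the right-hand side of \eqref{eq:defCWcompl} with $W=V$, so it suffices to show this integral vanishes.

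The key computation is a partial integration exploiting singularity. By \eqref{eq:defYsingular} applied to $Y_N$ (which is singular because $Y$ is, as noted after \eqref{eq:iteratedMS}), we have the identity $\mathbf T_N=\big[Y_N/g(Y_N,Y_N)\big]'+Y_N'/g(Y_N,Y_N)\equiv 0$ on $[0,1]$. Now integrate by parts exactly as in the proof of Proposition~\ref{thm:corcriteriosmoothness}:
\[
\int_0^1\frac{g(V',Y_N)-g(V,Y_N')}{g(Y_N,Y_N)}\,\mathrm dr
=\Big[\tfrac{g(V,Y_N)}{g(Y_N,Y_N)}\Big]_0^1-\int_0^1 g\big(V,\mathbf T_N\big)\,\mathrm dr
=\Big[\tfrac{g(V,Y_N)}{g(Y_N,Y_N)}\Big]_0^1,
\]
the integral term dropping out by singularity. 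So $C_V\int_0^1\frac{\mathrm dr}{g(Y_N,Y_N)}$ equals the boundary term $\frac{g(V(1),Y_N(1))}{g(Y_N(1),Y_N(1))}-\frac{g(V(0),Y_N(0))}{g(Y_N(0),Y_N(0))}$.

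The final step is to see this boundary term vanishes using the hypothesis that $\rho$ is an $N$-th root of unity together with $V\in\mathcal H^\rho(N)$, i.e.\ $T^NV(1)=\rho^NV(0)=V(0)$. Since $T$ is $g$-preserving and $Y_N(0)=T^NY_N(1)$ by the periodicity \eqref{eq:periodYNeRN} (with $k=N$, noting $Y_N(1)=Y_N(0+1)=T^{-N}Y_N(0)$), one gets $g(V(0),Y_N(0))=g(T^NV(1),T^NY_N(1))=g(V(1),Y_N(1))$ and likewise $g(Y_N(0),Y_N(0))=g(Y_N(1),Y_N(1))$; hence the two boundary fractions are equal and their difference is $0$. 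Since $\int_0^1\frac{\mathrm dr}{g(Y_N,Y_N)}\neq 0$ (the integrand has constant sign), we conclude $C_V=0$, so $V\in\mathcal H_0(N)$, hence $V\in\mathcal H_0^\rho(N)$. The main (minor) obstacle is bookkeeping the $g$-preservation of $T$ and the periodicity relations \eqref{eq:periodYNeRN} correctly to kill the boundary term; everything else is routine once the partial-integration identity of Proposition~\ref{thm:corcriteriosmoothness} is invoked.
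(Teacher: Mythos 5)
Your proof is correct and follows essentially the same route as the paper: express $C_V$ (up to the nonzero factor $\int_0^1\tfrac{\mathrm dr}{g(Y_N,Y_N)}$) as an integral, integrate by parts, drop the $\int_0^1 g(V,\mathbf T_N)\,\mathrm dr$ term by singularity, and then kill the boundary term using $\rho^N=1$ together with the $g$-preservation of $T$ and the periodicity $Y_N(0)=T^NY_N(1)$. The only cosmetic difference is that the paper writes the boundary term in the form $(\rho^N-1)\,g\big(V(0),Y_N(0)\big)/g\big(Y_N(0),Y_N(0)\big)$ before invoking $\rho^N=1$, whereas you substitute $\rho^N=1$ directly; the content is identical.
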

\begin{proof} If $V\in \mathcal H_*^\rho(N)$, a direct computation gives:
\begin{multline}\label{eq:acosaeugualeCv}
C_V\int_0^1\frac{\mathrm dr}{g(Y_N,Y_N)}=\int_0^1\frac{g(V',Y_N)-g(V,Y_N')}{g(Y_N,Y_N)}\,\mathrm dr\\
=\frac{g(V,Y_N)}{g(Y_N,Y_N)}\Big\vert_0^1-\int_0^1g(V,\mathbf T_N)\,\mathrm dr=\frac{g(V,Y_N)}{g(Y_N,Y_N)}\Big\vert_0^1=(\rho^N-1)\frac{g\big(V(0),Y_N(0)\big)}{g\big(Y_N(0),Y_N(0)\big)},
\end{multline}
from which the conclusion follows easily.
\end{proof}
Let us show now that the $N$-th roots of unity are the unique discontinuities of the family $\big\{\mathcal H_0^\rho(N)\big\}$.
\begin{prop}\label{thm:hoanalytic}
If $Y$ is a singular solution of \eqref{eq:defMorseSturm} and $\mathcal A\subset\mathds S^1$ is a connected
subset that does not contain any $N$-th root of unity, the family $\big\{\mathcal H_0^\rho(N)\big\}_{\rho\in\mathcal A}$
is an analytic family of closed subspaces of $\mathcal H$.
\end{prop}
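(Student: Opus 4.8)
The plan is to realize $\mathcal H_0^\rho(N)$, for $\rho$ ranging over $\mathcal A$, as the intersection of the analytic family $\big\{\mathcal H_*^\rho(N)\big\}_{\rho\in\mathds S^1}$ — whose analyticity is exactly Proposition~\ref{thm:corcriteriosmoothness} — with the kernel of a single, $\rho$-independent bounded functional, and then to invoke the circle-parametrized version of Corollary~\ref{thm:imprproduce}. In this way the whole argument reduces to two points: identifying the functional, and checking that its restriction to $\mathcal H_*^\rho(N)$ is surjective.

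\textbf{The identification.} I would first observe that if $\rho\in\mathds S^1$ is \emph{not} an $N$-th root of unity, then
\[\mathcal H_0^\rho(N)=\big\{V\in\mathcal H_*^\rho(N): g\big(V(0),Y_N(0)\big)=0\big\}.\]
Indeed $\mathcal H_0^\rho(N)\subset\mathcal H_*^\rho(N)$, and a field $V\in\mathcal H_*^\rho(N)$ lies in $\mathcal H_0^\rho(N)$ precisely when its associated constant $C_V$ vanishes; but, $Y$ being singular, the computation \eqref{eq:acosaeugualeCv} from the proof of Lemma~\ref{thm:spazicoincsing} gives, for $V\in\mathcal H_*^\rho(N)$,
\[C_V\int_0^1\frac{\mathrm dr}{g(Y_N,Y_N)}=(\rho^N-1)\,\frac{g\big(V(0),Y_N(0)\big)}{g\big(Y_N(0),Y_N(0)\big)},\]
so, since $\rho^N\ne1$ and $g(Y_N(0),Y_N(0))<0$, the condition $C_V=0$ is equivalent to $g(V(0),Y_N(0))=0$. (On an $N$-th root of unity this breaks down — there $\mathcal H_0^\rho(N)=\mathcal H_*^\rho(N)$ by Lemma~\ref{thm:spazicoincsing} — which is precisely the discontinuity that the hypothesis on $\mathcal A$ excludes.) Now let $G\colon\mathcal H\to\C$ be the bounded linear functional $G(V)=g\big(V(0),Y_N(0)\big)$; it does not depend on $\rho$, hence is trivially an analytic map $\mathcal A\to\mathcal L(\mathcal H,\C)$, and on $\mathcal A$ we have $\mathcal H_0^\rho(N)=\mathcal H_*^\rho(N)\cap\Ker G$.

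\textbf{Surjectivity of $G\vert_{\mathcal H_*^\rho(N)}$ and conclusion.} It remains to produce, for each $\rho\in\mathcal A$, one field in $\mathcal H_*^\rho(N)$ on which $G$ does not vanish. I would take $V=\phi\,Y_N$ with $\phi\colon[0,1]\to\C$ given by $\phi(s)=1+c\int_0^s\frac{\mathrm dr}{g(Y_N,Y_N)}$, where $c\in\C$ is chosen so that $\phi(1)=\rho^N$ — possible because $\int_0^1\frac{\mathrm dr}{g(Y_N,Y_N)}$ is a nonzero real number. Then $\phi$ is $C^1$, so $V\in\mathcal H$; since $Y_N$ is real-valued one has $g(Y_N',Y_N)-g(Y_N,Y_N')=0$, hence $g(V',Y_N)-g(V,Y_N')=\phi'\,g(Y_N,Y_N)\equiv c$, so $V\in\mathcal H_*(N)$; using $T^NY_N(1)=Y_N(0)$ we get $T^NV(1)=\phi(1)\,Y_N(0)=\rho^N\phi(0)\,Y_N(0)=\rho^NV(0)$, so $V\in\mathcal H^\rho(N)$ and therefore $V\in\mathcal H_*^\rho(N)$; finally $G(V)=\phi(0)\,g(Y_N(0),Y_N(0))=g(Y_N(0),Y_N(0))\ne0$. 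Thus $G\vert_{\mathcal H_*^\rho(N)}\colon\mathcal H_*^\rho(N)\to\C$ is onto for every $\rho\in\mathcal A$, and Corollary~\ref{thm:imprproduce}, applied to the analytic family $\big\{\mathcal H_*^\rho(N)\big\}_{\rho\in\mathcal A}$ and the analytic map $\rho\mapsto G$, shows that $\big\{\mathcal H_0^\rho(N)\big\}_{\rho\in\mathcal A}=\big\{\mathcal H_*^\rho(N)\cap\Ker G\big\}_{\rho\in\mathcal A}$ is an analytic family of closed subspaces of $\mathcal H$. Connectedness of $\mathcal A$ only serves to make it an arc; since analyticity of a family is a local notion, all that is really used is that $\mathcal A$ misses the $N$-th roots of unity.

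\textbf{Main obstacle.} The step carrying the real content is the first one: recognizing that, in the singular case, $\mathcal H_0^\rho(N)$ has codimension exactly one inside $\mathcal H_*^\rho(N)$ and is cut out there by the $\rho$-\emph{independent} scalar condition $g(V(0),Y_N(0))=0$. This is what lets one bypass the interval-splitting construction used for non-singular $Y$ in Proposition~\ref{thm:corcriteriosmoothness}, which is genuinely unavailable here because $\mathbf T_N$ vanishes identically.
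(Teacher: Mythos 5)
Your proof is correct and takes essentially the same route as the paper: both apply Corollary~\ref{thm:imprproduce} to the analytic family $\mathcal H_*^\rho(N)$ together with a $\rho$-independent scalar functional, relying on the identity \eqref{eq:acosaeugualeCv} and $\rho^N\ne1$ to establish surjectivity. The only cosmetic difference is that the paper uses the functional $F_N$ computing $C_V$ directly, whereas you use $G(V)=g\big(V(0),Y_N(0)\big)$, which \eqref{eq:acosaeugualeCv} shows to be proportional to $C_V$ on $\mathcal H_*^\rho(N)$.
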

\begin{proof}
We use Corollary~\ref{thm:imprproduce} applied to the analytic family $\mathcal H^\rho_*(N)$
and the constant map $F(t)=F_N:\mathcal H\to\C$ defined by:
\[F_N(V)=\left(\int_0^1\frac{\mathrm dr}{g(Y_N,Y_N)}\right)^{-1}\int_0^1\frac{g(V',Y_N)-g(V,Y_N')}{g(Y_N,Y_N)}\,\mathrm dr.\]
The restriction of $F_N$ to $\mathcal H^\rho_*(N)$ is the map $V\mapsto C_V$;
by \eqref{eq:acosaeugualeCv}, such restriction is surjective (i.e., not identically zero)
when $\rho$ is not an $N$-th root of unity. Observe indeed that, as it follows easily arguing as in the proof
of Proposition~\ref{thm:corcriteriosmoothness}, $V(0)$ is an arbitrary vector of $\C^n$ when $V$ varies in
$\mathcal H^\rho_*(N)$.
\end{proof}
In particular, we have the following:
\begin{cor}\label{thm:cor2.9}
If $Y$ is a singular solution of \eqref{eq:defMorseSturm}, then the collection $\{\mathcal H_0^\rho(1)\}_{\rho\in\mathds S^1\setminus\{1\}}$
is an analytic family of closed subspaces of $\mathcal H$.\qed
\end{cor}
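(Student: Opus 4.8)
The plan is to deduce this statement as the special case $N=1$ of Proposition~\ref{thm:hoanalytic}, so the only work is to check that the hypotheses of that proposition are satisfied. First I would note that $Y$ is assumed singular throughout, which is exactly the standing hypothesis of Proposition~\ref{thm:hoanalytic}. Next, for $N=1$ the set of $N$-th roots of unity is the singleton $\{1\}$; hence the set $\mathcal A:=\mathds S^1\setminus\{1\}$ contains no first root of unity, as required.

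It then remains to observe that $\mathcal A=\mathds S^1\setminus\{1\}$ is connected: removing a single point from the circle yields an open arc, which is (path-)connected. Consequently $\mathcal A$ is a connected subset of $\mathds S^1$ containing no $N$-th root of unity for $N=1$, and Proposition~\ref{thm:hoanalytic} applies verbatim to give that $\big\{\mathcal H_0^\rho(1)\big\}_{\rho\in\mathds S^1\setminus\{1\}}$ is an analytic family of closed subspaces of $\mathcal H$. Here the notion of analytic family over the arc $\mathds S^1\setminus\{1\}$ is the obvious generalization of Definition~\ref{thm:defC1subspaces} already mentioned after that definition; since analyticity of a family of subspaces is a purely local requirement, restriction to the arc causes no difficulty.

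There is essentially no obstacle in this argument; the only point that needs a word is the connectedness of the punctured circle, which is elementary. If one preferred a self-contained argument, one could instead unwind the proof of Proposition~\ref{thm:hoanalytic} directly in the case $N=1$: apply Corollary~\ref{thm:imprproduce} to the analytic family $\big\{\mathcal H_*^\rho(1)\big\}_{\rho\in\mathds S^1}$ (analytic by Proposition~\ref{thm:corcriteriosmoothness}) together with the constant bounded functional $F_1:\mathcal H\to\C$, $F_1(V)=\left(\int_0^1\frac{\mathrm dr}{g(Y_1,Y_1)}\right)^{-1}\int_0^1\frac{g(V',Y_1)-g(V,Y_1')}{g(Y_1,Y_1)}\,\mathrm dr$, whose restriction to $\mathcal H_*^\rho(1)$ equals $V\mapsto C_V$ and is surjective for every $\rho\neq1$ by \eqref{eq:acosaeugualeCv}, since $V(0)$ ranges over all of $\C^n$ as $V$ varies in $\mathcal H_*^\rho(1)$.
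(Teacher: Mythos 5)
Your proposal is correct and matches the paper's (implicit) argument exactly: the corollary is stated as an immediate consequence of Proposition~\ref{thm:hoanalytic} applied with $N=1$ and $\mathcal A=\mathds S^1\setminus\{1\}$, which is connected and contains no first root of unity. Your verification of these two small points is exactly what the paper leaves to the reader.
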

\subsection{Finiteness of the index}
\begin{prop}\label{thm:prop2.10}
For all $N\ge1$ and for all $\rho\in\mathds S^1$, the restriction of $I_N$ to $\mathcal H_*^\rho(N)\times\mathcal H_*^\rho(N)$
is \emph{essentially positive}, i.e., it is represented (relatively to the inner product \eqref{eq:defIP})
by a self-adjoint operator on $\mathcal H_*^\rho(N)$ which is a compact perturbation of a positive isomorphism.
In particular, the index of $I_N$ on $\mathcal H_*^\rho(N)$ is finite.
\end{prop}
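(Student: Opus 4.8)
The plan is to prove the more explicit statement that, with respect to the Hilbert inner product $\llangle\cdot,\cdot\rrangle_N$ of \eqref{eq:defIP}, the restriction of $I_N$ to $\mathcal H_*^\rho(N)$ is represented by an operator of the form $\mathrm{Id}+K_N$, where $\mathrm{Id}$ is the identity of $\mathcal H_*^\rho(N)$ and $K_N$ is compact and self-adjoint. Finiteness of the index is then immediate: the nonzero spectrum of a compact self-adjoint operator consists of eigenvalues of finite multiplicity accumulating only at $0$, so only finitely many eigenvalues of $\mathrm{Id}+K_N$ are negative, and the index of $I_N$ on $\mathcal H_*^\rho(N)$ is their total multiplicity. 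Throughout I would use that $g_t^N$ is a continuous family of positive definite Hermitian forms on the compact interval $[0,1]$, hence uniformly bounded above and below, so that $\llangle\cdot,\cdot\rrangle_N$ is equivalent to the standard $H^1$ inner product; in particular any sesquilinear form on $\mathcal H$ that is continuous for the $L^2$-topology is, by Rellich's theorem, represented by a compact operator, and this remains true after restriction to the closed subspace $\mathcal H_*^\rho(N)$ (compose with the $\llangle\cdot,\cdot\rrangle_N$-orthogonal projection).

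First I would dispose of the zeroth order term: since $R_N$ is continuous on $[0,1]$, the form $(V,W)\mapsto N^2\int_0^1 g(R_NV,W)\,\mathrm dt$ is $L^2$-continuous, hence a compact perturbation. Next, I would use the timelike field $Y$ to trade the indefinite form $g$ for $g_t^N$: from the definition of $g_t^N$ and $g(Y_N,Y_N)<0$ one has the pointwise identity
\[
g(V',V')=g_t^{N}(V',V')-2\,\frac{\big|g(V',Y_N)\big|^2}{\big|g(Y_N,Y_N)\big|},
\]
and, integrating, $\int_0^1 g_t^N(V',V')\,\mathrm dt=\llangle V,V\rrangle_N-\int_0^1 g_t^N(V,V)\,\mathrm dt$, the last term being $L^2$-continuous and therefore again a compact perturbation. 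Thus everything reduces to showing that the single term $V\mapsto\int_0^1\frac{|g(V',Y_N)|^2}{|g(Y_N,Y_N)|}\,\mathrm dt$, restricted to $\mathcal H_*^\rho(N)$, is represented by a compact operator.

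This last step is where the definition of $\mathcal H_*$ is essential, and it is the only real obstacle. For $V\in\mathcal H_*^\rho(N)$ one has $g(V',Y_N)=C_V+g(V,Y_N')$ a.e.\ on $[0,1]$, with $C_V$ given by \eqref{eq:defCWcompl} (the denominator $\int_0^1 g(Y_N,Y_N)^{-1}\,\mathrm dr$ being nonzero since the integrand is everywhere negative); in particular $V\mapsto C_V$ is a bounded linear functional on $\mathcal H$, and the identification of $C_V$ with the value of \eqref{eq:defCWcompl} is obtained simply by dividing the constraint by $g(Y_N,Y_N)$ and integrating. Expanding the square,
\[
\frac{\big|g(V',Y_N)\big|^2}{\big|g(Y_N,Y_N)\big|}=\frac{|C_V|^2}{\big|g(Y_N,Y_N)\big|}+2\,\frac{\mathrm{Re}\big(C_V\,\overline{g(V,Y_N')}\big)}{\big|g(Y_N,Y_N)\big|}+\frac{\big|g(V,Y_N')\big|^2}{\big|g(Y_N,Y_N)\big|},
\]
after integration the first summand is the quadratic form $|C_V|^2\cdot\big(\int_0^1|g(Y_N,Y_N)|^{-1}\,\mathrm dt\big)$, built from the bounded functional $V\mapsto C_V$ and hence of finite rank; the middle summand is likewise of finite rank, being obtained from $C_V$ and the bounded functional $V\mapsto\int_0^1|g(Y_N,Y_N)|^{-1}\overline{g(V,Y_N')}\,\mathrm dt$; and the third summand is bounded by a constant times $\|V\|_{L^2}^2$, hence $L^2$-continuous. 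In all three cases the associated operator on $\mathcal H_*^\rho(N)$ is compact, and collecting the above we get $I_N(V,V)=\llangle V,V\rrangle_N+\llangle K_NV,V\rrangle_N$ on $\mathcal H_*^\rho(N)$ with $K_N$ compact self-adjoint, as claimed.

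The crux, as indicated, is the passage from $g(V',Y_N)$ to $C_V+g(V,Y_N')$: the constraint defining $\mathcal H_*(N)$ converts the genuinely first-order, indefinite quantity $g(V',Y_N)$ into a finite-rank term plus a quantity involving no derivative of $V$, on which Rellich's theorem bites. Without this constraint $I_N$ is strongly indefinite on $\mathcal H^\rho(N)$, which is precisely why the restriction to $\mathcal H_*^\rho(N)$ cannot be dispensed with.
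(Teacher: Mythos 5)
Your proof is correct and takes essentially the same route as the paper: both decompose $I_N$ as $\llangle\cdot,\cdot\rrangle_N$ plus a compact perturbation, and both identify the constraint $g(V',Y_N)=C_V+g(V,Y_N')$ on $\mathcal H_*^\rho(N)$ as the crucial device that converts the indefinite first-order term into something of zeroth order plus a finite-rank correction. The only cosmetic difference is that the paper packages the entire correction as a single bilinear form continuous for the $C^0$-topology (using the compactness of $H^1\hookrightarrow C^0$ and the representation $C_V=(\rho^N-1)g(V(0),Y_N(0))-2\int_0^1 g(V,Y_N')\,\mathrm dt$), whereas you split it into $L^2$-continuous pieces plus finite-rank pieces via Rellich; both are valid instantiations of the same idea.
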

\begin{proof}
We will show that the restriction of $I_N$ to $\mathcal H_*^\rho(N)$ is the sum of the inner product
$\llangle\cdot,\cdot\rrangle_N$ and a symmetric bilinear form $B$ which is continuous relatively
to the $C^0$-topology. The conclusion will follow from the fact that the inclusion of $H^1$ into
$C^0$ is compact, and therefore $B$ is represented by a compact operator.

The linear map $\mathcal H^\rho_*(N)\ni V\mapsto C_V\in\mathds C$ is continuous relatively to the $C^0$-topology, for:
\[C_V=(\rho^N-1)g\big(V(0),Y_N(0)\big)-2\int_0^1g(V,Y_N')\,\mathrm dt.\]
A straightforward calculation shows that, for $V,W\in\mathcal H_*^\rho(N)$, $I_N(V,W)$ can be
written as:
\[I_N(V,W)=\llangle V,W\rrangle_N+\int_0^1\Big[2\,\frac{[g(V,Y_N')+C_V]\cdot[g(W,Y_N')+C_W]}{g(Y_N,Y_N)}-g_t^N(V,W)\Big]\,\mathrm dt,\]
from which the conclusion follows easily.
\end{proof}
\begin{cor}
\label{thm:contindex}
Let $\mathcal A\subset\mathds S^1$ be a connected subset such that the restriction of $I_N$ to $\mathcal H^\rho_*(N)$
is nondegenerate for all $\rho\in\mathcal A$. Then, the index of such restriction
is constant on $\mathcal A$. Similarly, if $Y$ is not a singular solution of \eqref{eq:defMorseSturm},
the same result holds for the restriction of $I_N$ to $\mathcal H^\rho_0(N)$; if $Y$ is
a singular solution of \eqref{eq:defMorseSturm}, then the result holds under the additional assumption
that $\mathcal A$ does not contain any $N$-th root of unity.
\end{cor}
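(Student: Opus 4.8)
The plan is to show that $\rho\mapsto\mu\big(I_N|_{\mathcal H^\rho_*(N)}\big)$, and likewise for $\mathcal H^\rho_0(N)$, is a \emph{locally constant} function on $\mathcal A$; since $\mathcal A$ is connected and the function is $\Z$-valued, it is then constant. Local constancy will be obtained by combining three facts already at hand: the analyticity (hence continuity) of the relevant family of closed subspaces (Proposition~\ref{thm:corcriteriosmoothness}, or Proposition~\ref{thm:hoanalytic} when $Y$ is singular), the essential positivity of $I_N$ on those subspaces (Proposition~\ref{thm:prop2.10}), and the classical fact that the dimension of the negative spectral subspace of a norm-continuous family of self-adjoint operators that are compact perturbations of positive isomorphisms is locally constant away from the degeneracy locus.

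Concretely, fix $\rho_0\in\mathcal A$. By the quoted smoothness result there are $\varepsilon>0$, a closed subspace $\overline{\mathcal D}\subset\mathcal H$, and a continuous curve $\rho\mapsto\alpha(\rho)\in\mathcal L(\mathcal H)$ of isomorphisms, defined for $\rho$ near $\rho_0$ in $\mathds S^1$, with $\alpha(\rho)\big(\mathcal H^\rho_*(N)\big)=\overline{\mathcal D}$. Fix any Hilbert inner product on $\overline{\mathcal D}$, put $\widetilde I_\rho(u,v)=I_N\big(\alpha(\rho)^{-1}u,\alpha(\rho)^{-1}v\big)$, and let $S(\rho)\in\mathcal L(\overline{\mathcal D})$ be the self-adjoint operator representing $\widetilde I_\rho$. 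Since $I_N$ is a fixed bounded sesquilinear form and $\rho\mapsto\alpha(\rho)^{-1}$ is norm-continuous, $\rho\mapsto S(\rho)$ is norm-continuous. Moreover, transporting through $\alpha(\rho)$ the splitting $I_N=\llangle\cdot,\cdot\rrangle_N+B_\rho$ from the proof of Proposition~\ref{thm:prop2.10} — in which $B_\rho$ is continuous for the $C^0$-topology and depends on $\rho$ only through the scalar $C_V$, hence analytically — realizes each $S(\rho)$ as a positive isomorphism plus a compact operator. Consequently the part of $\mathrm{spec}\,S(\rho)$ below $0$ is a finite set of eigenvalues of finite multiplicity, so the index $\mathrm n^-\big(S(\rho)\big)$ of $\widetilde I_\rho$ is finite and equals $\mu\big(I_N|_{\mathcal H^\rho_*(N)}\big)$.

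Now use that $\rho_0\in\mathcal A$: by hypothesis $\widetilde I_{\rho_0}$ is nondegenerate, i.e. $0\notin\mathrm{spec}\,S(\rho_0)$, and since $0$ is isolated in that spectrum there is $\delta>0$ with $[-\delta,\delta]\cap\mathrm{spec}\,S(\rho_0)=\emptyset$. For $\rho$ close enough to $\rho_0$ one has $\|S(\rho)-S(\rho_0)\|<\delta$, so the Riesz projection $P_\rho=\chi_{(-\infty,0)}\big(S(\rho)\big)$ depends continuously on $\rho$ and $\mathrm{rank}\,P_\rho=\mathrm n^-\big(S(\rho)\big)$ is constant near $\rho_0$; hence the index is locally constant on $\mathcal A$, therefore constant. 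The statement for $\mathcal H^\rho_0(N)$ follows by the identical argument: $\mathcal H^\rho_0(N)\subset\mathcal H^\rho_*(N)$, so the decomposition of Proposition~\ref{thm:prop2.10} restricts and makes $I_N|_{\mathcal H^\rho_0(N)}$ essentially positive with finite index, and one only feeds in the analytic family $\{\mathcal H^\rho_0(N)\}$, which by Proposition~\ref{thm:corcriteriosmoothness} is available on all of $\mathds S^1$ when $Y$ is non singular, and by Proposition~\ref{thm:hoanalytic} on connected subsets avoiding the $N$-th roots of unity when $Y$ is singular — precisely the hypotheses under which the corollary is stated. The only genuinely non-formal point is the last continuity statement (continuity of the negative Riesz projection), and it works only because essential positivity produces a spectral gap at $0$ exactly when the form is nondegenerate; everything else is bookkeeping around the trivializations $\alpha(\rho)$.
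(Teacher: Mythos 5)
Your proof is correct and follows essentially the same route as the paper's: both rely on the continuity of the families of closed subspaces (Propositions~\ref{thm:corcriteriosmoothness} and~\ref{thm:hoanalytic}) together with the essential positivity of Proposition~\ref{thm:prop2.10} to conclude that the index can only jump at degeneracy points, hence is locally constant and therefore constant on the connected set $\mathcal A$. The paper states this as a one-line continuity argument; you have simply unpacked the mechanism (local trivialization $\alpha(\rho)$, norm-continuous transported operators $S(\rho)$, spectral gap at $0$ under nondegeneracy, rank-continuity of the negative Riesz projection) explicitly, which is a legitimate expansion of the same idea.
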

\begin{proof}
By continuity, the jumps of the map $\mathds S^1\ni\rho\mapsto\mathrm n_-\big(I_N\vert_{\mathcal H^\rho_*(N)\times \mathcal H^\rho_*(N)}\big)\in\N$
can only occur at those points $\rho$ where $I_N$ is degenerate on $\mathcal H^\rho_*(N)$.
The case of $\mathcal H^\rho_0(N)$ is analogous, using Proposition~\ref{thm:hoanalytic}.
\end{proof}
The discontinuities of the index function will be studied in Subsection~\ref{sub:jumpsindexfnctn} below.
\subsection{The linear Poincar\'e map}\label{sub:linearpoicare}
The last ingredient of our theory is the linear map $\mathfrak P:\C^n\oplus\C^n\to\C^n\oplus\C^n$ defined by:
\[\mathfrak P(v,w)=\big(TJ(1),TJ'(1)\big),\]
where $J:[0,1]\to\C^n$ is the (unique) solution of the Morse--Sturm system \eqref{eq:defMorseSturm} satisfying the initial
conditions $J(0)=v$ and $J'(0)=w$.
Using \eqref{eq:2.1b} one sees immediately that, given $N\ge1$, the $N$-th power $\mathfrak P^N$ is given
by:
\[\mathfrak P^N(v,w)=\big(T^NJ(1),T^NJ'(1)\big),\]
where $J$ is the solution of the equation $J''=N^2R_NJ$ satisfying $J(0)=v$ and $J'(0)=w$.
We will call $\mathfrak P$ the \emph{linear Poincar\'e map} of the Morse--Sturm system \eqref{eq:defMorseSturm};
clearly, $\mathfrak P$ is the complex linear extension of an endomorphism of $\R^n\oplus\R^n$ defined
using the real Morse--Sturm system. In particular, the spectrum $\mathfrak s(\mathfrak P)$ of $\mathfrak P$ is closed by conjugation.
\begin{prop}\label{thm:kerneleigenspace}
For all $\rho\in\mathds S^1$ and all $N\ge1$, the map $V\mapsto \big(V(0),V'(0)\big)$ gives an isomorphism from the kernel of the restriction of $I_N$ to
$\mathcal H_*^\rho(N)$ onto the $\rho^N$-eigenspace of $\mathfrak P^N$.
\end{prop}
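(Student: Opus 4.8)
The plan is to reduce everything to Proposition~\ref{thm:ker*} together with the very definition of the iterated Poincar\'e map $\mathfrak P^N$. By Proposition~\ref{thm:ker*}, the kernel of the restriction of $I_N$ to $\mathcal H_*^\rho(N)$ is precisely the finite dimensional space \eqref{eq:nucleo}, namely the space of $C^2$ maps $V\colon[0,1]\to\C^n$ solving the iterated system \eqref{eq:iteratedMS} and subject to the two boundary conditions $T^NV(1)=\rho^NV(0)$ and $T^NV'(1)=\rho^NV'(0)$. Denote this space by $\Ker_N^\rho$; the evaluation map $\Phi\colon V\mapsto\big(V(0),V'(0)\big)$ is well defined on $\Ker_N^\rho$ since its elements are of class $C^2$, and it is obviously complex linear.

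First I would check that $\Phi$ is injective. Since \eqref{eq:iteratedMS} is a linear second order ODE with continuous coefficients, the Cauchy problem with data $\big(V(0),V'(0)\big)$ has a unique solution; hence two elements of $\Ker_N^\rho$ with the same image under $\Phi$ must coincide.

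Next I would identify the image $\Phi\big(\Ker_N^\rho\big)$ with the $\rho^N$-eigenspace of $\mathfrak P^N$. Given $(v,w)\in\C^n\oplus\C^n$, let $J=J_{(v,w)}$ be the unique solution of \eqref{eq:iteratedMS} with $J(0)=v$, $J'(0)=w$; recall from Subsection~\ref{sub:linearpoicare} that $\mathfrak P^N(v,w)=\big(T^NJ(1),T^NJ'(1)\big)$. Then $J_{(v,w)}$ lies in $\Ker_N^\rho$ if and only if $T^NJ(1)=\rho^Nv$ and $T^NJ'(1)=\rho^Nw$, that is, if and only if $\mathfrak P^N(v,w)=\rho^N(v,w)$. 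Since every element of $\Ker_N^\rho$ equals $J_{(V(0),V'(0))}$ for the appropriate initial data, it follows that $\Phi\big(\Ker_N^\rho\big)$ is exactly the set of $(v,w)$ with $\mathfrak P^N(v,w)=\rho^N(v,w)$, i.e., the $\rho^N$-eigenspace of $\mathfrak P^N$. Together with injectivity and linearity this shows that $\Phi$ is a linear isomorphism onto that eigenspace.

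There is essentially no hard step here once Proposition~\ref{thm:ker*} is available: the only points demanding a little care are (i) that the boundary condition on $V'$ — which is \emph{not} part of the definition of $\mathcal H^\rho(N)$ — is automatically satisfied by elements of the kernel, which is exactly the content of the explicit description \eqref{eq:nucleo}; and (ii) the bookkeeping of the exponent, namely that the relevant eigenvalue is $\rho^N$ and not $\rho$, consistently with the appearance of $T^N$ and $\rho^N$ both in \eqref{eq:nucleo} and in the formula for $\mathfrak P^N$.
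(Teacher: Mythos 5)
Your argument is correct and follows exactly the route the paper takes: the paper's proof is the one-line remark that this follows immediately from Proposition~\ref{thm:ker*}, and what you have written out is precisely the routine verification behind that remark (injectivity by ODE uniqueness, surjectivity by matching the boundary conditions in \eqref{eq:nucleo} with the defining formula for $\mathfrak P^N$). Nothing to correct.
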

\begin{proof}It
follows immediately from Proposition~\ref{thm:ker*}.
\end{proof}

The \emph{restricted} linear Poincar\'e map $\mathfrak P_0$ is the complex linear extension
of the restriction of $\mathfrak P$ to the invariant subspace $\mathfrak J_0\subset\R^n\oplus\R^n$ defined
by:
\[\mathfrak J_0=\big\{(v,w)\in\R^n\oplus\R^n:g\big(w,Y(0)\big)-g\big(v,Y'(0)\big)=0\big\}.\]
The invariance of $\mathfrak J_0$ is easily established using \eqref{eq:conservazione} and the equalities $Y(0)=TY(1)$, $Y'(0)=TY'(1)$.
Clearly, $\mathfrak s(\mathfrak P_0)\subset\mathfrak s(\mathfrak P)$; actually, the following holds:
\begin{lem}
$\mathfrak s(\mathfrak P_0)\setminus\{1\}=\mathfrak s(\mathfrak P)\setminus\{1\}$.
\end{lem}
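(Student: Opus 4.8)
The plan is to realize $\mathfrak J_0$ as a $\mathfrak P$-invariant hyperplane on whose one-dimensional quotient $\mathfrak P$ acts as the identity; once this is established, the characteristic polynomial of $\mathfrak P$ factors as that of $\mathfrak P_0$ times $(\lambda-1)$, so $\mathfrak s(\mathfrak P)=\mathfrak s(\mathfrak P_0)\cup\{1\}$, and the claim follows at once. To set this up, introduce the linear functional $\ell:\C^n\oplus\C^n\to\C$, $\ell(v,w)=g\big(w,Y(0)\big)-g\big(v,Y'(0)\big)$, so that $\mathfrak J_0=\Ker(\ell)$. Since $g\big(Y(0),Y(0)\big)<0$ forces $Y(0)\ne0$, the functional $\ell$ is not identically zero; hence $\mathfrak J_0$ is a genuine hyperplane and $\ell$ descends to an isomorphism $\bar\ell$ from $Q:=(\C^n\oplus\C^n)/\mathfrak J_0$ onto $\C$.

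The key step is the identity $\ell\circ\mathfrak P=\ell$. Fix $(v,w)$ and let $J$ be the solution of \eqref{eq:defMorseSturm} with $J(0)=v$, $J'(0)=w$, so that $\mathfrak P(v,w)=\big(TJ(1),TJ'(1)\big)$. Using that $T$ is $g$-preserving, together with the relations $Y(0)=TY(1)$ and $Y'(0)=TY'(1)$ from (e2), one computes
\[
\ell\big(\mathfrak P(v,w)\big)=g\big(TJ'(1),TY(1)\big)-g\big(TJ(1),TY'(1)\big)=g\big(J'(1),Y(1)\big)-g\big(J(1),Y'(1)\big).
\]
Now $J$ and $Y$ are both solutions of \eqref{eq:defMorseSturm}, so by the conservation law \eqref{eq:conservazione} the quantity $g(J',Y)-g(J,Y')$ is constant on $[0,1]$; evaluating it at $1$ and at $0$ gives $g\big(J'(1),Y(1)\big)-g\big(J(1),Y'(1)\big)=g\big(w,Y(0)\big)-g\big(v,Y'(0)\big)=\ell(v,w)$. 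Hence $\ell\circ\mathfrak P=\ell$.

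From $\ell\circ\mathfrak P=\ell$ it follows immediately that $\mathfrak P$ preserves $\mathfrak J_0=\Ker(\ell)$, so $\mathfrak P$ induces an endomorphism $\bar{\mathfrak P}$ of $Q$ with $\bar\ell\circ\bar{\mathfrak P}=\bar\ell$; since $\bar\ell$ is an isomorphism, $\bar{\mathfrak P}=\mathrm{id}_Q$. Therefore $\mathfrak s(\mathfrak P)=\mathfrak s(\mathfrak P_0)\cup\mathfrak s(\bar{\mathfrak P})=\mathfrak s(\mathfrak P_0)\cup\{1\}$, whence $\mathfrak s(\mathfrak P)\setminus\{1\}=\mathfrak s(\mathfrak P_0)\setminus\{1\}$, as claimed. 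The only substantive point is the computation $\ell\circ\mathfrak P=\ell$ via \eqref{eq:conservazione}; the remaining steps are elementary linear algebra, so I do not anticipate any real obstacle.
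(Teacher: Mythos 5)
Your proof is correct, and it runs on the same engine as the paper's. The paper gives only the hint ``As in Corollary~\ref{thm:corker0ker*},'' which means: take an eigenvector $(v,w)$ of $\mathfrak P$ for an eigenvalue $\rho$, and use the conservation law \eqref{eq:conservazione} together with $T$ being $g$-preserving and condition (e2) to see that $\rho\,\ell(v,w)=\ell(v,w)$, so that $\rho\ne1$ forces $(v,w)\in\mathfrak J_0$; the reverse inclusion $\mathfrak s(\mathfrak P_0)\subset\mathfrak s(\mathfrak P)$ is automatic. You establish exactly the same identity $\ell\circ\mathfrak P=\ell$ by exactly the same computation, and merely package it a little more structurally: first as invariance of $\mathfrak J_0=\Ker(\ell)$ (a fact the paper already states when introducing $\mathfrak P_0$), then as triviality of the induced action on the one-dimensional quotient, giving the slightly sharper statement $\mathfrak s(\mathfrak P)=\mathfrak s(\mathfrak P_0)\cup\{1\}$. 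This is not a different route but a clean reformulation of the paper's intended argument.
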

\begin{proof}
As in Corollary~\ref{thm:corker0ker*}.
\end{proof}

\subsection{The index sequences and the nullity sequences}\label{sub:nulindseq}
Recall that, given a symmetric bilinear form $B:V\times V\to\R$ on a real vector space, the index and the nullity
of $B$ are defined respectively as the dimension of a maximal subspace on which $B$ is negative definite, and
the dimension of the kernel of $B$. Similarly, one defines index and nullity of a Hermitian sesquilinear bilinear
form on a complex vector space; the index and the nullity of a symmetric bilinear form on a real vector space
$V$ are equal respectively to the index and the nullity of the sesquilinear extension of $B$ to the complexification
of $V$.
\begin{defin}
For all $\rho\in\mathds S^1$, define the sequences:
\begin{align*}
&\lambda_*(\rho,N)=\text{index of $I_N$ on $\mathcal H_*^\rho(N)$},\quad
\lambda_0(\rho,N)=\text{index of $I_N$ on $\mathcal H_0^\rho(N)$},\\
&\nu_*(\rho,N)=\text{nullity of $I_N$ on $\mathcal H_*^\rho(N)$},\quad
\nu_0(\rho,N)=\text{nullity of $I_N$ on $\mathcal H_0^\rho(N)$},
\end{align*}
where $N\ge1$.
\end{defin}
Clearly:
\[\lambda_0(\rho,N)\le\lambda_*(\rho,N)\le\lambda_0(\rho,N)+1\]
for all $N\ge1$ and all $\rho\in\mathds S^1$. By Corollary~\ref{thm:corker0ker*},
we have $\nu_*(\rho,N)\le\nu_0(\rho,N)$ when $\rho$ is not an $N$-th root of unity;
we will show later (Corollary~\ref{thm:corstimanu}) that
$\nu_0(\rho,1)\le\nu_*(\rho,1)$.
Corollary~\ref{thm:contindex} above says that the maps $\rho\mapsto\lambda_*(\rho,N)$ are constant
on connected subsets of the circle where $\nu_*(\rho,N)$ vanishes.

The theory developed so far gives us the following properties of the index and the nullity sequences:
\begin{prop}\label{thm:prop2.15}
For all $N\ge1$, the following statements hold.
\begin{itemize}
\item[(a)] The map $\rho\mapsto\lambda_*(\rho,N)$ is lower semi-continuous on $\mathds S^1$,
and so is $\rho\mapsto\lambda_0(\rho,N)$ if $Y$ is not a singular solution of \eqref{eq:defMorseSturm};
if $Y$ is a singular solution, then the map $\rho\mapsto\lambda_0(\rho,N)$ is lower semi-continuous on every connected
component of $\mathds S^1$ that does not contain $N$-th roots of unity.
\item[(b)]  $\nu_*(\rho,N)=\Dim\big(\Ker(\mathfrak P^N-\rho^N)\big)$.
\item[(c)] $\lambda_*(\rho,N)=\lambda_*(\bar\rho,N)$,  $\nu_*(\rho,N)=\nu_*(\bar\rho,N)$, $\lambda_0(\rho,N)=\lambda_*(\bar\rho,N)$,
and\hfill\break $\nu_0(\rho,N)=\nu_*(\bar\rho,N)$.
\end{itemize}
\end{prop}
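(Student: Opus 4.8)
The plan is to treat the three items separately, in roughly increasing order of effort. Item~(b) is immediate from Proposition~\ref{thm:kerneleigenspace}: by definition $\nu_*(\rho,N)$ is the dimension of the kernel of $I_N$ on $\mathcal H_*^\rho(N)$, and that proposition gives a linear isomorphism $V\mapsto\big(V(0),V'(0)\big)$ of this kernel onto the $\rho^N$-eigenspace of $\mathfrak P^N$, i.e.\ onto $\Ker(\mathfrak P^N-\rho^N)$; so I would simply quote it.

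For item~(a) I would combine the analyticity of the subspace families (Proposition~\ref{thm:corcriteriosmoothness}) with the essential positivity of the index form (Proposition~\ref{thm:prop2.10}). Fix $\rho_0\in\mathds S^1$. By Proposition~\ref{thm:corcriteriosmoothness} there are $\varepsilon>0$, a fixed closed subspace $\overline{\mathcal D}\subset\mathcal H$ and a norm-continuous (indeed analytic) family of isomorphisms $\rho\mapsto\alpha(\rho)$ of $\mathcal H$ with $\alpha(\rho)\big(\mathcal H_*^\rho(N)\big)=\overline{\mathcal D}$ for $\rho$ near $\rho_0$. Transporting the restriction of $I_N$ to $\mathcal H_*^\rho(N)$ by $\alpha(\rho)^{-1}$ and representing the resulting Hermitian form on $\overline{\mathcal D}$, with respect to $\llangle\cdot,\cdot\rrangle_N$, by a bounded self-adjoint operator $S(\rho)$, one obtains a norm-continuous path $\rho\mapsto S(\rho)$; by Proposition~\ref{thm:prop2.10} (whose conclusion is stable under this transport) each $S(\rho)$ is a compact perturbation of a positive isomorphism, hence $0$ lies outside the essential spectrum of $S(\rho)$ and $\lambda_*(\rho,N)$ is the finite dimension of the negative spectral space of $S(\rho)$. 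Since the strictly negative eigenvalues of $S(\rho_0)$ persist for $\rho$ near $\rho_0$, that dimension is lower semi-continuous, i.e.\ $\lambda_*(\rho_0,N)\le\liminf_{\rho\to\rho_0}\lambda_*(\rho,N)$. The argument for $\lambda_0$ is verbatim, using the analyticity of $\big\{\mathcal H_0^\rho(N)\big\}$ furnished globally by Proposition~\ref{thm:corcriteriosmoothness} when $Y$ is non-singular, and only on connected arcs of $\mathds S^1$ avoiding the $N$-th roots of unity by Proposition~\ref{thm:hoanalytic} when $Y$ is singular.

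For item~(c) I would use complex conjugation $c\colon\mathcal H\to\mathcal H$, $c(V)=\overline V$. Since $g$, $T$ and $R$ are complexifications of real data and $Y$ is $\R^n$-valued, one has $\overline{g(v,w)}=g(\bar v,\bar w)$, $\overline{R_N(t)v}=R_N(t)\bar v$, $T\bar v=\overline{Tv}$ and $\overline{Y_N}=Y_N$. Hence $c$ carries $\mathcal H^\rho(N)$ onto $\mathcal H^{\bar\rho}(N)$ (the condition $T^NV(1)=\rho^NV(0)$ conjugates to $T^N\overline V(1)=\bar\rho^N\overline V(0)$) and carries $\mathcal H_*(N)$ and $\mathcal H_0(N)$ onto themselves (the constant $C_V$ becomes $\overline{C_V}$, and $0$ stays $0$), so it restricts to conjugate-linear isomorphisms $\mathcal H_*^\rho(N)\to\mathcal H_*^{\bar\rho}(N)$ and $\mathcal H_0^\rho(N)\to\mathcal H_0^{\bar\rho}(N)$. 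Moreover $I_N(\overline V,\overline W)=\overline{I_N(V,W)}$, so $I_N(\overline V,\overline V)=I_N(V,V)\in\R$; thus $c$ sends a subspace on which $I_N$ is negative definite to one with the same property, and sends $\Ker I_N$ to $\Ker I_N$. This yields $\lambda_*(\rho,N)=\lambda_*(\bar\rho,N)$, $\nu_*(\rho,N)=\nu_*(\bar\rho,N)$, $\lambda_0(\rho,N)=\lambda_0(\bar\rho,N)$ and $\nu_0(\rho,N)=\nu_0(\bar\rho,N)$.

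I do not expect a serious obstacle, since the content of~(a) is already packaged in Propositions~\ref{thm:corcriteriosmoothness} and~\ref{thm:prop2.10}, and (b) and (c) are formal. The one point requiring care is the singular case of~(a): there the family $\big\{\mathcal H_0^\rho(N)\big\}$ is genuinely discontinuous at the $N$-th roots of unity (exactly where $\mathcal H_0^\rho(N)$ jumps up to meet $\mathcal H_*^\rho(N)$, cf.\ Lemma~\ref{thm:spazicoincsing}), so lower semi-continuity of $\lambda_0$ cannot be asserted across those points — which is precisely the restriction in the statement.
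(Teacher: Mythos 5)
Your proof is correct and follows the same route as the paper's: lower semi-continuity from the continuity/analyticity of the subspace families (Propositions~\ref{thm:corcriteriosmoothness}, \ref{thm:hoanalytic}) together with essential positivity (Proposition~\ref{thm:prop2.10}), item~(b) by quoting Proposition~\ref{thm:kerneleigenspace}, and item~(c) via pointwise complex conjugation $V\mapsto\overline V$. One remark: the last two equalities in the statement of~(c) read $\lambda_0(\rho,N)=\lambda_*(\bar\rho,N)$ and $\nu_0(\rho,N)=\nu_*(\bar\rho,N)$ in the paper, which is almost certainly a typo for $\lambda_0(\bar\rho,N)$ and $\nu_0(\bar\rho,N)$; what you prove (and what the paper's own proof establishes) is the corrected version.
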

\begin{proof}
The lower continuity of $\lambda_*$ and $\lambda_0$ claimed in part (a) follows easily from
the continuity of the family of subspaces $\mathcal H_*^\rho(N)$ and
$\mathcal H_0^\rho(N)$, which was  proved in Proposition~\ref{thm:corcriteriosmoothness}
for  the non singular case, and in Proposition~\ref{thm:hoanalytic} for the singular case.
Part (b) is a restatement of Proposition~\ref{thm:kerneleigenspace}.
For part (c), it suffices to observe that the map $V\mapsto\overline V$ (pointwise complex conjugation)
sends isomorphically the kernel (resp., a maximal negative subspace) of $I_N\vert_{\mathcal H_*^\rho(N)\times\mathcal H_*^\rho(N)}$
onto the kernel (resp., a maximal negative subspace) of $I_N\vert_{\mathcal H_*^{\overline\rho}(N)\times\mathcal H_*^{\overline\rho}(N)}$.
Similarly for $I_N\vert_{\mathcal H_0^\rho(N)\times\mathcal H_0^\rho(N)}$.
\end{proof}
\end{section}

\begin{section}{On the nullity sequences}\label{sec:nullity}
The aim of this section is to study the kernel of the restriction of the bilinear form $I=I_1$
to the space $\mathcal H^\rho_0$. We will perform this task by determining a differential equation
satisfied by vector fields $V_\rho$ that are eigenvectors of the restriction of $I_1$ to $\mathcal H^\rho_0$;
this is obtained by functional analytical techniques. The kernel of such restriction is obtained
as a special case when the eigenvalue is zero.
It is convenient to treat this subject using an $L^2$-approach (this facilitates the computation of adjoint maps),
and for this one must enter in the realm of unbounded operators. The following notation will be used:
\[\mathfrak K= L^2\big([0,1],\C^n\big),\]
\[\mathfrak K_*=\Big\{V\in\mathfrak K:g(V,Y)-2\int_0^tg(V,Y')\,\mathrm ds=2t\int_0^1 g(V,Y')\,\de s \ \text{a.e.\ on}\ [0,1]\Big\},\]
and:
\[\mathfrak K_0=\Big\{V\in\mathfrak K:g(V,Y)-2\int_0^tg(V,Y')\,\mathrm ds=0\ \text{a.e.\ on}\ [0,1]\Big\}.\]
We want to describe the orthogonal subspaces to $\mathfrak K_*$ and $\mathfrak K_0$ in $\mathfrak K$ relatively to the inner product
\begin{equation}\label{eq:definnerprodK}
\langle V,W\rangle_{\mathfrak K}=\int_0^1 g_t^{(r)}( V,W)\,\de t.
\end{equation}
The subspaces $\mathfrak K_*$ and $\mathfrak K_0$ are the kernels respectively of the bounded linear operators $T_*:\mathfrak K\to L^2([0,1];\C)$ given by
\begin{equation}
T_*(V)(t)=g(V(t),Y(t))-2\int^t_0 g(V,Y')\de s-2t\int_0^1 g(V,Y')\,\de s,
\end{equation}
and $T_0:\mathfrak K\to L^2\big([0,1];\C\big)$ given by
\begin{equation}
T_0(V)(t)=g\big(V(t),Y(t)\big)-2\int^t_0 g(V,Y')\,\de s.
\end{equation}
\begin{lem}
The operators $T_*$ and $T_0$ have closed (and finite codimensional) image.
\end{lem}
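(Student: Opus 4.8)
The statement to prove is that the bounded operators $T_*$ and $T_0$ from $\mathfrak K = L^2([0,1],\C^n)$ into $L^2([0,1];\C)$ have closed and finite-codimensional image. The plan is to show that the image of each operator is in fact contained in a fixed finite-dimensional subspace of $L^2([0,1];\C)$; once this is established, closedness and finite codimension are both automatic (a finite-dimensional subspace of a Hilbert space is always closed, and if the image is finite-dimensional then a fortiori it has finite codimension—so actually the image is closed simply because it sits inside something finite-dimensional, and being itself a subspace of a finite-dimensional space it is finite-dimensional, hence closed). Thus the whole problem reduces to identifying the range.

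The key observation is a regularity/structure statement about the functions $T_0(V)$ and $T_*(V)$. Write $\varphi(t) = g(V(t),Y(t)) - 2\int_0^t g(V,Y')\,\de s$, so that $T_0(V) = \varphi$ and $T_*(V)(t) = \varphi(t) - 2t\int_0^1 g(V,Y')\,\de s$. First I would note that $\varphi$ need not itself be smooth, but I would instead work with a function built to be regular. Since $Y$ is of class $C^2$ and $g(Y,Y)$ never vanishes, the natural thing is to divide by $g(Y,Y)$: consider $\psi(t) := \varphi(t)/g(Y(t),Y(t))$. Differentiating the numerator in the sense of distributions and using $Y'' = RY$ together with the $g$-symmetry of $R$ (equation \eqref{eq:conservazione} in the form $\tfrac{\de}{\de t} g(Y',Y) = g(Y'',Y) = g(RY,Y) = g(Y,RY) = g(Y,Y'')$, hence $g(Y',Y)$ has derivative... actually $g(Y'',Y)$), one computes $\varphi'(t) = g(V',Y) + g(V,Y') - 2g(V,Y') = g(V',Y) - g(V,Y')$ in the distributional sense — except $V$ is only $L^2$, so $\varphi$ itself is only $H^{-1}$-ish; the correct move is to rewrite $\varphi$ after one integration by parts so that only $V$ (not $V'$) appears, which is exactly how $T_0$ is already written. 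So $\varphi \in H^1([0,1];\C)$ whenever... no: $\int_0^t g(V,Y')\,\de s$ is absolutely continuous with $L^2$ derivative, and $g(V(t),Y(t))$ is only $L^2$. Hence $\varphi \in L^2$ but not obviously better.

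The right approach, then, is the following. Let $c(V) = \int_0^1 g(V,Y')\,\de s$, a bounded linear functional on $\mathfrak K$. I claim that for $\varphi = T_0(V)$ one has the pointwise identity $\varphi(t) - \varphi(0) \in$ (something controlled), but more usefully: I would show directly that $T_0(\mathfrak K)$ equals the set of all $L^2$ functions $\phi$ of the form $\phi(t) = g(W(t),Y(t)) - 2\int_0^t g(W,Y')\,\de s$ as $W$ ranges over $\mathfrak K$, and that this is all of a concrete finite-codimensional (indeed, codimension-controlled) subspace. Concretely, given any scalar function $\phi\in L^2([0,1];\C)$, solve for $V$: pick $V(t) = \tfrac{1}{g(Y,Y)}\big(\phi(t) + 2\int_0^t g(V,Y')\,\de s\big)\cdot Y(t) + (\text{component } \perp Y)$; the $Y$-component of $V$ is determined by a Volterra integral equation in $\phi$, which is always solvable, while the $Y^\perp$-component is free. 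This shows $T_0$ is in fact \emph{surjective} onto $L^2([0,1];\C)$ — so its image is closed of codimension zero. For $T_*$, the same computation shows $T_*(V) = T_0(V) - 2t\, c(V)$, and since $T_0$ is already surjective with $c$ an independent functional, $T_*$ is surjective as well, or at worst its image has codimension at most one (spanned by $t \mapsto t$); either way it is closed and finite-codimensional.

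The main obstacle I anticipate is the low regularity of $V$: because $V \in L^2$ only, one cannot naively differentiate $g(V,Y)$, and one must be careful that the Volterra equation for the $Y$-component of $V$ is being solved in the correct space ($L^2$, not $C^0$) — this requires checking that the Volterra kernel $t\mapsto \tfrac{g(Y',Y)}{g(Y,Y)}$ is bounded (it is, since $Y\in C^2$ and $g(Y,Y)<0$ is bounded away from $0$ on the compact interval $[0,1]$), so the iteration converges in $L^2$. The secondary subtlety is bookkeeping the exact codimension for $T_*$: one should verify whether the constraint $T_*(V)(0)=0$ (which holds identically since the formula gives $0$ at $t=0$) together with the subtracted linear term $-2t\,c(V)$ cuts the codimension down, but since the conclusion only asks for \emph{finite} codimension, it suffices to exhibit \emph{any} finite-dimensional complement, e.g. noting $\mathrm{Im}(T_*) \supseteq \{\phi \in L^2 : \phi(0)=0\} \cap \mathrm{Im}(T_0)$-type subspace, which is already of finite (in fact zero or one) codimension. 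I would present the surjectivity-of-$T_0$ argument in full and then dispose of $T_*$ in a sentence.
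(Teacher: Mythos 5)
Your core argument --- restrict to inputs of the form $V=\mu\,Y$ and solve the resulting scalar Volterra equation of the second kind --- is essentially the paper's: the paper sets $\widetilde T_0(\mu)=T_0(\mu\,Y)$ and $\widetilde T_*(\mu)=T_*(\mu\,Y)$ as operators on $L^2\big([0,1],\C\big)$, observes that these are compact (Volterra) perturbations of the multiplication isomorphism $\mu\mapsto \mu\cdot g(Y,Y)$, hence Fredholm of index zero, and then invokes the elementary fact that any subspace of a Hilbert space containing a closed finite-codimensional subspace is itself closed and of finite codimension. Your explicit use of the quasi-nilpotence of the Volterra kernel sharpens this to actual invertibility of $\widetilde T_0$, hence surjectivity of $T_0$, which is a correct and slightly stronger statement. (It also shows that your opening plan --- to prove the image lies inside a fixed \emph{finite-dimensional} subspace --- was exactly wrong; you rightly abandon it, but the final write-up should not open with it.)

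The one real gap is in the sentence disposing of $T_*$. From $T_*(V)=T_0(V)-2t\,c(V)$ and the surjectivity of $T_0$ you cannot read off that $\mathrm{Im}(T_*)$ is \emph{closed}: a proper dense subspace of $L^2$ can have algebraic codimension one without being closed, so ``codimension at most one, hence closed and finite-codimensional'' is a non sequitur as written. The clean repair mirrors your own $T_0$ argument: $\widetilde T_*(\mu)=\widetilde T_0(\mu)-2t\int_0^1\mu\,g(Y,Y')\,\de s$ is a rank-one perturbation of the isomorphism $\widetilde T_0$, hence Fredholm of index zero, so $\mathrm{Im}(\widetilde T_*)$ is closed of codimension at most one; since $\mathrm{Im}(T_*)\supseteq\mathrm{Im}(\widetilde T_*)$, the containment fact quoted above yields the claim for $T_*$.
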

\begin{proof}
Consider the operators $\widetilde T_*,\widetilde T_0:L^2\big([0,1],\C)\to L^2\big([0,1],\C\big)$ defined
respectively by $\widetilde T_*(\mu)=T_*(\mu\cdot Y)$ and $\widetilde T_0(\mu)=T_0(\mu\cdot Y)$.
Clearly, $\mathrm{Im}(\widetilde T^*)\subset\mathrm{Im}(T^*)$ and $\mathrm{Im}(\widetilde T^0)\subset\mathrm{Im}(T^0)$;
it suffices to show that $\widetilde T_*$ and $\widetilde T_0$ have finite codimensional closed image.\footnote{%
Recall that given a closed finite codimensional subspace $X$ of a Hilbert space $H$, then any subspace $Y\subset H$
that contains $X$ is closed (and finite codimensional).}
Now, it is easy to see that both $\widetilde T_*$ and $\widetilde T_0$ are Fredholm operators of index zero;
namely, they are compact perturbations of the isomorphism $L^2\big([0,1],\C\big)\ni\mu\mapsto\mu\cdot g(Y,Y)\in L^2\big([0,1],\C\big)$.
\end{proof}

Keeping in mind \eqref{eq:defAtN}, the adjoint operators $(T_*)^\star$ and $(T_0)^\star$ can be easily computed as:
\begin{multline}
(T_*)^\star(\phi)(t)=\phi(t)\cdot \big(A(t,1)Y(t)\big)-2\big(A(t,1)Y'(t)\big)\cdot\int^1_t\phi(t)\,\de s
\\+2 \big(A(t,1)Y'(t)\big)\cdot \int_0^1
(1-s)\phi(s)\,\de s
\end{multline}
and
\begin{equation}
(T_0)^\star(\phi)(t)=\phi(t)\cdot \big(A(t,1)Y(t)\big)-2\big(A(t,1)Y'(t)\big)\cdot\int^1_t\phi(s)\,\de s.
\end{equation}
Since $T_*$ and $T_0$ have closed image, then also the adjoints $(T_*)^\star$ and $(T_0)^\star$ have closed image, and
\[\mathfrak K_*^\bot=\Ker(T_*)^\perp=\mathrm{Im}\big((T_*)^\star\big),\quad \mathfrak K_0^\bot=\Ker(T_0)^\perp=\mathrm{Im}\big((T_0)^\star\big).\]
The following corollary follows straightforward.\footnote{%
In the sequel, we will use the symbol $A$ to mean $A(\cdot,1)$.}
\begin{cor}
The orthogonal space $\mathfrak K_*^\bot$ in $\mathfrak K$ is:
\begin{equation}
\mathfrak K_*^\bot=\big\{h''\cdot AY+2 h'\cdot AY'\, : h\in H^2\big([0,1];\C\big)\cap H_0^1\big([0,1];\C\big)\big\}
\end{equation}
and the orthogonal space $\mathfrak K_0^\bot$ in $\mathfrak K$ is
\begin{equation}
\mathfrak K_0^\bot=\big\{h'\cdot AY+2 h\cdot AY'\, : h\in H^1\big([0,1];\C\big)\ \text{and}\ h(1)=0 \big\}.
\end{equation}
\end{cor}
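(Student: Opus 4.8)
**Proof proposal for the corollary on $\mathfrak K_*^\perp$ and $\mathfrak K_0^\perp$.**

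The plan is to exploit the fact, already established in the preceding lemma, that $T_*$ and $T_0$ have closed (finite codimensional) image. For a bounded operator with closed range one has $(\Ker S)^\perp=\overline{\mathrm{Im}(S^\star)}=\mathrm{Im}(S^\star)$, so it suffices to identify $\mathrm{Im}\big((T_*)^\star\big)$ and $\mathrm{Im}\big((T_0)^\star\big)$ with the two explicit families of vector fields in the statement. The adjoint formulas displayed just above the corollary give, for $\phi\in L^2\big([0,1];\C\big)$,
\[(T_0)^\star(\phi)(t)=\phi(t)\cdot(AY)(t)-2(AY')(t)\cdot\int_t^1\phi(s)\,\de s,\]
and a similar expression with an extra term for $(T_*)^\star$. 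First I would set, for a given $\phi$, the function
\[h(t)=\int_0^t\phi(s)\,\de s-\int_0^1\phi(s)\,\de s\qquad\text{(for the $T_0$ case)},\]
so that $h\in H^1\big([0,1];\C\big)$, $h(1)=0$, $h'=\phi$ a.e., and $-\int_t^1\phi(s)\,\de s=h(t)$; substituting into the adjoint formula gives exactly $(T_0)^\star(\phi)=h'\cdot AY+2h\cdot AY'$. Conversely, given any $h\in H^1\big([0,1];\C\big)$ with $h(1)=0$, the element $\phi=h'$ lies in $L^2$ and reproduces this vector field, so the two sets coincide. This settles the $\mathfrak K_0^\perp$ assertion.

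For $\mathfrak K_*^\perp$ the bookkeeping is slightly heavier because $(T_*)^\star(\phi)$ involves both $\int_t^1\phi$ and the constant $\int_0^1(1-s)\phi(s)\,\de s$. Here I would argue that the natural primitive should be taken to vanish at \emph{both} endpoints: given $\phi\in L^2$, define $h$ to be the unique $H^2$ function with $h''=\phi$ (weakly) and $h(0)=h(1)=0$, i.e.
\[h(t)=\int_0^t(t-s)\phi(s)\,\de s-t\int_0^1(1-s)\phi(s)\,\de s,\]
so that $h\in H^2\cap H^1_0$, $h''=\phi$, and $h'(t)=\int_0^t\phi(s)\,\de s-\int_0^1(1-s)\phi(s)\,\de s=-\int_t^1\phi(s)\,\de s+\int_0^1(1-s)\phi(s)\,\de s$. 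Plugging $h'$ and $h''$ into the formula for $(T_*)^\star(\phi)$ collapses the two integral terms into $2h'\cdot AY'$, yielding $(T_*)^\star(\phi)=h''\cdot AY+2h'\cdot AY'$. For the reverse inclusion, any $h\in H^2\cap H^1_0$ gives $\phi=h''\in L^2$, and the boundary conditions $h(0)=h(1)=0$ guarantee that this $h$ is exactly the primitive just constructed, so the vector field $h''\cdot AY+2h'\cdot AY'$ is in the image of $(T_*)^\star$. Combining with $\mathfrak K_*^\perp=\mathrm{Im}\big((T_*)^\star\big)$ finishes the proof.

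The only genuinely delicate point is checking that the substitution really does cause the constant $\int_0^1(1-s)\phi(s)\,\de s$ appearing in $(T_*)^\star$ to match the one produced by $h'$, and that no extra boundary contribution is lost; this is a short but careful computation with the antiderivative, and it is where I expect the main (modest) obstacle to lie. Everything else — closedness of the range from the lemma, the identity $(\Ker S)^\perp=\mathrm{Im}(S^\star)$ for closed-range operators, and the regularity bookkeeping $H^2\cap H^1_0$ versus $\{h\in H^1:h(1)=0\}$ — is routine.
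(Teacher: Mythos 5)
Your strategy matches the paper's exactly: use the closed-range identity $\mathfrak K_\bullet^\perp=\mathrm{Im}\big((T_\bullet)^\star\big)$ together with the fact that $h\mapsto h''$ is an isomorphism of $H^2\cap H^1_0$ onto $L^2$ (resp.\ $h\mapsto h'$ from $\{h\in H^1:h(1)=0\}$ onto $L^2$), and explicitly reparameterize the image of the adjoint by $h$. The $\mathfrak K_0^\perp$ computation is fine.

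However, the point you yourself flag as ``the main obstacle'' in the $\mathfrak K_*^\perp$ case does in fact contain an error. From your definition of $h$ one has
\[
h'(t)=\int_0^t\phi(s)\,\de s-\int_0^1(1-s)\phi(s)\,\de s,
\]
which is correct, but the subsequent rewriting
\[
\int_0^t\phi(s)\,\de s-\int_0^1(1-s)\phi(s)\,\de s=-\int_t^1\phi(s)\,\de s+\int_0^1(1-s)\phi(s)\,\de s
\]
is false: subtracting the two sides gives $\int_0^1\phi(s)\,\de s-2\int_0^1(1-s)\phi(s)\,\de s=\int_0^1(2s-1)\phi(s)\,\de s$, which is nonzero in general. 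The correct identity is
\[
\int_0^t\phi(s)\,\de s-\int_0^1(1-s)\phi(s)\,\de s=-\int_t^1\phi(s)\,\de s+\int_0^1 s\,\phi(s)\,\de s.
\]
Thus $2h'(t)=-2\int_t^1\phi\,\de s+2\int_0^1 s\,\phi\,\de s$, which does \emph{not} coincide with the $AY'$-coefficient in the displayed formula for $(T_*)^\star$, since that formula carries $+2\int_0^1(1-s)\phi\,\de s$. In other words, as written your substitution does not close. One can check directly (e.g.\ by verifying that $h''\cdot AY+2h'\cdot AY'\perp\mathfrak K_*$ via a single integration by parts, using $h(0)=h(1)=0$) that the version compatible with the stated corollary is the one with $\int_0^1 s\,\phi(s)\,\de s$; the constant $\int_0^1(1-s)\phi(s)\,\de s$ displayed in the paper, and the sign in front of $2t\int_0^1 g(V,Y')\,\de s$ in the definition of $\mathfrak K_*$, both carry sign slips. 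So your choice of $h$ (the Dirichlet primitive of $\phi$) is exactly right and the argument goes through once the algebraic identity for $h'$ is corrected; but you should fix that step rather than leave an identity that fails.
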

\begin{proof}
It follows easily from the preceding observations, keeping in mind that the maps
$H^2\big([0,1];\C\big)\cap H_0^1\big([0,1];\C\big)\ni h\mapsto h''\in L^2\big([0,1],\C)$
and $\big\{h\in H^1\big([0,1];\C\big):h(1)=0\big\}\ni h\mapsto h'\in L^2\big([0,1],\C\big)$ are isomorphisms.
\end{proof}
The bilinear form $I_1$ (defined in \eqref{eq:defIndexformCn}) is represented in $L^2\big([0,1];\C^n\big)$  with respect
to the inner product \eqref{eq:definnerprodK} by the unbounded self-adjoint operator:
\begin{equation}\label{roper}
\mathfrak J(V)=-AV''+ARV
\end{equation}
densely defined on the subspace $D=H^2\big([0,1];\C^n\big)\cap \Hi^\rho(1)$.

By an eigenvalue of the restriction of $I_1$ to $\mathfrak K_*\cap\Hi^\rho(1)$ we will mean a complex number $\lambda_*$ such that
there is a non-zero $V_*\in \mathfrak K_*\cap\Hi^\rho(1)$ satisfying
\begin{equation}
I_1(V_*,W)=\lambda_*\cdot\int_0^1 g_t^{(r)}\big(V_*,W\big)\,\de t
\end{equation}
for every $V,W\in \mathfrak K_*$. Equivalently,  $\lambda_*$ is an eigenvalue of the restriction of $I_1$ to $\mathfrak K_*\cap\Hi^\rho(1)$
if there exists $V_*\in \mathfrak K_*\cap H^2\big([0,1];\C^n\big)\cap \Hi^\rho(1)$ such that
\begin{equation}\label{decom}
\mathfrak J(V_*)-\lambda_*\cdot V_*\in \mathfrak K^\bot_*.
\end{equation}

\begin{prop}\label{difeq*}
A vector $V_*\in  \mathfrak K_*\cap\Hi^\rho(1)$ is an eigenvector for the restriction of $I_1$ to $\mathfrak K_*\cap\Hi^\rho(1)$ with eigenvalue $\lambda_*\in\C$
if and only if $V_*\in H^2\big([0,1];\C^n\big)\cap \Hi^\rho(1)$ and it satisfies
\begin{equation}\label{differeq}
-V''_*+RV_*-\lambda_*\cdot A^{-1}V_*=h''\cdot Y+2 h'\cdot Y',
\end{equation}
where $h$ is the unique map in $H^2\big([0,1];\C\big)\cap H_0^1\big([0,1];\C\big)$ satisfying
\begin{equation}\label{h}
\lambda_*\cdot g^{(r)}_t(V_*,Y)=[h'\cdot g(Y,Y)]'.
\end{equation}
\end{prop}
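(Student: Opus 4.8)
The plan is to take the abstract characterization of eigenvectors already recorded in \eqref{decom} — that $V_*\in\mathfrak K_*\cap\Hi^\rho(1)$ is an eigenvector of $I_1|_{\mathfrak K_*\cap\Hi^\rho(1)}$ with eigenvalue $\lambda_*$ exactly when $V_*\in H^2\big([0,1];\C^n\big)$ and $\mathfrak J(V_*)-\lambda_*\cdot V_*\in\mathfrak K_*^\perp$ — and to make it concrete by feeding in the explicit description of $\mathfrak K_*^\perp$ obtained just above. That description turns the membership $\mathfrak J(V_*)-\lambda_*\cdot V_*\in\mathfrak K_*^\perp$ into: there is $h\in H^2\big([0,1];\C\big)\cap H^1_0\big([0,1];\C\big)$ with $\mathfrak J(V_*)-\lambda_*\cdot V_*=h''\cdot AY+2h'\cdot AY'$. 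Since \eqref{roper} gives $\mathfrak J(V_*)=A\big(-V_*''+RV_*\big)$ and each $A(t)$ is an isomorphism with smooth inverse, I would apply $A^{-1}$ to this identity: the left side becomes $-V_*''+RV_*-\lambda_*\cdot A^{-1}V_*$ and the right side becomes $h''\cdot Y+2h'\cdot Y'$, which is precisely \eqref{differeq}.

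Next I would show that the function $h$ occurring here is uniquely determined and is the one singled out by \eqref{h}. For uniqueness, the assignment $h\mapsto h''Y+2h'Y'$ is injective on $H^2\cap H^1_0$: pairing $h''Y+2h'Y'=0$ with $Y$ and using $2g(Y',Y)=\big(g(Y,Y)\big)'$ gives $\big[h'\,g(Y,Y)\big]'=0$, hence $h'=c/g(Y,Y)$, and then $h(0)=h(1)=0$ together with $g(Y,Y)<0$ everywhere (so $\int_0^1\de t/g(Y,Y)\neq0$) force $c=0$ and $h\equiv0$. To recover \eqref{h}, I would pair \eqref{differeq} itself with $Y$ under $g$: the second-order part $g(-V_*'',Y)+g(RV_*,Y)$ telescopes — using the $g$-symmetry of $R$ and $Y''=RY$ from \eqref{eq:defMorseSturm}, exactly as in \eqref{eq:conservazione} — to $-\dds\big[g(V_*',Y)-g(V_*,Y')\big]$, which vanishes because $V_*\in\mathfrak K_*$ (for $V_*\in H^2$ this is the pointwise statement that $g(V_*',Y)-g(V_*,Y')$ is constant). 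The term left on the left is $-\lambda_*\,g(A^{-1}V_*,Y)=-\lambda_*\,g_t^{(r)}(V_*,Y)$ by the defining relation of $A$, and the right side pairs to $\big[h'\,g(Y,Y)\big]'$; collecting the two sides is exactly equation \eqref{h}. Conversely, reading \eqref{h} as a first-order equation for $h'\,g(Y,Y)$, one integration plus the conditions $h(0)=h(1)=0$ (and again $\int_0^1\de t/g(Y,Y)\neq0$) produce a unique $h\in H^2\cap H^1_0$, so the $h$ in \eqref{differeq} is precisely this one.

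For the reverse implication I would simply run the argument backwards: if $V_*\in\mathfrak K_*\cap H^2\big([0,1];\C^n\big)\cap\Hi^\rho(1)$ satisfies \eqref{differeq} with $h$ the unique element of $H^2\cap H^1_0$ solving \eqref{h}, then applying $A$ to \eqref{differeq} gives $\mathfrak J(V_*)-\lambda_*\cdot V_*=h''\cdot AY+2h'\cdot AY'$, which lies in $\mathfrak K_*^\perp$ by the explicit description of that space, so \eqref{decom} shows $V_*$ is an eigenvector with eigenvalue $\lambda_*$.

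The step I expect to be the main obstacle is not any of the above but the bookkeeping already absorbed into \eqref{decom}: one must know that $\mathfrak J$ genuinely represents $I_1$ on the relevant domain — i.e.\ that the boundary terms produced by integrating $\int g(V',W')$ by parts drop out when the test field $W$ ranges over $\mathfrak K_*\cap\Hi^\rho(1)$ — and that a weak eigenvector is automatically of class $H^2$ by elliptic regularity. Both facts are supplied by the discussion preceding the proposition, so I would simply quote them; everything downstream is the routine pairing computation against $Y$ and a one-variable ODE solved by quadrature.
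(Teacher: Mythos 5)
Your argument follows the same route the paper's (very terse) proof indicates: bootstrap regularity, then read \eqref{decom} together with \eqref{roper} and the explicit description of $\mathfrak K_*^\perp$ to obtain \eqref{differeq}, and finally pair with $Y$ and use injectivity of $h\mapsto h''\cdot Y+2h'\cdot Y'$ on $H^2\cap H^1_0$ to pin down $h$. The filled-in details (the injectivity argument via $[h'\,g(Y,Y)]'=0$ and $\int_0^1\mathrm dt/g(Y,Y)\ne0$, and the quadrature solving \eqref{h}) are correct.

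One caveat on the pairing step: with the derivative term killed by $g(V_*',Y)-g(V_*,Y')$ being constant, what you actually derive is $-\lambda_*\,g_t^{(r)}(V_*,Y)=[h'\,g(Y,Y)]'$, which disagrees in sign with \eqref{h} as printed. This is almost certainly a misprint in the paper rather than a flaw in your reasoning — indeed the paper's own closing remark, that the two equations force $g(V_*'',Y)=g(V_*,Y'')$, only goes through with the corrected sign, since substituting the printed \eqref{h} into the $Y$-pairing of \eqref{differeq} leaves a residual $2[h'\,g(Y,Y)]'$ rather than $0$. You should flag the discrepancy explicitly instead of asserting the pairing "is exactly equation \eqref{h}".
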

\begin{proof}
By a boot-strap argument we see that if $V_*$ is an eigenvector then it is differentiable. Using equations
\eqref{roper} and \eqref{decom} we conclude easily that $V_*$ satisfies \eqref{differeq} and \eqref{h} if and only if
$V_*$ is an eigenvector with $\lambda_*$ as eigenvalue. Moreover, these equations imply that $g(V''_*,Y)=g(V_*,Y'')$, so that $V_*\in \mathfrak K_*$.
\end{proof}
We obtain an analogous result for $\mathfrak K_0$.
\begin{prop}\label{thm:eigenrho0}
A vector $V_0\in  \mathfrak K_0\cap\Hi^\rho(1)$ is an eigenvector for the restriction of $I_1$ to $\mathfrak K_0\cap\Hi^\rho(1)$ with eigenvalue $\lambda_0\in\C$ if and only if
$V_0\in H^2\big([0,1];\C^n\big)\cap\Hi^\rho(1)$, $g\big(V'_0(0),Y(0)\big)-g\big(V_0(0),Y'(0)\big)=0$ and the following differential equation is satisfied
\begin{equation}\label{differeq2}
-V''_0+RV_0-\lambda_0\cdot A^{-1}V_0=h'\cdot Y+2 h\cdot Y',
\end{equation}
where
\begin{equation}
h=-\frac{\lambda_0}{g(Y,Y)}\int_t^1 g^{(r)}_s(V_0,Y)\,\de s.
\end{equation}
\end{prop}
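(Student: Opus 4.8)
We would argue in close parallel with Proposition~\ref{difeq*}, with $\mathfrak K_0$ and $\mathfrak K_0^\bot$ now in the roles of $\mathfrak K_*$ and $\mathfrak K_*^\bot$. The first step is the reduction to a membership relation. By definition, $V_0\in\mathfrak K_0\cap\Hi^\rho(1)$ is an eigenvector with eigenvalue $\lambda_0$ exactly when $I_1(V_0,W)=\lambda_0\int_0^1 g_t^{(r)}(V_0,W)\,\de t$ for all $W\in\mathfrak K_0$. Testing against $W$ supported in $\left]0,1\right[$ and invoking the regularity theory for the Morse--Sturm operator — the same boot-strap already used in Proposition~\ref{difeq*} — upgrades $V_0$ to class $H^2$, so that $V_0\in D=H^2\big([0,1];\C^n\big)\cap\Hi^\rho(1)$. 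Since $I_1$ is represented on $D$ by the self-adjoint operator $\mathfrak J(V)=-AV''+ARV$ of \eqref{roper}, the eigenvector property becomes, exactly as in \eqref{decom},
\[\mathfrak J(V_0)-\lambda_0\,V_0\in\mathfrak K_0^\bot.\]

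The second step is to unwind this through the explicit description $\mathfrak K_0^\bot=\big\{h'\cdot AY+2h\cdot AY':h\in H^1\big([0,1];\C\big),\ h(1)=0\big\}$ obtained above. Writing $\mathfrak J(V_0)-\lambda_0 V_0=A\big(-V_0''+RV_0-\lambda_0 A^{-1}V_0\big)$ and applying the pointwise isomorphism $A^{-1}=A(\cdot,1)^{-1}$ turns the membership into equation \eqref{differeq2}, for some $h\in H^1$ with $h(1)=0$ still to be identified. To pin $h$ down, pair \eqref{differeq2} with $Y$ through $g$: using $g$-symmetry of $R$ and $Y''=RY$ one has $g\big(-V_0''+RV_0,Y\big)=-\big[g(V_0',Y)-g(V_0,Y')\big]'$, which vanishes because membership of $V_0$ in $\mathfrak K_0$ forces $g(V_0',Y)-g(V_0,Y')\equiv0$ (differentiate the defining relation of $\mathfrak K_0$, legitimate since $V_0\in H^2$), while the right-hand side pairs to $g\big(h'Y+2hY',Y\big)=\big[h\cdot g(Y,Y)\big]'$. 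One is thus left with a first-order linear ODE for $h$ whose right-hand side is a multiple of $g_t^{(r)}(V_0,Y)$; integrating it from $t$ to $1$ with the boundary value $h(1)=0$, and using that $g(Y,Y)<0$ on $[0,1]$ so the division is harmless, produces the displayed expression for $h$.

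For the converse, suppose $V_0\in\mathfrak K_0\cap\Hi^\rho(1)$ is of class $H^2$ and solves \eqref{differeq2} with this $h$. Since $h\in H^1$ and $h(1)=0$ (immediate from the formula and $g(Y,Y)<0$), the right-hand side $h'Y+2hY'$ of \eqref{differeq2} satisfies $A(h'Y+2hY')\in\mathfrak K_0^\bot$, hence $\mathfrak J(V_0)-\lambda_0 V_0=A\big(-V_0''+RV_0-\lambda_0 A^{-1}V_0\big)\in\mathfrak K_0^\bot$, so $V_0$ is an eigenvector with eigenvalue $\lambda_0$. The relation $g\big(V_0'(0),Y(0)\big)-g\big(V_0(0),Y'(0)\big)=0$ is nothing but $\big(V_0(0),V_0'(0)\big)\in\mathfrak J_0$, the value at $t=0$ of the identity $g(V_0',Y)-g(V_0,Y')\equiv0$ carried by membership in $\mathfrak K_0$; I would record it explicitly because, unlike the $\mathfrak K_*$ case, where the auxiliary function vanishes at \emph{both} endpoints, here $h(0)$ is free, so this endpoint constraint is genuinely part of the characterization. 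Correspondingly, the point demanding care is precisely this bookkeeping of boundary terms in the integration by parts relating the $H^1$ form $I_1$ to its $L^2$ representative $\mathfrak J$ on $D$: one must check that the derivative boundary condition implicit in $D$, together with the absence of a condition on $h(0)$, is reflected exactly by the endpoint relation $\big(V_0(0),V_0'(0)\big)\in\mathfrak J_0$, and that no further compensating term survives.
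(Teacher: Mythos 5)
Your proposal is correct and follows exactly the route the paper compresses into its one-line proof (``Similar to Proposition~\ref{difeq*}''): boot-strap regularity to land in $H^2\big([0,1];\C^n\big)\cap\mathcal H^\rho(1)$, reformulation of the eigenvector condition as $\mathfrak J(V_0)-\lambda_0 V_0\in\mathfrak K_0^\perp$, translation via the explicit parametrization of $\mathfrak K_0^\perp$ into \eqref{differeq2}, and determination of $h$ by pairing with $Y$ through $g$ and integrating from $t$ to $1$. Your remark that the endpoint relation $g\big(V_0'(0),Y(0)\big)-g\big(V_0(0),Y'(0)\big)=0$ is a genuine part of the characterization here --- because $\mathfrak K_0^\perp$ only forces $h(1)=0$, so the ODE for $g(V_0',Y)-g(V_0,Y')$ obtained by $g$-pairing \eqref{differeq2} with $Y$ needs an initial value to integrate to zero, in contrast with the $\mathfrak K_*$ case where $h$ vanishes at both endpoints --- is a correct and useful clarification that the paper leaves implicit. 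One small point worth double-checking: carrying the pairing through with care, $-\big[g(V_0',Y)-g(V_0,Y')\big]'-\lambda_0\,g_t^{(r)}(V_0,Y)=\big[h\,g(Y,Y)\big]'$ and $h(1)=0$ yield $h=+\tfrac{\lambda_0}{g(Y,Y)}\int_t^1 g_s^{(r)}(V_0,Y)\,\de s$, which differs in sign from the formula displayed in the Proposition (the same sign question appears in equation \eqref{h} of Proposition~\ref{difeq*}); this does not affect your argument's structure, but the converse direction genuinely depends on which sign is used, since only one choice makes the pairing collapse to $\big[g(V_0',Y)-g(V_0,Y')\big]'=0$.
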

\begin{proof}
Similar to Propostion \ref{difeq*}.
\end{proof}
Setting $\lambda_0=0$ in Proposition~\ref{thm:eigenrho0}, one obtains that the elements in the kernel of the restriction of $I_1$ to $\mathfrak K_0\cap\Hi^\rho(1)$
are solutions of the Morse--Sturm system \eqref{eq:defMorseSturm}. This statement is made more precise in the following:
\begin{cor}\label{thm:corstimanu}
$\Ker \big[I_1\big|_{\Hi_0^1(1)\times\Hi_0^1(1)}\big]=\Ker\big[I_1\big|_{\Hi_*^1(1)\times \Hi_*^1(1)}\big]\cap\Hi_0^1(1)$, while
for $\rho\ne1$, $\Ker \big[I_1\big|_{\Hi_0^\rho(1)\times\Hi_0^\rho(1)}\big]=\Ker\big[I_1\big|_{\Hi_*^\rho(1)\times \Hi_*^\rho(1)}\big]$.
In particular:
\begin{equation}\label{eq:corstimanu}
\nu_0(1,1)\le\nu_*(1,1)\le\nu_0(1,1)+1,\quad \nu_0(\rho,1)=\nu_*(\rho,1)\ \text{for $\rho\ne1$}.
\end{equation}
\end{cor}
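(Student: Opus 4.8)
The plan is to deduce the two kernel identities directly from Propositions~\ref{difeq*} and \ref{thm:eigenrho0} with $\lambda_*=\lambda_0=0$, and then to read off the two nullity inequalities. First I would set $\lambda_0=0$ in Proposition~\ref{thm:eigenrho0}: the auxiliary function $h$ becomes identically zero, so the right-hand side $h'\cdot Y+2h\cdot Y'$ vanishes, and \eqref{differeq2} reduces to $-V_0''+RV_0=0$, i.e. $V_0$ is a $C^2$-solution of the Morse--Sturm system \eqref{eq:defMorseSturm}. Thus $\Ker\big[I_1|_{\Hi_0^\rho(1)\times\Hi_0^\rho(1)}\big]$ consists exactly of solutions $V_0$ of \eqref{eq:defMorseSturm} lying in $\Hi^\rho(1)$ and satisfying $g\big(V_0'(0),Y(0)\big)-g\big(V_0(0),Y'(0)\big)=0$; by the conservation law \eqref{eq:conservazione} together with $TY(1)=Y(0)$, $TY'(1)=Y'(0)$ and the $g$-symmetry of $T$, this vanishing at $t=0$ propagates to all $t\in[0,1]$, so such $V_0$ automatically lie in $\Hi_0^\rho(1)$ (indeed in the classical sense, not merely a.e.). Likewise, setting $\lambda_*=0$ in Proposition~\ref{difeq*} forces $h$ constant, hence $h\equiv0$ by the boundary conditions $h(0)=h(1)=0$, so \eqref{differeq} again reduces to the Morse--Sturm equation and $\Ker\big[I_1|_{\Hi_*^\rho(1)\times\Hi_*^\rho(1)}\big]$ is the space of solutions of \eqref{eq:defMorseSturm} in $\Hi^\rho(1)$ whose Wronskian-type quantity $g(V',Y)-g(V,Y')$ is constant; but for any two solutions this quantity is automatically constant by \eqref{eq:conservazione}, so in fact $\Ker\big[I_1|_{\Hi_*^\rho(1)\times\Hi_*^\rho(1)}\big]$ is the full space of solutions of \eqref{eq:defMorseSturm} lying in $\Hi^\rho(1)$.

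Comparing the two descriptions: an element of $\Ker\big[I_1|_{\Hi_*^\rho(1)\times\Hi_*^\rho(1)}\big]$ lies in the $\rho$-kernel for $\Hi_0$ if and only if the constant $C_V=g(V'(0),Y(0))-g(V(0),Y'(0))$ is zero. For $\rho\ne1$, Corollary~\ref{thm:corker0ker*} shows this constant is forced to vanish (the boundary relation $T^NV(1)=\rho^NV(0)$ with the $g$-symmetry of $T$ gives $C_V=\rho C_V$, hence $C_V=0$ since $\rho\ne1$); so the two kernels coincide and $\nu_0(\rho,1)=\nu_*(\rho,1)$. For $\rho=1$ the constraint $C_V=0$ cuts out $\Ker\big[I_1|_{\Hi_*^1(1)}\big]\cap\Hi_0^1(1)$ as a subspace of codimension at most one inside $\Ker\big[I_1|_{\Hi_*^1(1)}\big]$ (it is the kernel of the linear functional $V\mapsto C_V$), which is exactly the asserted identity $\Ker\big[I_1|_{\Hi_0^1(1)\times\Hi_0^1(1)}\big]=\Ker\big[I_1|_{\Hi_*^1(1)\times\Hi_*^1(1)}\big]\cap\Hi_0^1(1)$ together with $\nu_0(1,1)\le\nu_*(1,1)\le\nu_0(1,1)+1$.

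I do not expect a genuine obstacle here; the content is entirely bookkeeping built on the two preceding propositions. The one point requiring a little care is the regularity/almost-everywhere versus classical distinction: the spaces $\Hi_*^\rho(1)$ and $\Hi_0^\rho(1)$ are defined by a.e. conditions on $H^1$ fields, whereas the kernel elements produced by Propositions~\ref{difeq*} and \ref{thm:eigenrho0} are genuine $C^2$ solutions (by the boot-strap argument already invoked there), so one must check that membership in $\Hi_0^\rho(1)$ for such a smooth solution is equivalent to the pointwise vanishing of $g(V',Y)-g(V,Y')$, which is immediate from continuity and \eqref{eq:conservazione}. Granting that, the codimension-one comparison and the conjugation symmetry from Proposition~\ref{thm:prop2.15}(c) (if one also wants the analogous statement at $\bar\rho$) finish the argument.
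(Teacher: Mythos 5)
Your proof takes essentially the same route as the paper: set $\lambda_0=0$ in Proposition~\ref{thm:eigenrho0} so that \eqref{differeq2} degenerates to the Morse--Sturm system, identify the $\mathcal H_*$-kernel with the space of Jacobi-type solutions (the paper does this via Proposition~\ref{thm:ker*} rather than Proposition~\ref{difeq*}, but this is cosmetic), invoke Corollary~\ref{thm:corker0ker*} to get $C_V=0$ when $\rho\ne1$, and read off the inequalities from the codimension-$1$ inclusion $\mathcal H_0^\rho(1)\subset\mathcal H_*^\rho(1)$. Two tiny imprecisions, neither of which affects the conclusion: (i) when you set $\lambda_*=0$ in \eqref{h}, what is forced constant is $h'\cdot g(Y,Y)$, not $h$ itself; the constant is then killed by $h(0)=h(1)=0$ together with $\int_0^1 g(Y,Y)^{-1}\,\mathrm dr\ne0$, giving $h\equiv0$; and (ii) describing $\Ker\big[I_1|_{\mathcal H_*^\rho(1)}\big]$ as ``the full space of solutions of \eqref{eq:defMorseSturm} lying in $\mathcal H^\rho(1)$'' suppresses the derivative boundary condition $TV'(1)=\rho V'(0)$ appearing in \eqref{eq:nucleo}, which is also part of the characterization in Proposition~\ref{thm:ker*} (and is needed in the conservation-law step that propagates $C_V=0$ from $t=0$ to all $t$).
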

\begin{proof}
For $\lambda_0=0$, equation \eqref{differeq2} is the Morse--Sturm system \eqref{eq:defMorseSturm}; the conclusion follows
easily from Corollary~\ref{thm:corker0ker*}.
The second inequality in \eqref{eq:corstimanu} follows from the fact that $\Hi_0^\rho(1)$ has codimension
$1$ in $\Hi_*^\rho(1)$.
\end{proof}
\begin{cor}
If $\mathcal A\subset\mathds S^1\setminus\{1\}$ is a connected subset that does not contain elements in the spectrum of $\mathfrak P$, then
the map $\lambda_0(\rho,1)$ is constant on $\mathcal A$.
\end{cor}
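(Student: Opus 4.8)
The plan is to deduce the statement directly from the machinery already assembled, specialized to $N=1$. Since $\lambda_0(\rho,1)$ is by definition the index of $I_1$ on $\mathcal H_0^\rho(1)$, and Corollary~\ref{thm:contindex} asserts precisely that this index is locally constant on any connected set of circle parameters where $I_1$ is nondegenerate on $\mathcal H_0^\rho(1)$ — with the proviso that, in the singular case, that set must avoid the $N$-th roots of unity — the whole argument reduces to checking this nondegeneracy hypothesis on $\mathcal A$.

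First I would show that $I_1$ is nondegenerate on $\mathcal H_0^\rho(1)$ for every $\rho\in\mathcal A$, i.e.\ that $\nu_0(\rho,1)=0$. Because $\mathcal A$ does not meet $\{1\}$, Corollary~\ref{thm:corstimanu} gives $\nu_0(\rho,1)=\nu_*(\rho,1)$ for every $\rho\in\mathcal A$, while Proposition~\ref{thm:prop2.15}(b) identifies $\nu_*(\rho,1)$ with $\Dim\big(\Ker(\mathfrak P-\rho)\big)$. Since by hypothesis $\rho$ lies outside the spectrum of $\mathfrak P$, the operator $\mathfrak P-\rho$ is invertible, and hence $\nu_0(\rho,1)=0$.

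It then remains to invoke Corollary~\ref{thm:contindex} with $N=1$ for the connected set $\mathcal A$: the restriction of $I_1$ to $\mathcal H_0^\rho(1)$ is nondegenerate for all $\rho\in\mathcal A$, and since $\mathcal A$ contains no first root of unity (that is, $1\notin\mathcal A$), the extra requirement imposed in the singular case is automatically met, so the conclusion holds uniformly whether or not $Y$ is a singular solution of \eqref{eq:defMorseSturm}. This yields that $\rho\mapsto\lambda_0(\rho,1)$ is constant on $\mathcal A$. I do not anticipate a genuine obstacle here: the only delicate point is that, for singular $Y$, the family $\{\mathcal H_0^\rho(1)\}$ fails to be continuous at $1$ (cf.\ Corollary~\ref{thm:cor2.9}), which is exactly why the hypothesis $\mathcal A\subset\mathds S^1\setminus\{1\}$ is built into the statement; once one stays away from $1$, the semicontinuity and analyticity results of Section~\ref{sec:morsesturm} apply verbatim, and the proof is a straightforward chaining of the facts just cited.
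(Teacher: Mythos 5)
Your proof is correct and follows essentially the same route as the paper's: identify $\nu_*(\rho,1)=0$ on $\mathcal A$ from the spectral hypothesis via Proposition~\ref{thm:prop2.15}(b), transfer this to $\nu_0(\rho,1)=0$ via Corollary~\ref{thm:corstimanu} using $\rho\ne1$, and then apply Corollary~\ref{thm:contindex}. The paper merely states this more tersely; your additional remarks on the singular case and the role of excluding $1$ are accurate and explain exactly why the proviso in Corollary~\ref{thm:contindex} is satisfied.
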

\begin{proof}
The assumption is that $\nu_*(\rho,1)$ vanishes on $\mathcal A$, which by Corollary~\ref{thm:corstimanu}
implies that also $\nu_0(\rho,1)$ vanishes on $\mathcal A$. Corollary~\ref{thm:contindex} concludes the thesis.
\end{proof}
\end{section}

\begin{section}{On the index sequences}
\label{sec:sequence}
\subsection{A Fourier theorem}
In the following, given $\rho\in\mathds S^1$ and $w\in\mathcal H_*^\rho(1)$,
it will be useful to consider the (continuous) extension $w:\R\to\C^n$ of
$w$ defined by:
\begin{equation}\label{eq:extension}
\phantom{\qquad \forall\,t\in\left[0,1\right[,\ \forall\,N\in\Z.}w(t+N)=\rho^NT^{-N}w(t),\qquad \forall\,t\in\left[0,1\right[,\ \forall\,N\in\Z.
\end{equation}
Observe that such extension does not satisfy $g(w',Y)-g(w,Y')=\text{const.}$ in $\R$, unless $\rho=1$ or
$w\in\mathcal H_0^\rho(1)$.
\begin{prop}
For all $N\ge1$, set $\omega=e^{2\pi i/N}$ and define the map
\[\Psi_N:\mathcal H_*^1(N)\to\Big[\bigoplus\nolimits_{k=1}^{N-1}\mathcal H_0^{\omega^k}(1)\Big]\oplus \mathcal H_*^1(1)\]
by:
\[V\longmapsto (V_1,\ldots,V_N)\]
where \begin{equation}\label{eq:defVk}
V_k(t)=\frac1N\sum_{j=0}^{N-1}\omega^{-kj}T^{j}V\left(\frac{t+j}N\right),\end{equation}
for all $t\in\left[0,1\right[$.
Then, $\Psi_N$ is a linear isomorphism, whose inverse
\[\Upsilon_N:\Big[\bigoplus\nolimits_{k=1}^{N-1}\mathcal H_0^{\omega^k}(1)\Big]\oplus \mathcal H_*^1(1)\longrightarrow \mathcal H_*^1(N)\]
is given by $(V_1,\ldots,V_N)\mapsto V$,
where\footnote{In equation \eqref{eq:inversePsiN} we are assuming that all the $V_k$'s have been extended to $\R$
as in \eqref{eq:extension}. An immediate calculation  shows that if $V$ and the $V_k$'s are extended to $\R$
according to \eqref{eq:extension}, then equality \eqref{eq:defVk} holds for all $t\in\R$.}
\begin{equation}\label{eq:inversePsiN}
V(t)=\sum_{k=1}^NV_k(tN),\end{equation} for all $t\in[0,1]$. The isomorphism $\Psi_N$ carries $\mathcal H^1_0(N)$ onto
the direct sum $\bigoplus\limits_{k=1}^N\mathcal H_0^{\omega^k}(1)$.
\end{prop}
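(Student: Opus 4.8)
The map $\Psi_N$ is a discrete Fourier transform along the tower of iterates, and the plan is to verify by hand that $\Psi_N$ and the candidate inverse $\Upsilon_N$ of \eqref{eq:inversePsiN} are well defined and mutually inverse. Throughout I would fix the canonical extensions to $\R$ prescribed by \eqref{eq:extension}: for $V\in\mathcal H_*^1(N)$ one has $V(t+m)=T^{-Nm}V(t)$, for $Y$ one has $Y(t+m)=T^{-m}Y(t)$, and for $V_k\in\mathcal H^{\omega^k}(1)$ one has $V_k(t+m)=\omega^{km}T^{-m}V_k(t)$ ($t\in[0,1)$, $m\in\Z$); the footnote's claim that \eqref{eq:defVk} persists for all $t\in\R$ will come out of the same computation.

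First I would check that $\Psi_N$ lands in the asserted direct sum. Each summand $t\mapsto T^jV\big((t+j)/N\big)$ in \eqref{eq:defVk} is the restriction of $V$ to $[j/N,(j+1)/N]$, affinely reparametrized, hence lies in $H^1$, so $V_k\in\mathcal H$ with no need to extend $V$ on $[0,1)$. Evaluating \eqref{eq:defVk} at $t=0$ and (by continuity) at $t=1$ and re-indexing $i=j+1$, the relation $\omega^N=1$ together with the boundary condition $T^NV(1)=V(0)$ collapses the $i=N$ term into the $i=0$ term and gives $TV_k(1)=\omega^kV_k(0)$, i.e.\ $V_k\in\mathcal H^{\omega^k}(1)$. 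For the $\mathcal H_*$/$\mathcal H_0$ condition I would differentiate \eqref{eq:defVk}, move the $T^j$'s across $g$ by $T$-invariance of $g$, and invoke the key identity $T^{-j}Y(t)=Y_N\big((t+j)/N\big)$ (immediate from $Y(t+j)=T^{-j}Y(t)$ and $Y_N=Y(N\,\cdot\,)$) together with its derivative $T^{-j}Y'(t)=\tfrac1N Y_N'\big((t+j)/N\big)$; this yields
\[
g(V_k',Y)-g(V_k,Y')\;=\;\frac{1}{N^2}\sum_{j=0}^{N-1}\omega^{-kj}\Big[g(V',Y_N)-g(V,Y_N')\Big]_{(t+j)/N}\;=\;\frac{C_V}{N^2}\sum_{j=0}^{N-1}\omega^{-kj},
\]
where the last step uses $V\in\mathcal H_*^1(N)$. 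Since $\sum_{j=0}^{N-1}\omega^{-kj}$ equals $N$ when $N\mid k$ and $0$ otherwise, this shows $V_k\in\mathcal H_0^{\omega^k}(1)$ for $1\le k\le N-1$ and $V_N\in\mathcal H_*^1(1)$ with $C_{V_N}=C_V/N$; in particular $\Psi_N$ maps $\mathcal H_0^1(N)$ (where $C_V=0$) into $\bigoplus_{k=1}^N\mathcal H_0^{\omega^k}(1)$.

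Next I would check that $\Upsilon_N$ is well defined: given $(V_1,\dots,V_N)$ in the direct sum, the extended $V_k$ is continuous across each integer $m\in\{1,\dots,N-1\}$ precisely because the matching condition $T^{-(m-1)}V_k(1)=\omega^kT^{-m}V_k(0)$ is equivalent to the defining relation $TV_k(1)=\omega^kV_k(0)$ of $\mathcal H^{\omega^k}(1)$; hence $t\mapsto V_k(tN)\in H^1$ and $V:=\sum_kV_k(N\,\cdot\,)\in\mathcal H$. The identity $V_k(N)=\omega^{kN}T^{-N}V_k(0)=T^{-N}V_k(0)$ gives $T^NV(1)=V(0)$, and reversing the pairing computation above (write $tN=m+\sigma$, use $T$-invariance of $g$ and the extension rules for $V_k$ and $Y$) gives $g(V',Y_N)-g(V,Y_N')=N\,C_{V_N}$, which is constant; so $V\in\mathcal H_*^1(N)$, and $V\in\mathcal H_0^1(N)$ whenever all the $V_k$ lie in $\mathcal H_0^{\omega^k}(1)$. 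Finally, substituting \eqref{eq:defVk} into \eqref{eq:inversePsiN} and conversely, the cross terms collapse by the orthogonality relations $\sum_{k=1}^N\omega^{-kj}=N$ if $N\mid j$ and $0$ otherwise, and $\sum_{j=0}^{N-1}\omega^{(l-k)j}=N$ if $N\mid(l-k)$ and $0$ otherwise (the translates of $V$, resp.\ of the $V_l$, being handled by the extension rules); this gives $\Upsilon_N\circ\Psi_N=\mathrm{id}$ and $\Psi_N\circ\Upsilon_N=\mathrm{id}$, and simultaneously proves the footnote's claim. Together with the $C_V$-bookkeeping, this shows $\Psi_N$ is a linear isomorphism carrying $\mathcal H_0^1(N)$ onto $\bigoplus_{k=1}^N\mathcal H_0^{\omega^k}(1)$.

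I expect no analytic obstacle here — unlike Proposition~\ref{thm:prop2.10}, this statement needs no compactness or Fredholm input, only that restriction/rescaling preserves $H^1$ and that finite Fourier sums invert. The one point requiring care is bookkeeping: keeping the three twisted-periodic extensions to $\R$ straight, and checking that the powers of $T$ and of $\omega$ match at every re-indexing $j\mapsto j\pm 1$ — in particular confirming that it is the $T^{-N}$ (not $T^{-1}$) twist in the extension of $V\in\mathcal H_*^1(N)$ that makes \eqref{eq:defVk} consistent for all $t\in\R$.
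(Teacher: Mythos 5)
Your proposal is correct and follows essentially the same route as the paper: identify the three twisted-periodic extensions, verify the boundary condition $TV_k(1)=\omega^kV_k(0)$ by a re-indexing, compute $g(V_k',Y)-g(V_k,Y')$ via the key identity $T^{-j}Y(t)=Y_N((t+j)/N)$ (the paper phrases the same fact as $Y_N(s_j-j/N)=T^jY_N(s_j)$), and then invert by the orthogonality relation for $N$-th roots of unity. The one place you go slightly further than the paper is in spelling out that the twisted extension of each $V_k$ is continuous across the integers, so that $\Upsilon_N$ genuinely lands in $H^1$ — the paper leaves that implicit and relies on the direct verification $\Upsilon_N\circ\Psi_N=\mathrm{id}$ and $\Psi_N\circ\Upsilon_N=\mathrm{id}$.
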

\begin{proof}
The proof is a matter of direct calculations, based on a repeated
use of the identity:
\[\sum_{r=0}^{N-1}\omega^{sr}=\begin{cases}0,&\text{if $s\not\equiv0\mod N$;}\\ N,&\text{if $s\equiv0\mod N$.}\end{cases}\]
The details of the computations are as follows.
Clearly, $\Psi_N$ is linear and bounded. Let us show that it is well defined, i.e.,
that for all $k=1,\ldots,N-1$, the map $V_k$ is in $\mathcal H_0^{\omega^k}(1)$ and that $V_N\in \mathcal H_*^{\omega^k}(1)$. We compute:
\begin{multline*}\omega^{-k}TV_k(1)=\frac1N\sum_{j=0}^{N-1}\omega^{-k(j+1)}T^{j+1}V(\tfrac{j+1}N)=
\frac1N\sum_{j=1}^{N}\omega^{-kj}T^jV(j/N)\\ =\frac1N\left(\sum_{j=1}^{N-1}\omega^{-kj}T^jV(j/N)+T^NV(1)\right)=
 \frac1N\left(\sum_{j=1}^{N-1}\omega^{-kj}T^jV(j/N)+V(0)\right)\\=\frac1N\sum_{j=0}^{N-1}\omega^{-kj}T^{j}V(j/N)=V_k(0).
\end{multline*}
Moreover, for $t\in\left[0,1\right[$, setting $s_j=(t+j)/N$, $j=0,\ldots,N-1$
and recalling formula \eqref{eq:periodYNeRN}:
\begin{multline*}
g\big(V_k'(t),Y(t)\big)-g\big(V_k(t),Y'(t)\big)\\=\frac1N\sum_{j=0}^{N-1}\omega^{-kj}\left[\tfrac1Ng\big(T^{j}V'(\tfrac{t+j}N),Y(t)\big)
-g\big(T^{j}V(\tfrac{t+j}N),Y'(t)\big)\right]\\=
\frac1N\sum_{j=0}^{N-1}\omega^{-kj}\left[\tfrac1Ng\big(T^{j}V'(s_j),Y_N(s_j-\tfrac{j}N)\big)
-\tfrac1Ng\big(T^{j}V(s_j),Y_N'(s_j-\tfrac{j}N)\big)\right]
\\=
\frac1{N^2}\sum_{j=0}^{N-1}\omega^{-kj}\left[g\big(T^{j}V'(s_j),T^{j}Y_N(s_j)\big)
-g\big(T^{j}V(s_j),T^{j}Y_N'(s_j)\big)\right]\\
=
\frac1{N^2}\sum_{j=0}^{N-1}\omega^{-kj}\left[g\big(V'(s_j),Y_N(s_j)\big)
-g\big(V(s_j),Y_N'(s_j)\big)\right]\\=\frac{C_V}{N^2}\sum_{j=0}^{N-1}\omega^{-kj}=\begin{cases}0,&\text{if $k<N$;}\\ \frac1N{C_V},&\text{if $k=N$.}\end{cases}
\end{multline*}
This proves that $V_k\in\mathcal H_0^{\omega^k}(1)$ for $k<N$ and that $V_N\in\mathcal H_*^1(1)$. In order to conclude the proof it
remains to verify that \eqref{eq:inversePsiN} defines an inverse for $\Psi_N$.
This is also a straightforward calculation. Given $V\in\mathcal H_*^1(N)$, then:
\[\frac1N\sum_{k=1}^N\sum_{j=0}^{N-1}\omega^{-kj}T^jV\big(\tfrac{tN+j}N\big)=\frac1N\sum_{j=0}^{N-1}T^jV\big(\tfrac{tN+j}N\big)\left(\sum_{k=1}^N\omega^{-kj}\right)=
V(t),\]
which proves that $\Upsilon_N\circ\Psi_N$ is the identity.

Conversely, given $(V_1,\ldots,V_N)\in \Big[\bigoplus\nolimits_{k=1}^{N-1}\mathcal H_0^{\omega^k}(1)\Big]\oplus \mathcal H_*^1(1)$,
$k\in\{1,\ldots,N\}$ and $t\in\left[0,1\right[$:
\begin{multline*}
\frac1N\sum_{j=0}^{N-1}\omega^{-kj}T^j\sum_{l=1}^NV_l(t+j)=\frac1N\sum_{j=0}^{N-1}\omega^{-kj}\sum_{l=1}^N\omega^{lj}V_l(t)
=\frac1N\sum_{j=0}^{N-1}\sum_{l=1}^N\omega^{j(l-k)}V_l(t)\\=\frac1N\sum_{l=1}^NV_l(t)\left[\sum_{j=0}^{N-1}\omega^{j(l-k)}\right]=V_k(t),
\end{multline*}
which proves that $\Psi_N\circ \Upsilon_N$ is the identity. This concludes the proof.
\end{proof}
Finally, the desired result on the index sequences:
\begin{prop}[Fourier theorem]\label{thm:prop4.2}
For all $N\ge1$, the following identities hold:
\begin{eqnarray*}
&\lambda_*(1,N)=\lambda_*(1,1)+\sum_{k=1}^{N-1}\lambda_0(\omega^k,1),\\
&\lambda_0(1,N)=\sum_{k=1}^N\lambda_0(\omega^k,1),
\end{eqnarray*}
where $\omega=e^{2\pi i/N}$.
\end{prop}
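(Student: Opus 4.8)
The plan is to exploit the linear isomorphism $\Psi_N\colon\mathcal H_*^1(N)\to\big[\bigoplus_{k=1}^{N-1}\mathcal H_0^{\omega^k}(1)\big]\oplus\mathcal H_*^1(1)$ constructed in the preceding proposition, showing that it carries the index form $I_N$, up to a positive constant, onto the orthogonal direct sum of copies of $I_1$ on the factors. Concretely, writing $\Psi_N(V)=(V_1,\dots,V_N)$ and $\Psi_N(W)=(W_1,\dots,W_N)$, with each $V_k,W_k$ extended to $\R$ as in \eqref{eq:extension}, the claim is that
\[I_N(V,W)=N^2\sum_{k=1}^N I_1(V_k,W_k).\]
Granting this, both formulas follow immediately. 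The index of a Hermitian form is unchanged by a linear isomorphism and by multiplication by the positive constant $N^2$, and it is additive over orthogonal direct sums of forms of finite index; since $I_1$ restricted to each $\mathcal H_0^{\omega^k}(1)$ and to $\mathcal H_*^1(1)$ has finite index by Proposition~\ref{thm:prop2.10}, one gets $\lambda_*(1,N)=\sum_{k=1}^{N-1}\lambda_0(\omega^k,1)+\lambda_*(1,1)$. For the second identity, recall from the last assertion of the preceding proposition that $\Psi_N$ restricts to an isomorphism of $\mathcal H_0^1(N)$ onto $\bigoplus_{k=1}^N\mathcal H_0^{\omega^k}(1)$; the displayed identity holds a fortiori on this subspace, so $\lambda_0(1,N)=\sum_{k=1}^N\lambda_0(\omega^k,1)$, the term $k=N$ being $\lambda_0(1,1)$ since $\omega^N=1$.

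To prove the key identity I would first \emph{unfold} $I_N$ via the substitution $u=tN$. Putting $\widetilde V(u)=V(u/N)$ for $u\in[0,N]$, so that $V(t)=\widetilde V(tN)$ and $V'(t)=N\,\widetilde V'(tN)$, formula \eqref{eq:defIndexformCn} together with $R_N(t)=R(tN)$ yields directly
\[I_N(V,W)=N\int_0^N\big[g(\widetilde V',\widetilde W')+g(R\,\widetilde V,\widetilde W)\big]\,\mathrm du,\]
where $R$ is extended to $[0,N]$ by \eqref{eq:2.1b}. By \eqref{eq:inversePsiN} one has $\widetilde V=\sum_{k=1}^N V_k$ on $[0,N]$, and likewise for $\widetilde W$, so this integral becomes $\sum_{l,k=1}^N\int_0^N\big[g(V_l',W_k')+g(R\,V_l,W_k)\big]\,\mathrm du$. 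Breaking each term as $\sum_{j=0}^{N-1}\int_j^{j+1}$, substituting $u=v+j$ with $v\in[0,1]$, and using the quasi-periodicity $V_l(v+j)=\omega^{lj}T^{-j}V_l(v)$ and $W_k(v+j)=\omega^{kj}T^{-j}W_k(v)$ from \eqref{eq:extension}, the relation $R(v+j)=T^{-j}R(v)T^j$ from \eqref{eq:2.1b}, the $g$-invariance of $T$, and the conjugation $\overline{\omega^{kj}}=\omega^{-kj}$ forced by the sesquilinearity of $g$, one finds that the $j$-th contribution equals $\omega^{(l-k)j}\,I_1(V_l,W_k)$. Summing over $j$ and invoking $\sum_{j=0}^{N-1}\omega^{(l-k)j}=N$ if $l=k$ and $0$ otherwise (valid since $l,k\in\{1,\dots,N\}$), only the diagonal terms survive, giving $I_N(V,W)=N\cdot N\sum_{k=1}^N I_1(V_k,W_k)$, as claimed.

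I do not expect a genuine conceptual obstacle here: exactly as with the construction of $\Psi_N$, the whole argument is a lengthy but essentially mechanical computation resting on the orthogonality of the $N$-th roots of unity. The step requiring the most care is the unfolding, where one must keep consistent track of the various extensions to $\R$, of the periodicity relations \eqref{eq:periodYNeRN} and \eqref{eq:2.1b}, and of the complex conjugations produced by the sesquilinearity of $g$. The only non-formal ingredient is the elementary fact that the index of a Hermitian form is additive over orthogonal direct sums and stable under isomorphism and positive rescaling; together with the finiteness of all the indices in question, guaranteed by Proposition~\ref{thm:prop2.10}, this converts the intertwining identity into the stated identities between non-negative integers.
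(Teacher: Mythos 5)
Your proof is correct and takes essentially the same route as the paper's: unfold $I_N$ through the isomorphism $\Psi_N$, split the integral into $N$ pieces, apply the quasi-periodicity relations \eqref{eq:extension} and \eqref{eq:2.1b} together with the $g$-invariance of $T$, and let the orthogonality of the $N$-th roots of unity annihilate the off-diagonal terms. Your intertwining identity $I_N(V,W)=N^2\sum_{k=1}^{N}I_1(V_k,W_k)$ is in fact the correct one---the paper's last equality in the chain drops the factor $N^2$ produced by $\sum_{l=1}^{N}\omega^{(k-r)(l-1)}=N\,\delta_{kr}$, a harmless slip since multiplication by a positive constant leaves the index of a Hermitian form unchanged.
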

\begin{proof}
The result is obtained by showing that, given $V_k,W_k\in\mathcal H_0^{\omega^k}(1)$,
$k=1,\ldots,N-1$, $V_N,W_N\in\mathcal H_*^1(1)$,
and setting $V=\Psi_N^{-1}(V_1,\ldots,V_N)$, $W=\Psi_N^{-1}(W_1,\ldots,W_N)$,
the following identity holds:
\[I_N(V,W)=\sum_{k=1}^NI_1(V_k,W_k).\]
This is obtained by a direct calculation, keeping in mind that:
\begin{itemize}
\item $V_k(s+l-1)=\omega^{k(l-1)}T^{1-l}V_k(s)$, $V_k'(s+l-1)=\omega^{k(l-1)}T^{1-l}V_k'(s)$;
\item $W_k(s+l-1)=\omega^{k(l-1)}T^{1-l}W_k(s)$, $W_k'(s+l-1)=\omega^{k(l-1)}T^{1-l}W_k'(s)$;
\item $R(s+l-1)=T^{1-l}R(s)T^{l-1}$,
\end{itemize}
 for all $s\in[0,1]$. Then:
 \begin{multline*}
 I_N(V,W)=N^2\sum_{k,r=1}^N\int_0^1\Big[g\big(V_k'(tN),W_r'(tN)\big)+g\big(R(tN)V_k(tN),W_r(tN)\big)\Big]\,\mathrm dt
\displaybreak[0]\\=N^2\sum_{k,r=1}^N\sum_{l=1}^N\int_{\frac{l-1}N}^{\frac lN}\Big[g\big(V_k'(tN),W_r'(tN)\big)+g\big(R(tN)V_k(tN),W_r(tN)\big)\Big]\,\mathrm dt
\displaybreak[0]\\
= N\sum_{k,r=1}^N\sum_{l=1}^N\int_{l-1}^{l}\Big[g\big(V_k'(s),W_r'(s)\big)+g\big(R(s)V_k(s),W_r(s)\big)\Big]\,\mathrm ds
\displaybreak[0]\\
=\! N\!\!\sum_{k,r,l=1}^N\!\int_{0}^{1}\!\!\Big[g\big(V_k'(s+l-1),W_r'(s+l-1)\big)+g\big(R(s+l-1)V_k(s+l-1),W_r(s+l-1)\big)\!\Big]\mathrm ds
\displaybreak[0]\\
= N\sum_{k,r=1}^N\sum_{l=1}^N\omega^{(k-r)(l-1)}\int_{0}^{1}\Big[g\big(V_k'(s),W_r'(s)\big)+g\big(R(s)V_k(s),W_r(s)\big)\Big]\,\mathrm ds
\displaybreak[0]\\
=\sum_{k=1}^N\int_0^1\Big[g\big(V_k'(s),W_k'(s)\big)+g\big(R(s)V_k(s),W_k(s)\big)\Big]\,\mathrm ds=\sum_{k=1}^NI_1(V_k,W_k),
 \end{multline*}
 which concludes the proof.
\end{proof}
\subsection{On the jumps of the index function}\label{sub:jumpsindexfnctn}
The question of determining the value of the jumps of the index function $\rho\mapsto\lambda_0(\rho,1)$
is rather involved, and it will be treated only marginally in this subsection. Let us start with a simple
observation on the index of continuously varying essentially positive symmetric bilinear forms, whose
proof is omitted:
\begin{lem}\label{thm:lem4.3}
Let $B:\mathfrak H\times\mathfrak H\to\C$ be a Fredholm Hermitian form on the complex Hilbert space
$\mathfrak H$, and let $\{\mathcal D^t\}_{t\in I}$ be a continuous family of closed subspaces of
$\mathfrak H$ such that the restriction $B_t$ of $B$ to $\mathcal D^t\times\mathcal D^t$ is essentially
positive for all $t\in I$. If $t_0$ is an isolated instant in the interior of $I$ such that
$B_{t_0}$ is nondegenerate, then for $\varepsilon>0$ small
enough:
\[\left\vert\mathrm n_-\big(B_{t_0+\varepsilon}\big)-\mathrm n_-\big(B_{t_0-\varepsilon}\big)\right\vert
\le \Dim\big(\Ker(B_{t_0})\big).\qed\]
\end{lem}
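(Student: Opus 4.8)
The plan is to reduce the statement to a standard fact about continuously varying essentially positive Fredholm Hermitian forms, which is really just a local perturbation argument. Write $B_t$ for the restriction of $B$ to $\mathcal D^t\times\mathcal D^t$; by hypothesis each $B_t$ is essentially positive, and since $\{\mathcal D^t\}$ is a continuous family I would first trivialize it locally: for $t$ near $t_0$ pick an isomorphism $\alpha(t)\colon\mathcal D^t\to\overline{\mathcal D}$ with $\alpha(t_0)=\mathrm{Id}$, as provided by Definition~\ref{thm:defC1subspaces}. Then $\widetilde B_t(u,v)=B_t\big(\alpha(t)^{-1}u,\alpha(t)^{-1}v\big)$ is a continuous family of essentially positive Hermitian forms on the \emph{fixed} Hilbert space $\overline{\mathcal D}$, with $\mathrm n_-(\widetilde B_t)=\mathrm n_-(B_t)$ and $\Ker\widetilde B_{t_0}\cong\Ker B_{t_0}$. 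So it suffices to prove the inequality for a continuous path of essentially positive Fredholm Hermitian forms on one Hilbert space.

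Next I would represent $\widetilde B_t$ by bounded self-adjoint operators $S_t$ on $\overline{\mathcal D}$ via the fixed inner product; essential positivity means $S_t = P_t + K_t$ with $P_t$ a positive isomorphism and $K_t$ compact, and $\mathrm n_-(\widetilde B_t)$ is the number of negative eigenvalues of $S_t$ counted with multiplicity, which is finite. The key point is that $S_{t_0}$ being nondegenerate means $0\notin\sigma(S_{t_0})$; since $S_{t_0}$ is a compact perturbation of a positive isomorphism, its spectrum near $0$ consists of finitely many negative eigenvalues $\mu_1\le\cdots\le\mu_m<0$ (here $m=\mathrm n_-(\widetilde B_{t_0})$) and then a gap up to $\sigma(S_{t_0})\cap(0,\infty)$, which is bounded away from $0$. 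Choose $\delta>0$ so that the annulus $\{|\lambda|\le\delta\}$ contains no point of $\sigma(S_{t_0})$, and a slightly larger $\delta'$ with $(-\delta'-\eta,-\delta'+\eta)\cap\sigma(S_{t_0})=\emptyset$ for small $\eta$, separating the ``genuinely negative'' eigenvalues from everything near $0$.

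Now I use continuity of $t\mapsto S_t$ in operator norm together with upper semicontinuity of the spectrum: for $\varepsilon>0$ small, $\sigma(S_{t_0\pm\varepsilon})$ avoids the two separating circles $\{|\lambda|=\delta\}$ and $\{|\lambda|=\delta'\}$. The Riesz spectral projection onto the part of the spectrum inside a fixed contour depends continuously on the operator, so the total multiplicity of $\sigma(S_{t_0\pm\varepsilon})$ inside $\{|\lambda|<\delta\}$ equals that of $S_{t_0}$, which is $0$; hence no eigenvalues of $S_{t_0\pm\varepsilon}$ lie in $(-\delta,0)$, and all of $\mathrm n_-(\widetilde B_{t_0\pm\varepsilon})$ comes from eigenvalues in $(-\infty,-\delta]$. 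Meanwhile the spectral projection for the region $\{\mathrm{Re}\,\lambda<-\delta'\}$ (a bounded region, since $S_t$ is bounded below uniformly for $t$ near $t_0$) has rank exactly $\mathrm n_-(\widetilde B_{t_0})-\dim\Ker B_{t_0}$ at $t_0$ — no, more carefully: the eigenvalues in $(-\delta',-\delta)$ of $S_{t_0}$, together with those in $(-\infty,-\delta']$, total $\mathrm n_-(\widetilde B_{t_0})$; the projection onto $\{\mathrm{Re}\,\lambda\le-\delta\}$ has constant rank $\mathrm n_-(\widetilde B_{t_0})$ for $t$ near $t_0$ by continuity. The only eigenvalues that can migrate across the circle $\{|\lambda|=\delta\}$ as $t$ crosses $t_0$ are those which at $t_0$ sit exactly at $0$, i.e.\ the $\dim\Ker B_{t_0}$ of them; each contributes at most $1$ to $|\mathrm n_-(\widetilde B_{t_0+\varepsilon})-\mathrm n_-(\widetilde B_{t_0-\varepsilon})|$. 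Summing, the difference is bounded by $\dim\Ker B_{t_0}=\Dim\big(\Ker(B_{t_0})\big)$.

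The main obstacle is bookkeeping the spectral projections cleanly: one must ensure $S_t$ is \emph{uniformly} bounded below (so that the ``deep negative'' part of the spectrum stays in a fixed bounded set and its projection varies continuously), and one must invoke the right continuity statement for spectral projections along contours that avoid the spectrum — standard in Kato's perturbation theory but worth citing. Everything else is a routine count of eigenvalue multiplicities, which is why the lemma is stated with proof omitted.
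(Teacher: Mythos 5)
The hypothesis as printed must be a typo: it should say ``$B_{t_0}$ is \emph{degenerate}.'' Read literally, $\Ker(B_{t_0})$ is trivial, the right-hand side is $0$, and the conclusion is merely the local constancy of the index at a nondegeneracy instant --- which is already Corollary~\ref{thm:contindex}. The lemma is invoked in Corollary~\ref{thm:cor4.4} precisely at a \emph{discontinuity} of the index map, where the form is degenerate and the nullity is the claimed bound; that is the only reading that makes the statement nontrivial. (The paper omits the proof, so there is no printed argument to compare against.)

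Your proposal follows the literal hypothesis, and as a result it proves only the trivial version. After choosing $\delta$ so that $\{|\lambda|\le\delta\}$ avoids $\sigma(S_{t_0})$ and invoking continuity of Riesz projections, you have already established $\sigma(S_{t_0\pm\varepsilon})\cap\{|\lambda|<\delta\}=\emptyset$, which forces $\mathrm n_-(B_{t_0+\varepsilon})=\mathrm n_-(B_{t_0-\varepsilon})$; there is nothing left to bound. The closing appeal to ``the $\dim\Ker B_{t_0}$ eigenvalues that at $t_0$ sit exactly at $0$'' is inconsistent with your own setup, since you arranged $0\notin\sigma(S_{t_0})$, and the phrase ``migrate across the circle $\{|\lambda|=\delta\}$'' is also off: for $t$ near $t_0$ no eigenvalue of $S_t$ lies on that circle, so nothing crosses it; what moves is the finite cluster of spectrum \emph{inside} the disk. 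That said, the machinery you set up is exactly right for the intended statement and closes cleanly once the hypothesis is corrected: if $B_{t_0}$ is degenerate, then $0$ is an isolated eigenvalue of $S_{t_0}$ of finite multiplicity $d=\Dim\big(\Ker(B_{t_0})\big)$, because $S_{t_0}$ is a compact perturbation of a positive isomorphism and hence its spectrum below the positive essential infimum is finite; pick $\delta>0$ with $\sigma(S_{t_0})\cap[-\delta,\delta]=\{0\}$; continuity of the two Riesz projections (over the disk $\{|\lambda|<\delta\}$ and over $\{\mathrm{Re}\,\lambda<-\delta\}$) gives, for small $\varepsilon$, exactly $\mathrm n_-(B_{t_0})$ eigenvalues of $S_{t_0\pm\varepsilon}$ in $(-\infty,-\delta)$ and exactly $d$ in $(-\delta,\delta)$, whence $\mathrm n_-(B_{t_0\pm\varepsilon})$ lies between $\mathrm n_-(B_{t_0})$ and $\mathrm n_-(B_{t_0})+d$ and the jump is at most $d$. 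Your local trivialization $\alpha(t)$ and the passage to self-adjoint representatives $S_t$ are both fine as you set them up.
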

\begin{cor}\label{thm:cor4.4}
Let $e^{2\pi i\widetilde\theta}\in\mathds S^1\setminus\{1\}$ be a discontinuity point for the map $\rho\mapsto\lambda_0(\rho,1)$.
Then:
\[\left\vert\lim_{\theta\to0^+}\big[\lambda_0\big(e^{2\pi i(\widetilde\theta+\theta)},1\big)-\lambda_0\big(e^{2\pi i(\widetilde\theta-\theta)},1\big)\big]\right\vert\le
\nu_0\big(e^{2\pi i\widetilde\theta},1\big).\]
If $Y$ is not a singular solution of \eqref{eq:defMorseSturm}, the same conclusion holds also for $\widetilde\theta=0$.
\end{cor}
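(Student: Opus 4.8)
The plan is to realize $\rho\mapsto\lambda_0(\rho,1)$ as the negative index of a fixed Fredholm Hermitian form restricted to a continuous family of closed subspaces, and then to invoke Lemma~\ref{thm:lem4.3}. Write $\rho=e^{2\pi i\theta}$ for $\theta$ near $\widetilde\theta$, and take as ambient Hilbert space the fixed closed subspace $\mathfrak H=\mathcal H_*(1)\subset\mathcal H$. A minor variant of the computation in the proof of Proposition~\ref{thm:prop2.10} --- which uses only that $g(V',Y)-g(V,Y')$ is a.e.\ constant and that evaluation at the endpoints is $C^0$-continuous, but not the boundary condition defining $\mathcal H^\rho(1)$ --- shows that $B:=I_1\vert_{\mathcal H_*(1)\times\mathcal H_*(1)}$ is essentially positive, in particular a Fredholm Hermitian form; this is the $B$ to which Lemma~\ref{thm:lem4.3} will be applied.

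Next I would set $\mathcal D^\theta:=\mathcal H_0^{e^{2\pi i\theta}}(1)$ for $\theta$ in a suitably small open interval $I\ni\widetilde\theta$, chosen so that, unless $\widetilde\theta=0$, the arc $\{e^{2\pi i\theta}:\theta\in I\}$ does not contain $1$. By Proposition~\ref{thm:corcriteriosmoothness} when $Y$ is not singular (which is precisely where that hypothesis is needed, and what permits including $\widetilde\theta=0$), and by Corollary~\ref{thm:cor2.9} when $Y$ is singular (so $\rho_0:=e^{2\pi i\widetilde\theta}\ne1$), the family $\{\mathcal D^\theta\}_{\theta\in I}$ is an analytic, hence continuous, family of closed subspaces of $\mathcal H$; since every $\mathcal D^\theta$ lies in the closed subspace $\mathfrak H$, the orthogonal projection of $\mathcal H$ onto $\mathcal D^\theta$ restricts to the orthogonal projection of $\mathfrak H$ onto $\mathcal D^\theta$, so $\{\mathcal D^\theta\}_{\theta\in I}$ is also a continuous family of closed subspaces of $\mathfrak H$. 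Finally, $\mathcal D^\theta$ has codimension one in $\mathcal H_*^{e^{2\pi i\theta}}(1)$, on which $I_1$ is essentially positive by Proposition~\ref{thm:prop2.10}, so each $B_\theta:=B\vert_{\mathcal D^\theta\times\mathcal D^\theta}$ is essentially positive, as Lemma~\ref{thm:lem4.3} requires.

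The step I expect to require the most care is checking that $\widetilde\theta$ is an \emph{isolated} degeneracy instant of $\theta\mapsto B_\theta$. By definition $\Dim\Ker(B_\theta)=\nu_0(e^{2\pi i\theta},1)$; for $\theta\in I\setminus\{\widetilde\theta\}$ one has $e^{2\pi i\theta}\ne1$, so Corollary~\ref{thm:corstimanu} together with Proposition~\ref{thm:prop2.15}(b) gives $\nu_0(e^{2\pi i\theta},1)=\nu_*(e^{2\pi i\theta},1)=\Dim\Ker\bigl(\mathfrak P-e^{2\pi i\theta}\bigr)$. Since $\mathfrak P$ is an endomorphism of the finite-dimensional space $\C^n\oplus\C^n$ its spectrum is finite, so this dimension vanishes on a punctured neighborhood of $\widetilde\theta$; shrinking $I$, $\widetilde\theta$ is the only degeneracy instant of $B_\theta$ in $I$. (In particular $\Dim\Ker(B_{\widetilde\theta})=\nu_0(\rho_0,1)>0$: if it were zero, Corollary~\ref{thm:contindex} would make $\lambda_0(\cdot,1)$ constant near $\rho_0$, contradicting the hypothesis that $\rho_0$ is a discontinuity point.)

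It then remains to apply Lemma~\ref{thm:lem4.3} at $t_0=\widetilde\theta$, obtaining, for all sufficiently small $\varepsilon>0$,
\[\bigl\vert\,\mathrm n_-(B_{\widetilde\theta+\varepsilon})-\mathrm n_-(B_{\widetilde\theta-\varepsilon})\,\bigr\vert\le\Dim\Ker(B_{\widetilde\theta})=\nu_0\bigl(e^{2\pi i\widetilde\theta},1\bigr).\]
Since $\mathrm n_-(B_{\widetilde\theta\pm\varepsilon})=\lambda_0\bigl(e^{2\pi i(\widetilde\theta\pm\varepsilon)},1\bigr)$, and since by Corollary~\ref{thm:contindex} applied on the two subarcs of $I$ adjacent to $\widetilde\theta$ (on which $\nu_0$ vanishes) these values are independent of $\varepsilon$ for $\varepsilon$ small, letting $\varepsilon\to0^+$ yields the asserted inequality, both for $\widetilde\theta\ne0$ and, in the non-singular case, for $\widetilde\theta=0$. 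The only genuinely technical points are the two transfer remarks above --- that $B$ remains essentially positive on the larger space $\mathcal H_*(1)$ and that continuity of the subspace family survives the restriction of the ambient space --- both of which are routine.
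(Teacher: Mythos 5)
Your proof is correct and follows exactly the route the paper takes: the paper's own proof is the one-line citation of Proposition~\ref{thm:corcriteriosmoothness} (or Corollary~\ref{thm:cor2.9} in the singular case), Proposition~\ref{thm:prop2.10}, and Lemma~\ref{thm:lem4.3}, and you have simply unpacked how those three ingredients combine. Two small remarks: your choice $\mathfrak H=\mathcal H_*(1)$ together with the observation that the computation in Proposition~\ref{thm:prop2.10} does not actually use the boundary condition is a reasonable way to obtain a fixed Fredholm form containing all the $\mathcal D^\theta$, though the paper leaves this ambient-space choice implicit; and your check that $\widetilde\theta$ is an isolated degeneracy instant (via Corollary~\ref{thm:corstimanu}, Proposition~\ref{thm:prop2.15}(b), and the finiteness of $\mathfrak s(\mathfrak P)$) is a genuine hypothesis of Lemma~\ref{thm:lem4.3} that the paper does not spell out, so it was right to verify it.
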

\begin{proof}
It follows immediately from Proposition~\ref{thm:corcriteriosmoothness} (or Corollary~\ref{thm:cor2.9} in the singular case),
Proposition~\ref{thm:prop2.10} and Lemma~\ref{thm:lem4.3}.
\end{proof}

Under a certain nondegeneracy assumption, the jump of the index function at a discontinuity point
can be computed in terms of a finite dimensional reduction (compare with \cite[Theorem~IV, p.\ 180]{Bo4}).
This finite dimensional reduction is rather technical, and we will only sketch its construction.
Given $e^{2\pi i\widetilde\theta}\in\mathfrak s(\mathfrak P_0)\cap\mathds S^1$, let us define a Hermitian form
$B_{\widetilde\theta}$ on the finite dimensional vector space $N_{\widetilde\theta}=\Ker\big(\mathfrak P_0-e^{2\pi i\widetilde\theta}\big)$ as follows.
Identify vectors $v\in N_{\widetilde\theta}$ with functions $V\in\mathcal H_0^{e^{2\pi i\widetilde\theta}}(1)$ in the kernel
of $I_1$ (use Proposition~\ref{thm:kerneleigenspace} and Corollary~\ref{thm:corstimanu}) and, given one such $V$, choose
an arbitrary $C^1$-map $\mathfrak V:\left]-\varepsilon,\varepsilon\right[\to \mathcal H$ with $\mathfrak V(s)\in\mathcal H_0^{e^{2\pi i(\widetilde\theta+s)}}(1)$
for all $s$,
and such that $\mathfrak V(0)=V$. Finally, denote by $P_{\widetilde\theta}:\mathcal H\to \mathcal H_0^{e^{2\pi i\widetilde\theta}}(1)$ the
orthogonal projection, and set:
\[B_{\widetilde\theta}(v,w)=I_1\big(P_{\widetilde\theta}\,\mathfrak V'(0),\mathfrak W(0)\big)+I_1\big(\mathfrak V(0),P_{\widetilde\theta}\,\mathfrak W'(0)\big);\]
it is not hard to show that $B_{\widetilde\theta}$ is well defined, i.e., that the right hand side in the above formula
does not depend on the choice of the $C^1$-maps $\mathfrak V$ and $\mathfrak W$.
\begin{prop}\label{thm:compjumpnondeg}
Let $e^{2\pi i\widetilde\theta}\in\mathds S^1$ be a discontinuity point for $\rho\mapsto\lambda_0(\rho,1)$
(with $e^{2\pi i\widetilde\theta}\ne1$ if $Y$ is a singular solution of \eqref{eq:defMorseSturm}). Then, if $B_{\widetilde\theta}$
is nondegenerate, for $\theta>0$ small enough:
\[\lambda_0\big(e^{2\pi i(\widetilde\theta+\theta)},1\big)-\lambda_0\big(e^{2\pi i(\widetilde\theta-\theta)},1\big)=-\mathrm{signature}(B_{\widetilde\theta}).\]
\end{prop}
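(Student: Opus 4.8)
The plan is to reduce the computation of the index jump to a finite-dimensional problem on the kernel $N_{\widetilde\theta}=\Ker(\mathfrak P_0-e^{2\pi i\widetilde\theta})$, by splitting $\mathcal H$ near $\widetilde\theta$ into a piece where $I_1$ is uniformly positive and a complementary finite-dimensional piece on which the jump is entirely concentrated. Concretely, by Proposition~\ref{thm:prop2.10} the restriction of $I_1$ to $\mathcal H_0^{e^{2\pi i\widetilde\theta}}(1)$ is essentially positive, so it is represented by a self-adjoint Fredholm operator $S_{\widetilde\theta}$; let $P_{\widetilde\theta}$ be the orthogonal projection onto $\mathcal H_0^{e^{2\pi i\widetilde\theta}}(1)$ and decompose that space as $N_{\widetilde\theta}\oplus M_{\widetilde\theta}$, where $M_{\widetilde\theta}$ is the $I_1$-orthogonal complement of the kernel $N_{\widetilde\theta}$ of $S_{\widetilde\theta}$ (using Proposition~\ref{thm:kerneleigenspace} and Corollary~\ref{thm:corstimanu} to identify $N_{\widetilde\theta}$ with the kernel of $I_1$ on $\mathcal H_0^{e^{2\pi i\widetilde\theta}}(1)$). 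Since $\widetilde\theta$ is an isolated discontinuity, $I_1$ is nondegenerate on $M_{\widetilde\theta}$, hence uniformly nondegenerate on a neighborhood; its index there is locally constant and contributes nothing to the jump.

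Next I would transport everything to the fixed model subspace. By Proposition~\ref{thm:corcriteriosmoothness} (or Corollary~\ref{thm:cor2.9}, in the singular case, which is why $e^{2\pi i\widetilde\theta}\ne1$ is excluded there) the family $\{\mathcal H_0^{e^{2\pi i(\widetilde\theta+s)}}(1)\}$ is analytic, so there is an analytic curve $\psi_s$ of isomorphisms of $\mathcal H$ with $\psi_s\big(\mathcal H_0^{e^{2\pi i(\widetilde\theta+s)}}(1)\big)=\mathcal H_0^{e^{2\pi i\widetilde\theta}}(1)$ and $\psi_0=\mathrm{Id}$. Pulling $I_1$ back by $\psi_s^{-1}$ yields an analytic path $s\mapsto B_s$ of essentially positive Hermitian forms on the fixed Hilbert space $\mathcal H_0^{e^{2\pi i\widetilde\theta}}(1)$, with $B_0$ the original form; by construction $\mathrm n_-(B_s)=\lambda_0\big(e^{2\pi i(\widetilde\theta+s)},1\big)$. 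Now apply the standard splitting lemma for analytic paths of essentially positive Fredholm forms (Lemma~\ref{thm:lem4.3} and its proof, via the Riesz projection onto the part of the spectrum near $0$): for $|s|$ small the jump of $\mathrm n_-(B_s)$ across $s=0$ equals the jump of the negative index of the restriction $\widehat B_s$ of $B_s$ to the finite-dimensional eigenspace $N_{\widetilde\theta}$ of $S_{\widetilde\theta}$, because on the complementary spectral subspace $B_s$ stays nondegenerate and keeps constant index. Thus it remains to compute $\mathrm n_-(\widehat B_\theta)-\mathrm n_-(\widehat B_{-\theta})$ for the finite-dimensional symmetric family $\widehat B_s$ on $N_{\widetilde\theta}$ with $\widehat B_0=0$.

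For this last step I would use the first-order Taylor expansion: $\widehat B_s = s\,\widehat B'_0 + O(s^2)$, where $\widehat B'_0$ is the derivative at $s=0$ of the restricted forms, evaluated on $N_{\widetilde\theta}$. A direct differentiation — using that the elements of $N_{\widetilde\theta}$ lie in $\Ker I_1$ so that the zeroth-order term drops out — identifies $\widehat B'_0(v,w)$ precisely with $I_1\big(P_{\widetilde\theta}\mathfrak V'(0),\mathfrak W(0)\big)+I_1\big(\mathfrak V(0),P_{\widetilde\theta}\mathfrak W'(0)\big)=B_{\widetilde\theta}(v,w)$, the Hermitian form defined just before the statement; the projection $P_{\widetilde\theta}$ appears exactly because the $I_1$-pairing of a kernel vector with anything in $M_{\widetilde\theta}$ vanishes, so only the $N_{\widetilde\theta}$-component of $\mathfrak V'(0)$ survives, and the well-definedness (independence of the choice of $C^1$-extensions $\mathfrak V,\mathfrak W$) is the same cancellation. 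Under the hypothesis that $B_{\widetilde\theta}$ is nondegenerate, the quadratic term is negligible: for $s>0$ small, $\mathrm n_-(\widehat B_s)=\mathrm n_-(s B_{\widetilde\theta})=\mathrm n_-(B_{\widetilde\theta})$, and for $s<0$ small, $\mathrm n_-(\widehat B_s)=\mathrm n_-(-s B_{\widetilde\theta})=\mathrm n_-(-B_{\widetilde\theta})=\dim N_{\widetilde\theta}-\mathrm n_-(B_{\widetilde\theta})$. Subtracting gives
\[
\lambda_0\big(e^{2\pi i(\widetilde\theta+\theta)},1\big)-\lambda_0\big(e^{2\pi i(\widetilde\theta-\theta)},1\big)=2\,\mathrm n_-(B_{\widetilde\theta})-\dim N_{\widetilde\theta}=-\mathrm{signature}(B_{\widetilde\theta}),
\]
as claimed. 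The main obstacle I anticipate is the rigorous bookkeeping of the finite-dimensional reduction: showing that the Riesz/spectral projection associated to $S_{\widetilde\theta}$ varies analytically along the pulled-back path, that the reduced forms $\widehat B_s$ faithfully carry the index jump (no "leakage" from the continuous spectrum or from the complementary invariant subspace), and that the derivative of the reduced form genuinely coincides with the intrinsically-defined $B_{\widetilde\theta}$ independently of all the auxiliary choices (the curve $\psi_s$, the extensions $\mathfrak V,\mathfrak W$); these are the technical points that the "we will only sketch its construction" remark is flagging, and each requires a short but careful argument rather than a routine computation.
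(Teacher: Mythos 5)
Your overall plan is correct and matches the method the paper has in mind: analytically trivialize the family $\{\mathcal H_0^{e^{2\pi i(\widetilde\theta+s)}}(1)\}$ by Proposition~\ref{thm:corcriteriosmoothness} (or Corollary~\ref{thm:cor2.9} in the singular case away from $1$), pull back $I_1$ to an analytic path of essentially positive Fredholm Hermitian forms on a fixed Hilbert space, perform the finite-dimensional spectral reduction onto $N_{\widetilde\theta}=\Ker B_0$, and read off the jump from the first-order term. Your concluding arithmetic $\lambda_0(e^{2\pi i(\widetilde\theta+\theta)},1)-\lambda_0(e^{2\pi i(\widetilde\theta-\theta)},1)=\mathrm n_-(B_{\widetilde\theta})-(\dim N_{\widetilde\theta}-\mathrm n_-(B_{\widetilde\theta}))=-\mathrm{signature}(B_{\widetilde\theta})$ is also right, and you correctly flag that the paper itself only gestures at this proof by reference to the spectral flow / partial signature machinery.

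However, the step where you identify the derivative $\widehat B'_0$ of the reduced form with $B_{\widetilde\theta}$ as the paper writes it is where the argument has a real gap, and your stated rationale for the projection $P_{\widetilde\theta}$ is backwards. You claim the projection appears ``because the $I_1$-pairing of a kernel vector with anything in $M_{\widetilde\theta}$ vanishes, so only the $N_{\widetilde\theta}$-component of $\mathfrak V'(0)$ survives.'' But $w\in N_{\widetilde\theta}$ is, by Proposition~\ref{thm:kerneleigenspace} and Corollary~\ref{thm:corstimanu}, an element of the \emph{full} kernel of $I_1\big\vert_{\mathcal H_0^{e^{2\pi i\widetilde\theta}}(1)\times\mathcal H_0^{e^{2\pi i\widetilde\theta}}(1)}$; it therefore $I_1$-pairs to zero not only with $M_{\widetilde\theta}$ but with \emph{every} element of $\mathcal H_0^{e^{2\pi i\widetilde\theta}}(1)$, including the $N_{\widetilde\theta}$-component of $P_{\widetilde\theta}\mathfrak V'(0)$. (Concretely, integrating by parts and using that $w$ solves $w''=Rw$ with $Tw(1)=e^{2\pi i\widetilde\theta}w(0)$, $Tw'(1)=e^{2\pi i\widetilde\theta}w'(0)$ gives $I_1(u,w)=g(u(1),w'(1))-g(u(0),w'(0))$, which vanishes for every $u\in\mathcal H^{e^{2\pi i\widetilde\theta}}(1)$ by $g$-invariance of $T$.) Thus, as written, the expression $I_1(P_{\widetilde\theta}\mathfrak V'(0),\mathfrak W(0))+I_1(\mathfrak V(0),P_{\widetilde\theta}\mathfrak W'(0))$ is \emph{identically zero}, and cannot be the nondegenerate form the proposition assumes. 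What the first-order perturbation argument actually produces is the \emph{projection-free} expression $I_1(\mathfrak V'(0),w)+I_1(v,\mathfrak W'(0))$; this is nonzero in general (a direct computation via the boundary-term formula above gives $2\pi i\big[g(v(0),w'(0))-g(v'(0),w(0))\big]$), it is Hermitian, and it is well defined without any projection, because two admissible curves $\mathfrak V_1,\mathfrak V_2$ with $\mathfrak V_i(0)=v$ have $\mathfrak V_1'(0)-\mathfrak V_2'(0)\in\mathcal H_0^{e^{2\pi i\widetilde\theta}}(1)$ (differentiate the defining constraints), which then pairs to zero with $w$. You should either compute $\widehat B'_0$ directly and record this projection-free form, or explicitly note that the paper's written formula must be read without the projection; simply asserting the identification with the formula as printed, with the incorrect ``only $N_{\widetilde\theta}$ survives'' rationale, does not close the argument.

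\end{document}
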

It is clear from our construction that the value of the jump of $\lambda_0(\rho,1)$   at a discontinuity $\rho_0$
can be computed as an algebraic count of the eigenvalues through zero of the path $\rho\mapsto\mathcal T_\rho$  of self-adjoint Fredholm
operators representing the index form $I_1$ in $\mathcal H_0^\rho(1)$ as $\rho$ runs
in the arc $\mathcal A_{\rho_0}=\{e^{2\pi i\theta}\rho,\ \theta\in[-\varepsilon,\varepsilon]\}$. Technically speaking,
this is the so-called \emph{spectral flow} of the path $\mathcal A_{\rho_0}\ni\rho\mapsto \mathcal T_\rho$, see for
instance \cite{Phillips} for details on the spectral flow. By  Proposition~\ref{thm:corcriteriosmoothness}
(or Proposition~\ref{thm:hoanalytic} in the singular case), the path $\mathcal T_\rho$ is analytic, in which case
higher order methods are available in order to compute the value of the spectral flow at each
degeneracy instant (see \cite{GPP} for details).
The result in Proposition~\ref{thm:compjumpnondeg} is a special case of this method.

\end{section}

\begin{section}{Closed geodesics in stationary Lorentzian manifolds}
\label{sec:closedgeodesics}

\subsection{Closed geodesics}\label{sub:closedgeodesics}
Let us consider a Lorentzian manifold $(M,\mathfrak g)$, with $\mathrm{dim}(M)=n+1$, endowed with a timelike Killing vector field
$\mathcal K$; let $\nabla$ be the Levi--Civita connection of $\mathfrak g$ and $\mathcal R(X,Z)=[\nabla_X,\nabla_Z]-\nabla_{[X,Z]}$
its curvature tensor (see \cite{BEE,HaEl73,One83} for details).
An auxiliary Riemannian metric $\mathfrak g^{\mathrm R}$ on $M$ is obtained by taking $\mathfrak g^{\mathrm R}(v,w)=\mathfrak g(v,w)-
2\mathfrak g(v,\mathcal K)\,\mathfrak g(w,\mathcal K)\mathfrak g(\mathcal K,\mathcal K)^{-1}$.
Let $\gamma:[0,1]\to M$ be a non constant closed geodesic in $(M,\mathfrak g)$ and denote by $\widetilde\gamma:\R\to M$ its
periodic extension to the real line; for $N\ge1$, let $\gamma^N:[0,1]\to M$
denote the $N$-th iterate of $\gamma$, defined by $\gamma^N(s)=\widetilde\gamma(Ns)$, $s\in[0,1]$.
There are two constants associated to $\gamma$: $E_\gamma=\mathfrak g(\dot\gamma,\dot\gamma)$ and $c_\gamma=\mathfrak g(\dot\gamma,\mathcal Y)$;
observe that, by causality, $\gamma$ is spacelike, and thus $E_\gamma>0$. Define a smooth vector field $\mathcal Y$ along
$\gamma$ by setting $\mathcal Y(s)=\mathcal K\big(\gamma(s)\big)-c_\gamma E_\gamma^{-1}\dot\gamma(s)$; this is a periodic timelike
Jacobi field along $\gamma$ which is everywhere orthogonal to $\dot\gamma$.
The index form $\mathcal I_\gamma$ of $\gamma$, which is the second variation of the geodesic action functional
at $\gamma$, is given by:
\[\mathcal I_\gamma(\mathcal V,\mathcal W)=\int_0^1\big[\mathfrak g\big(\Dds \mathcal V,\Dds \mathcal W\big)+\mathfrak g\big(\mathcal R(\dot\gamma,\mathcal V)\,\dot\gamma,\mathcal W)\big]\,\mathrm ds;\]
here $\Dds$ denotes covariant differentiation along $\gamma$.
$\mathcal I_\gamma$ is a bounded symmetric bilinear form defined on the real Hilbert space $\mathcal H^\gamma$ of all periodic vector fields
along $\gamma$ of Sobolev class $H^1$ that are \emph{everywhere orthogonal} to $\dot\gamma$. Consider the following closed
subspaces of $\mathcal H^\gamma$:
\begin{align*}
&\mathcal H^\gamma_*=\big\{\mathcal V\in\mathcal H^\gamma:\mathfrak g\big(\Dds \mathcal V,\mathcal Y\big)-\mathfrak g(\mathcal V,\Dds\mathcal Y)\ \text{is constant on}\ [0,1]\big\},
\\
&\mathcal H^\gamma_0=\big\{\mathcal V\in\mathcal H^\gamma:\mathfrak g\big(\Dds \mathcal V,\mathcal Y\big)-\mathfrak g(\mathcal V,\Dds\mathcal Y)=0\ \text{a.e.\ on}\ [0,1]\big\}.
\end{align*}
Elements in $\mathcal H^\gamma_*$ are variational vector fields along $\gamma$ corresponding to variations of
$\gamma$ by curves $\mu$ for which the quantity $\mathfrak g(\dot\mu,\mathcal Y)$ is constant; similarly, elements
of $\mathcal H^\gamma_0$ correspond to variations of $\gamma$ by curves $\mu$ for which $\mathfrak g(\dot\mu,\mathcal Y)$
is equal to the constant $c_\gamma$.
The restrictions of $I_\gamma$ to $\mathcal H^\gamma_*$ and to $\mathcal H^\gamma_0$ have finite index
(see \cite{BilMerPic} for details); they will be denoted respectively by $\mu(\gamma)$ and $\mu_0(\gamma)$, and called
the \emph{Morse index} and the \emph{restricted Morse index} of $\gamma$. Since $\mathcal H^\gamma_0$ has codimension
$1$ in $\mathcal H^\gamma_*$, then $\mu_0(\gamma)\le\mu(\gamma)\le\mu_0(\gamma)+1$; we will denote by
$\epsilon_\gamma\in\{0,1\}$ the difference $\mu(\gamma)-\mu_0(\gamma)$. The \emph{nullity} of
$\gamma$, $\mathrm n(\gamma)$ is defined as the dimension of the space of periodic Jacobi fields
along $\gamma$ that are everywhere orthogonal to $\dot\gamma$, or, equivalently, as the dimension
of the kernel of  $I_\gamma$ in $\mathcal H^\gamma$. Similarly, we will denote by $\mathrm n_0(\gamma)$ the
\emph{restricted nullity} of $\gamma$, defined as the dimension of the kernel of the restriction of $I_\gamma$ to
$\mathcal H^\gamma_0$.
Let $\Sigma$ be a hypersurface of $M$ through $\gamma(0)$ which is orthogonal
to $\dot\gamma(0)$; denote by $TM_{E_\gamma}\big\vert_\Sigma$ the restriction to $\Sigma$  of the sphere
bundle $\{v\in TM: g(v,v)=E_\gamma\}$. Let $\mathcal P_\Sigma:\mathcal U_\Sigma\to\mathcal U_\Sigma$ denote the Poincar\'e map of $\Sigma$,
defined in a sufficiently small neighborhood $\mathcal U_\Sigma$ of $\dot\gamma(0)$ in $TM_{E_\gamma}\big\vert_\Sigma$.
Recall that $\mathcal P_\Sigma$ preserves the symplectic structure inherited from $TM$
(here one uses the metric $\mathfrak g$ to induce a symplectic form from $TM^*$ to $TM$), and that $\dot\gamma(0)$ is a
fixed point of $\mathcal P_\Sigma$. The \emph{linearized Poincar\'e map of $\gamma$} is the differential
$\mathfrak P_\gamma=\mathrm d\mathcal P_\Sigma\big(\dot\gamma(0)\big):T_{\dot\gamma(0)}\mathcal U_\Sigma\to T_{\dot\gamma(0)}\mathcal U_\Sigma$.
If one uses the horizontal distribution of the connection $\nabla$ to identify $T_{\dot\gamma(0)}(TM)$ with the
direct sum $T_{\gamma(0)}M\oplus T_{\gamma(0)}M$, then $T_{\dot\gamma(0)}\mathcal U_\Sigma$ is identified
with $\dot\gamma(0)^\perp\oplus\dot\gamma(0)^\perp$, and $\mathfrak P_\gamma(v,w)=\big(J(1),\Dds J(1)\big)$, where
$J$ is the unique Jacobi field along $\gamma$ satisfying the initial conditions $J(0)=v$ and $\Dds J(0)=w$.
The closed geodesic $\gamma$ will be called \emph{singular} if the covariant derivative $\Dds\mathcal K$
of the restriction of the Killing field $\mathcal K$ along $\gamma$ is pointwise multiple
of the orthogonal projection of $\mathcal K$ onto $\dot\gamma^\perp$.
This condition is the same as assuming that the covariant derivative $\Dds\mathcal Y$
of the Jacobi field $\mathcal Y$ is pointwise multiple of $\mathcal Y$.
Observe that, by Lemma~\ref{thm:spazicoincsing}, if $\gamma$ is singular then $\epsilon_\gamma=0$.
\smallskip

We will consider the complexification of the Hilbert spaces defined above, as well as the complexification of the linear maps
and the sesquilinear extension of $\mathcal I_\gamma$;
these complexified objects  will be denoted by the same symbols as their real counterparts.
\subsection{Geodesics and Morse--Sturm systems}\label{sub:geomorsesturm}
For $t\in[0,1]$, denote by $\mathbf P_t:T_{\gamma(0)}M\to T_{\gamma(t)}M$ the parallel transport; observe
that $\mathbf P_t$ carries $\dot\gamma(0)^\perp$ isomorphically onto $\dot\gamma(t)^\perp$. We choose
an isomorphism $\phi_0:\R^n\to\dot\gamma(0)^\perp$, and we denote by $\phi_t:\R^n\to\dot\gamma(t)^\perp$ the
isomorphism $\mathbf P_t\circ\phi_0$. Finally, set $T:\R^n\stackrel\cong\longrightarrow\R^n$, $T=\phi_0^{-1}\circ\mathbf P_1\circ\phi_0=\phi_0^{-1}\phi_1$.
Consider the following data to build up a Morse--Sturm system as described in Section~\ref{sec:morsesturm}.
Let $g$ be the nondegenerate symmetric bilinear form on $\R^n$ given by the pull-back $\phi_0^*\mathfrak g$;
since the parallel transport is an isometry, then $g=\phi_t^*\mathfrak g$ for all $t\in[0,1]$, and $T$ is $g$-preserving.\footnote{%
It is interesting to observe here that $T$ belongs to the connected component of the identity of $\mathrm O(\R^n,g)$ precisely
when the geodesic $\gamma$ is orientation preserving, i.e., when the parallel transport $\mathbf P_1$ is
orientation preserving.}
Define $R(t):\R^n\to\R^n$ by $R(t)=\phi_t^{-1}\circ\big[\mathcal R\big(\dot\gamma(t),\cdot\big)\,\gamma(t)\big]\circ\phi_t$;
since $\mathcal R$ is $\mathfrak g$-symmetric, then $R$ is $g$-symmetric. Since $\gamma$ is periodic, $\mathcal R\big(\dot\gamma(0),\cdot\big)\,\dot\gamma(0)=
\mathcal R\big(\dot\gamma(1),\cdot\big)\,\dot\gamma(1)$, and thus $T^{-1}R(0)T=R(1)$.
Again, we will consider complexifications of these objects, that will be denoted by the same symbols as their real counterparts.

Using the isomorphisms $(\phi_s)_{s\in[0,1]}$, from a map $V:[0,1]\to\C^n$ one obtains a vector field $\mathcal V$ along $\gamma$ which is
orthogonal to $\dot\gamma$, defined by $\mathcal V(s)=\phi_s\big(V(s)\big)$; the periodicity
condition $\mathcal V(0)=\mathcal V(1)$ corresponds to the condition $TV(1)=V(0)$. An immediate computation shows that
the map $V\mapsto\mathcal V$, denoted by $\Psi$, carries the space of solutions of the Morse--Sturm system \eqref{eq:defMorseSturm} to the space of Jacobi fields along
$\gamma$ that are everywhere orthogonal to $\dot\gamma$; define $Y$ as $\Psi^{-1}(\mathcal Y)$, where
$\mathcal Y$ is the orthogonal timelike Jacobi field along $\gamma$ defined above, so that $Y$ satisfies (e) in Subsection~\ref{sub:basicsetup}.
It is also immediate to see that $\gamma$ is a singular closed geodesic as defined in Subsection~\ref{sub:closedgeodesics}
exactly when $Y$ is a singular solution of the Morse--Sturm system \eqref{eq:defMorseSturm}.
\begin{example}\label{exa:casostatico}
An important class of examples of singular closed geodesics can be obtained by considering
\emph{static} Lorentzian manifolds $(M,\mathfrak g)$, i.e., Lorentzian manifolds admitting a timelike Killing vector field $\mathcal K$
whose orthogonal distribution $\smash{\mathcal K}^\perp$ is integrable.
Every integral submanifold of $\smash{\mathcal K}^\perp$ is a totally geodesic
submanifold of $M$; every closed geodesic in $M$ which is orthogonal to $\mathcal K$
at some point is contained in one such integral submanifold. Moreover, if $(M,\mathfrak g)$ is globally hyperbolic
or if $M$ is simply connected, then every closed geodesic in $(M,\mathfrak g)$ is orthogonal to $\mathcal K$
and therefore contained  in an integral submanifold $\Sigma$ of $\smash{\mathcal K}^\perp$.
Every such geodesic is singular. Namely, let $\{E_i(t)\}_i$ be a parallel frame of $T\Sigma$ along $\gamma$ relatively
to the Riemannian metric on $\Sigma$ obtained by restriction of $\mathfrak g$.
Since $\Sigma$ is totally geodesic, then $E_i$ is a parallel also in $(M,\mathfrak g)$;
and since $\mathfrak g(\mathcal K,E_i)=0$, by differentiating we obtain
$\mathfrak g\big(\Dds\mathcal K,E_i\big)=0$ for all $i$. This shows that
$ \Dds\mathcal K$ is pointwise multiple of $\mathcal K$, i.e.,
$\gamma$ is singular. It follows in particular that $\epsilon_\gamma=0$ for
all closed geodesic $\gamma$, more generally, the same conclusion holds when $\gamma$ is contained in a totally geodesic hypersurface
of $M$ which is everywhere orthogonal to $\mathcal K$.
Closed geodesics in compact static Lorentzian manifolds are studied in \cite{San06}.
\end{example}

Using the isomorphism $\phi_0\oplus\phi_0$, the restriction of the linearized Poincar\'e map $\mathfrak P_\gamma$ to
$\dot\gamma(0)^\perp\oplus\dot\gamma(0)^\perp$ is identified with the linear Poincar\'e map $\mathfrak P$ of the
Morse--Sturm system defined in Subsection~\ref{sub:linearpoicare}. The restricted Poincar\'e map $\mathfrak P_0$
of the Morse--Sturm system correspond to the restriction of $\mathfrak P_\gamma$ to the invariant
subspace $\mathcal E_0\subset\dot\gamma(0)^\perp\oplus\dot\gamma(0)^\perp$ consisting of pairs $(v,w)$ such that
$g\big(w,\mathcal Y(\gamma(0))\big)-g(v,\Dds\mathcal Y(\gamma(0))\big)=0$.

Recalling the notations in Subsection~\ref{sub:closedsubspaces}, we have
an isomorphism $\Psi:\mathcal H^1(1)\to\mathcal H^\gamma$ that carries the spaces $\mathcal H_*^1(1)$ and $\mathcal H_0^1(1)$ respectively
onto $\mathcal H^\gamma_*$ and $\mathcal H^\gamma_0$. Moreover, the pull-back by $\Psi$ of the index form $I_\gamma$ is
the bilinear form $I_1$ defined in \eqref{eq:defIndexformCn}.
In total analogy, the index form $I_{\gamma^N}$ of the $N$-th iterate $\gamma^N$ of $\gamma$ corresponds
to the index form $I_N$ in \eqref{eq:defIndexformCn}.
The indexes and the nullities of the geodesic $\gamma$ are therefore related to the indexes and nullities of the
Morse--Sturm system (Subsection~\ref{sub:nulindseq}) by:
\[\mu(\gamma^N)=\lambda_*(1,N),\quad\mu_0(\gamma^N)=\lambda_0(1,N),\quad \mathrm n(\gamma^N)=\nu_*(1,N),\quad\!\!\mathrm n_0(\gamma^N)=\nu_0(1,N).\]
If we define $\Lambda_\gamma, N_\gamma:\mathds S^1\to\N$ by setting:
\[\Lambda_\gamma(\rho)=\lambda_0(\rho,1),\quad N_\gamma(\rho)=\nu_0(\rho,1),\qquad\rho\in\mathds S^1,\]
we can state the central result of the paper:
\begin{teo}\label{thm:iterazioneindice} For all $N\ge1$, the following statements hold:
\begin{enumerate}
\item\label{itm:iterazioneindice1}
$\mu(\gamma^N)=\mu(\gamma)+\sum\limits_{k=1}^{N-1}\Lambda_\gamma\big(e^{2\pi ik/N}\big)=\epsilon_\gamma+\sum\limits_{k=1}^N\Lambda_\gamma\big(e^{2\pi ik/N}\big)$.
\item\label{itm:iterazioneindice1bis} $\mu_0(\gamma^N)=\sum\limits_{k=1}^N\Lambda_\gamma\big(e^{2\pi ik/N}\big)$.
\item\label{itm:iterazioneindice2} If $\gamma$ is non singular, the jumps of $\Lambda_\gamma$ can only occur at points of the spectrum of
$\mathfrak P_\gamma$ that lie on the unit circle; if $\gamma$ is singular a jump of $\Lambda_\gamma$ can occur also at $1$.
\item\label{itm:iterazioneindice3} If $\gamma$ is non singular and $e^{2\pi i\widetilde\theta}$ is a discontinuity point of $\Lambda_\gamma$, then
$\Lambda_\gamma\big(e^{2\pi i\widetilde\theta}\big)\le\lim\limits_{\theta\to0^\pm}\Lambda_\gamma\big(e^{2\pi i\widetilde\theta+\theta}\big)$.
Moreover, the following estimate on the jump of $\Lambda_\gamma$ holds:
\[\left\vert\lim_{\theta\to0^+}\Lambda_\gamma\big(e^{2\pi i\widetilde\theta+\theta}\big)-\lim_{\theta\to0^-}\Lambda_\gamma\big(e^{2\pi i\widetilde\theta+\theta}\big)
\right\vert\le N_\gamma\big(e^{2\pi i\widetilde\theta}\big).\]
The same conclusion holds when $\gamma$ is singular, under the additional assumption that $e^{2\pi i\widetilde\theta}\ne1$.
\item\label{itm:correctiontermiterated} For all $N\ge1$, $\epsilon_\gamma=\epsilon_{\gamma^N}$.
\end{enumerate}
\end{teo}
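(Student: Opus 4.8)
The plan is to deduce Theorem~\ref{thm:iterazioneindice} from the abstract results of Sections~\ref{sec:morsesturm}--\ref{sec:sequence} through the translation recorded in Subsection~\ref{sub:geomorsesturm}. First I would fix the dictionary: via the isomorphism $\Psi$ one has $\mu(\gamma^N)=\lambda_*(1,N)$, $\mu_0(\gamma^N)=\lambda_0(1,N)$, $\mathrm n(\gamma^N)=\nu_*(1,N)$ and $\mathrm n_0(\gamma^N)=\nu_0(1,N)$; by definition $\Lambda_\gamma(\rho)=\lambda_0(\rho,1)$ and $N_\gamma(\rho)=\nu_0(\rho,1)$; the geodesic $\gamma$ is singular precisely when $Y$ is a singular solution of \eqref{eq:defMorseSturm}; and the linearized Poincar\'e map $\mathfrak P_\gamma$ is conjugate, via $\phi_0\oplus\phi_0$, to the linear Poincar\'e map $\mathfrak P$ of the associated Morse--Sturm system, so that $\mathfrak s(\mathfrak P_\gamma)=\mathfrak s(\mathfrak P)$.

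With the dictionary in hand, items~\ref{itm:iterazioneindice1} and~\ref{itm:iterazioneindice1bis} come out as a direct transcription of the Fourier theorem (Proposition~\ref{thm:prop4.2}) with $\omega=e^{2\pi i/N}$: its second identity $\lambda_0(1,N)=\sum_{k=1}^N\lambda_0(\omega^k,1)$ is exactly item~\ref{itm:iterazioneindice1bis}, and its first identity gives $\mu(\gamma^N)=\lambda_*(1,1)+\sum_{k=1}^{N-1}\lambda_0(\omega^k,1)=\mu(\gamma)+\sum_{k=1}^{N-1}\Lambda_\gamma\big(e^{2\pi ik/N}\big)$. To pass to the second expression in item~\ref{itm:iterazioneindice1} one observes that the missing $k=N$ summand is $\Lambda_\gamma(1)=\lambda_0(1,1)=\mu_0(\gamma)$ and that $\mu(\gamma)=\mu_0(\gamma)+\epsilon_\gamma$ by the very definition of $\epsilon_\gamma$. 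Item~\ref{itm:correctiontermiterated} then falls out of items~\ref{itm:iterazioneindice1} and~\ref{itm:iterazioneindice1bis} by subtraction: the two telescoping sums over the $N$-th roots of unity cancel, leaving $\epsilon_{\gamma^N}=\mu(\gamma^N)-\mu_0(\gamma^N)=\lambda_*(1,1)-\lambda_0(1,1)=\mu(\gamma)-\mu_0(\gamma)=\epsilon_\gamma$.

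For item~\ref{itm:iterazioneindice2}, I would appeal to Corollary~\ref{thm:contindex}, which shows that $\rho\mapsto\Lambda_\gamma(\rho)=\lambda_0(\rho,1)$ can only jump at points $\rho$ where $I_1$ is degenerate on $\mathcal H_0^\rho(1)$, that is, where $N_\gamma(\rho)=\nu_0(\rho,1)\neq0$ --- with the proviso that in the singular case Corollary~\ref{thm:contindex} grants constancy only away from $\rho=1$, which is the source of the possible extra jump at $1$. For $\rho\neq1$, Corollary~\ref{thm:corstimanu} gives $\nu_0(\rho,1)=\nu_*(\rho,1)$, and Proposition~\ref{thm:prop2.15}(b) identifies this with $\Dim\big(\Ker(\mathfrak P-\rho)\big)$, so every such $\rho$ lies in $\mathfrak s(\mathfrak P)=\mathfrak s(\mathfrak P_\gamma)$; when $\gamma$ is non-singular, a jump at $\rho=1$ forces $\nu_0(1,1)\neq0$, hence $\nu_*(1,1)\neq0$ since $\nu_0(1,1)\le\nu_*(1,1)$, and again $1\in\mathfrak s(\mathfrak P_\gamma)$. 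Finally, item~\ref{itm:iterazioneindice3} is the union of two facts already in place: the lower semicontinuity of $\rho\mapsto\lambda_0(\rho,1)$ from Proposition~\ref{thm:prop2.15}(a), valid at $1$ exactly when $\gamma$ is non-singular, and the jump estimate of Corollary~\ref{thm:cor4.4}, which bounds the absolute value of the jump of $\lambda_0(\cdot,1)$ at $e^{2\pi i\widetilde\theta}$ by $\nu_0\big(e^{2\pi i\widetilde\theta},1\big)=N_\gamma\big(e^{2\pi i\widetilde\theta}\big)$, again with the exception of $\widetilde\theta=0$ in the singular case.

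Within the scope of this theorem there is no genuinely new estimate to establish: every item is a corollary of the machinery of Sections~\ref{sec:morsesturm}--\ref{sec:sequence}, and above all of the Fourier theorem. Accordingly, the point I would treat most carefully is the bookkeeping in the singular case at $\rho=1$: there the family $\{\mathcal H_0^\rho(1)\}$ is only known to be continuous on $\mathds S^1\setminus\{1\}$ (Corollary~\ref{thm:cor2.9}), lower semicontinuity of $\Lambda_\gamma$ may genuinely fail at $1$, and one must claim only what Corollaries~\ref{thm:contindex}, \ref{thm:corstimanu} and~\ref{thm:cor4.4} actually deliver there. A secondary, purely clerical, subtlety is that the $N$-th root of unity with index $k=N$ is $1$ itself, so the restricted value $\Lambda_\gamma(1)=\mu_0(\gamma)$ must be matched up correctly when moving between the two displayed forms of item~\ref{itm:iterazioneindice1}.
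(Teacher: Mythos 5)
Your proposal is correct and follows exactly the same route as the paper: items (1) and (2) are transcriptions of the Fourier theorem (Proposition~\ref{thm:prop4.2}), item (5) is obtained by subtracting them, item (3) reduces to Corollary~\ref{thm:contindex} together with the identification of the kernel with eigenspaces of $\mathfrak P$ (Proposition~\ref{thm:kerneleigenspace}/Proposition~\ref{thm:prop2.15}(b) via Corollary~\ref{thm:corstimanu}), and item (4) combines Proposition~\ref{thm:prop2.15}(a) with Corollary~\ref{thm:cor4.4}. You have simply spelled out the bookkeeping, including the treatment of the singular case at $\rho=1$, more explicitly than the paper's very terse proof.
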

\begin{proof}
Parts \eqref{itm:iterazioneindice1} and \eqref{itm:iterazioneindice1bis} follow from Proposition~\ref{thm:prop4.2}.
Part~\eqref{itm:iterazioneindice2} follows from Corollary~\ref{thm:contindex} and Proposition~\ref{thm:kerneleigenspace}.
The first statement in \eqref{itm:iterazioneindice3} is the lower semi-continuity property of the function
$\rho\mapsto\lambda_0(\rho,1)$ proved in (a) of Proposition~\ref{thm:prop2.15} and the second one  follows from Corollary~\ref{thm:cor4.4};
part~\eqref{itm:correctiontermiterated} follows from \eqref{itm:iterazioneindice1} and \eqref{itm:iterazioneindice1bis}.
\end{proof}

\subsection{Iteration formulas for the Morse index}
Let us show that the sequence $\mu(\gamma^N)$ has linear growth in $N$:
\begin{prop}\label{thm:stimacrescitalineare}
Either $\mu(\gamma^N)$ is a constant sequence (equal to $\epsilon_\gamma$), or the limit:
\[\lim_{N\to\infty}\tfrac1N\,\mu\big(\gamma^N\big)=\lim_{N\to\infty}\tfrac1N\,\mu_0\big(\gamma^N\big)\]
exists, it is finite and positive. In this case, its value is given by a sum of the type:
\begin{equation}\label{eq:formulasomma} a_0+\sum_{j=1}^Ka_j\theta_j,\end{equation}
where $a_i$ are integers, $a_0>0$, and $0\le\theta_1<\theta_2<\ldots<\theta_K<1$ are real numbers such that
$e^{2\pi i\theta_j}$ belong to the spectrum of $\mathfrak P_\gamma$.
\end{prop}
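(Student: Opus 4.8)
Following the dictionary of Subsection~\ref{sub:geomorsesturm}, the plan is to read off from the Fourier theorem (Proposition~\ref{thm:prop4.2}) that $\tfrac1N\mu(\gamma^N)$ is asymptotically a Riemann sum of the step function $\theta\mapsto\Lambda_\gamma(e^{2\pi i\theta})$, and then to evaluate the resulting integral and extract its algebraic form. First I would normalize the problem: by Theorem~\ref{thm:iterazioneindice}, parts \eqref{itm:iterazioneindice1bis} and \eqref{itm:correctiontermiterated}, one has $\mu(\gamma^N)=\epsilon_\gamma+\mu_0(\gamma^N)$ with $\epsilon_\gamma\in\{0,1\}$ independent of $N$, and $\mu_0(\gamma^N)=\sum_{k=1}^N\Lambda_\gamma\big(e^{2\pi ik/N}\big)$; hence $\tfrac1N\mu(\gamma^N)$ and $\tfrac1N\mu_0(\gamma^N)$ differ by $\epsilon_\gamma/N\to0$ and share the same limit whenever either exists.

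The key structural input is that $\Lambda_\gamma$ is a nonnegative, integer-valued, bounded step function on $\mathds S^1$: by Theorem~\ref{thm:iterazioneindice}\eqref{itm:iterazioneindice2} its discontinuities lie in the finite set $\mathfrak s(\mathfrak P_\gamma)\cap\mathds S^1$, and each value $\Lambda_\gamma(\rho)=\lambda_0(\rho,1)$ is a finite integer by Proposition~\ref{thm:prop2.10}. Thus $\Lambda_\gamma$ is constant on each of the finitely many open arcs cut out by points $e^{2\pi i\theta_1},\dots,e^{2\pi i\theta_K}$ with $0\le\theta_1<\dots<\theta_K<1$, whose arguments may be taken among those of $\mathfrak s(\mathfrak P_\gamma)\cap\mathds S^1$. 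Consequently $\tfrac1N\sum_{k=1}^N\Lambda_\gamma(e^{2\pi ik/N})$ is a Riemann sum of $\theta\mapsto\Lambda_\gamma(e^{2\pi i\theta})$ for the uniform partition of $[0,1]$, and since this integrand is bounded and piecewise constant,
\[\lim_{N\to\infty}\tfrac1N\,\mu(\gamma^N)=\lim_{N\to\infty}\tfrac1N\,\mu_0(\gamma^N)=\int_0^1\Lambda_\gamma\big(e^{2\pi i\theta}\big)\,\mathrm d\theta,\]
the limit existing automatically and being finite; the values of $\Lambda_\gamma$ at the $e^{2\pi i\theta_j}$, and its possibly anomalous value at $\rho=1$ in the singular case, are sampled with weight $1/N$ and contribute nothing in the limit.

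It then remains to analyze this integral and to separate the two alternatives. Writing $v_j$ for the nonnegative integer value of $\Lambda_\gamma$ on the arc between $e^{2\pi i\theta_j}$ and $e^{2\pi i\theta_{j+1}}$ (cyclic indices, $\theta_{K+1}=\theta_1+1$), one has $\int_0^1\Lambda_\gamma(e^{2\pi i\theta})\,\mathrm d\theta=\sum_j v_j(\theta_{j+1}-\theta_j)$; expanding and collecting the coefficient of each $\theta_j$ puts this in the form \eqref{eq:formulasomma}, with $a_j=v_{j-1}-v_j\in\Z$ and $a_0$ equal to the value of $\Lambda_\gamma$ on the arc through $\rho=1$. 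For the dichotomy I would argue that $\Lambda_\gamma\equiv0$ on $\mathds S^1$ if and only if $\mu_0(\gamma^N)=0$ for every $N$: one implication is immediate, and conversely, if all $\mu_0(\gamma^N)$ vanish then $\Lambda_\gamma$ vanishes at every root of unity, hence — being locally constant off a finite set — off that finite set, and lower semicontinuity away from $1$ (Theorem~\ref{thm:iterazioneindice}\eqref{itm:iterazioneindice3}) together with $\Lambda_\gamma(1)=\mu_0(\gamma)=0$ forces $\Lambda_\gamma\equiv0$, so $\mu(\gamma^N)\equiv\epsilon_\gamma$. Otherwise $\Lambda_\gamma\not\equiv0$, and applying lower semicontinuity at a point where $\Lambda_\gamma>0$ shows $\Lambda_\gamma\ge1$ on a nonempty arc, whence the integral above is finite and strictly positive.

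The computations in the last paragraph are essentially routine once the Fourier theorem is available. The one genuinely delicate point — and the part I expect to require care — is the behaviour of $\Lambda_\gamma$ at $\rho=1$ in the singular case, where lower semicontinuity of $\lambda_0(\cdot,1)$ may fail: one must verify, using the unconditional lower semicontinuity of $\lambda_*(\cdot,1)$ from Proposition~\ref{thm:prop2.15} together with the identification $\mathcal H_0^1(1)=\mathcal H_*^1(1)$ of Lemma~\ref{thm:spazicoincsing}, that this anomaly affects neither the value of the limit (a single sampled point, weight $1/N$) nor the equivalence ``$\mu(\gamma^N)$ constant $\Longleftrightarrow\Lambda_\gamma\equiv0$'' used to separate the two cases. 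Everything else is the bookkeeping already indicated.
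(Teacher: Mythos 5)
Your proof takes essentially the same route as the paper's: invoke the Fourier theorem to write $\mu_0(\gamma^N)$ as a sum of $\Lambda_\gamma$ over $N$-th roots of unity, recognize $\tfrac1N\mu_0(\gamma^N)$ as a Riemann sum of the piecewise-constant step function $\Lambda_\gamma$ (Riemann integrable because its discontinuities lie in the finite set $\mathfrak s(\mathfrak P_\gamma)\cap\mathds S^1$), pass to the integral $\int_{\mathds S^1}\Lambda_\gamma$, and then read off the algebraic form \eqref{eq:formulasomma} by expanding $\sum_j v_j(\theta_{j+1}-\theta_j)$ and collecting coefficients of the $\theta_j$. Your constants $a_j=v_{j-1}-v_j$, $a_0=$ value of $\Lambda_\gamma$ on the arc containing $\rho=1$ match the paper's $a_j=\beta_{j-1}-\beta_j$, $a_0=\beta_K$.

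The one place where you go slightly beyond the paper is in your closing paragraph: you correctly flag the singular case at $\rho=1$ as the weak link of the argument. The paper's proof simply asserts ``$\mu(\gamma^N)$ is constant if and only if $\Lambda_\gamma$ vanishes identically'' and moves on. But exactly as you anticipate, the direction you would need to separate the two alternatives — that $\Lambda_\gamma\not\equiv0$ forces $\int\Lambda_\gamma>0$ — relies on finding $\rho_0$ with $\Lambda_\gamma(\rho_0)>0$ at which lower semicontinuity holds, so that $\Lambda_\gamma\ge1$ on a full arc. If $\gamma$ is singular and $\Lambda_\gamma$ were positive only at the isolated point $\rho=1$ (where by Theorem~\ref{thm:iterazioneindice}\eqref{itm:iterazioneindice2} a jump can occur even off $\mathfrak s(\mathfrak P_\gamma)$, and where lower semicontinuity of $\lambda_0(\cdot,1)$ is not asserted), the sequence $\mu(\gamma^N)$ would be constant while the integral would vanish, and neither alternative in the statement would apply as worded. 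Your proposed remedy — exploit the unconditional lower semicontinuity of $\lambda_*(\cdot,1)$ together with $\mathcal H_0^1(1)=\mathcal H_*^1(1)$ — only yields $\Lambda_\gamma(1)\le 1+\liminf_{\rho\to1}\Lambda_\gamma(\rho)$, which bounds but does not rule out such a jump; so the verification you defer is genuinely open and is not supplied by the paper either. Aside from this shared subtlety, which you are more candid about than the paper is, your argument is a faithful reproduction of the paper's proof.
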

\begin{proof}
By part~\eqref{itm:iterazioneindice2} of Theorem~\ref{thm:iterazioneindice}, $\Lambda_\gamma$ has a finite number of discontinuities,
thus it is Riemann integrable  and  the limit:
\[\lim_{N\to\infty}\tfrac1N\,\mu\big(\gamma^N\big)=\lim_{N\to\infty}\frac1N\,\sum\limits_{k=1}^N\Lambda_\gamma\big(e^{2\pi ik/N}\big)\]
equals the integral $\displaystyle\int_{\mathds S^1}\Lambda_\gamma\,\mathrm d\rho\ge0$. Using \eqref{itm:iterazioneindice1}
in Theorem~\ref{thm:iterazioneindice}, $\mu(\gamma^N)$ is constant if and only if
$\Lambda_\gamma$ vanishes identically (recall that $\Lambda_\gamma$ is piecewise constant), hence if $\mu(\gamma^N)$ is not constant,
$\int_{\mathds S^1}\Lambda_\gamma\,\mathrm d\rho>0$. If $e^{2\pi i\theta_j}$ are the points of discontinuity of $\Lambda_\gamma$ (which are necessarily
points in the spectrum of $\mathfrak P_\gamma$ by part~\eqref{itm:iterazioneindice2} of Theorem~\ref{thm:iterazioneindice}), $j=1,\ldots,K$,
setting:
\begin{equation}\label{eq:defbetaj}
\beta_j=\lim_{\theta\to0^+}\Lambda_\gamma\big(e^{2\pi i(\theta_j+\theta)}\big)\in\N\setminus\{0\},
\end{equation}
then the integral $\int_{\mathds S^1}\Lambda_\gamma\,\mathrm d\rho$ is given by the sum \eqref{eq:formulasomma}, where:
\[a_0=\beta_K,\quad a_1=\beta_K-\beta_1,\quad a_j=\beta_{j-1}-\beta_j,\ j=2,\ldots,K-1.\qedhere\]
\end{proof}
In order to apply equivariant Morse theory to the closed geodesic variational problem,
one needs a somewhat finer estimate on the growth of the index by iteration. More precisely,
it is needed a sort of \emph{uniform superlinear growth} for the sequence $\mu(\gamma^N)$ (see \cite[\S~1]{GroMey2}).
The result of Proposition~\ref{thm:stimacrescitalineare} can be improved as follows.
\begin{prop}\label{eq:stimamigliore}
Either $\mu(\gamma^N)$ is a constant sequence, or there exist constants $\alpha>0$ and $\beta\in\R$ such that:
\[\mu(\gamma^{N+s})-\mu(\gamma^N)\ge\alpha\cdot s+\beta\]
for all $N,s\in\N$.
\end{prop}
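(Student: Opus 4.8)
The plan is to read off the conclusion directly from the Fourier theorem. By part~\eqref{itm:iterazioneindice1} of Theorem~\ref{thm:iterazioneindice} one has $\mu(\gamma^N)=\epsilon_\gamma+\sum_{k=1}^N\Lambda_\gamma(e^{2\pi ik/N})$, and by part~\eqref{itm:correctiontermiterated} the correction term $\epsilon_\gamma$ does not depend on $N$; hence $\mu(\gamma^{N+s})-\mu(\gamma^N)=\mu_0(\gamma^{N+s})-\mu_0(\gamma^N)$, and the problem reduces to estimating the partial sums
\[
S(N):=\mu_0(\gamma^N)=\sum_{k=1}^N\Lambda_\gamma\big(e^{2\pi ik/N}\big).
\]
If $\Lambda_\gamma\equiv0$ then $\mu(\gamma^N)\equiv\epsilon_\gamma$, which is the constant case; so I would assume $\Lambda_\gamma\not\equiv0$ and set $\bar\Lambda:=\int_{\mathds S^1}\Lambda_\gamma\,\mathrm d\rho$, which is strictly positive (as noted in the proof of Proposition~\ref{thm:stimacrescitalineare}).

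The main step is to show that $S(N)$ agrees with the linear term $N\bar\Lambda$ up to an error bounded uniformly in $N$. For this I would regard $f(\theta)=\Lambda_\gamma(e^{2\pi i\theta})$ as a $1$-periodic step function which, by part~\eqref{itm:iterazioneindice2} of Theorem~\ref{thm:iterazioneindice}, has only finitely many jumps — say $K$ of them — and which is bounded, $0\le f\le M$ with $M:=\max_\rho\Lambda_\gamma(\rho)$. Writing $N\bar\Lambda=N\sum_{k=1}^N\int_{(k-1)/N}^{k/N}f(\theta)\,\mathrm d\theta$ and comparing termwise with $S(N)=\sum_{k=1}^N f(k/N)$, the $k$-th terms coincide whenever $f$ is constant on the interval $\big((k-1)/N,k/N\big]$; since each jump of $f$ lies in exactly one such interval (using $1$-periodicity to treat the index $k=N$), at most $K$ indices can fail this, and for each of them the discrepancy has modulus at most $2M$. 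This yields $\big|S(N)-N\bar\Lambda\big|\le 2MK=:C$ for every $N\ge1$.

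Granting this uniform bound, the proof concludes at once: for all $N$ and $s$,
\[
\mu(\gamma^{N+s})-\mu(\gamma^N)=S(N+s)-S(N)\ge\big((N+s)\bar\Lambda-C\big)-\big(N\bar\Lambda+C\big)=\bar\Lambda\,s-2C,
\]
so the assertion holds with $\alpha=\bar\Lambda>0$ and $\beta=-2C$. The only point that requires real care — and which I would regard as the main (if mild) obstacle — is the uniform boundedness of the error $S(N)-N\bar\Lambda$: one must check that a term in the termwise comparison can be non-zero only when the corresponding arc between consecutive $N$-th roots of unity actually contains a discontinuity of $\Lambda_\gamma$, so that the cumulative error stays bounded by $2MK$ rather than growing with $N$. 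I would also observe that no lower semicontinuity of $\Lambda_\gamma$ is used in this argument, so it is insensitive to whether $\gamma$ is singular and to the possible discontinuity of $\Lambda_\gamma$ at the point $1$.
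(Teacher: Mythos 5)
Your proof is correct, and it takes a genuinely different route from the paper. The paper partitions the circle into the arcs cut out by the discontinuity points $\theta_1<\cdots<\theta_K$ of $\Lambda_\gamma$, counts separately how many $(N+s)$-th and $N$-th roots of unity can fall in each arc via floor-function estimates, subtracts, and then discards all but one favourable term; it also invokes the lower semi-continuity of $\Lambda_\gamma$ to control the contribution at the jump points themselves. You instead compare the Birkhoff sum $S(N)=\sum_{k=1}^N\Lambda_\gamma(e^{2\pi ik/N})$ directly with the Riemann integral $N\bar\Lambda$, observing that the $k$-th summand can differ from $N\int_{(k-1)/N}^{k/N}\Lambda_\gamma$ only when one of the finitely many jump points lies in $((k-1)/N,k/N]$, which pins the total error down to $2MK$ uniformly in $N$. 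Two things this buys: (i) the slope constant you obtain is $\alpha=\bar\Lambda=\int_{\mathds S^1}\Lambda_\gamma\,\mathrm d\rho$, which dominates the paper's $\alpha=(\theta_{j_0+1}-\theta_{j_0})\beta_{j_0}$ (a single summand of that integral); and (ii) as you correctly point out, the argument does not use lower semi-continuity of $\Lambda_\gamma$ at all, only boundedness and finiteness of the discontinuity set, so it covers the singular case at $\rho=1$ without any separate discussion. In fact your uniform bound $|S(N)-N\bar\Lambda|\le 2MK$ also gives a one-line replacement for Proposition~\ref{thm:stimacrescitalineare}. The one step worth flagging as needing care — which you do address — is the half-open convention $((k-1)/N,k/N]$ together with $1$-periodicity, which guarantees each discontinuity of $\Lambda_\gamma$ is assigned to exactly one subinterval so the error count does not double; this is the point that keeps the bound independent of $N$.
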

\begin{proof}
As above, let $e^{2\pi i\theta_j}$, be the discontinuity points of the map $\Lambda_\gamma$, where
$0\le\theta_1<\theta_2<\ldots<\theta_K<1$, set $\theta_{K+1}=\theta_1+1$, and define $\beta_j$ as in
\eqref{eq:defbetaj}. Note that, by \eqref{itm:iterazioneindice3} in Theorem~\ref{thm:iterazioneindice},
$\Lambda_\gamma\big(e^{2\pi i\theta_j}\big)\le\beta_j$ for all $j$.
If we denote by $\lfloor\cdot\rfloor$ the integer part function, the number of
$(N+s)$-th roots of unity that lie in the open arc $\{e^{2\pi i\theta}:\theta_j<\theta<\theta_{j+1}\}$
is at least $\lfloor (N+s)(\theta_{j+1}-\theta_j)\rfloor-1$. Similarly, the number of $N$-th roots of unity
that lie in the arc
$\{e^{2\pi i\theta}:\theta_j\le\theta<\theta_{j+1}\}$ is at most $\lfloor N(\theta_{j+1}-\theta_j)\rfloor+1$.
Thus, the following inequality holds:
\begin{multline}\label{eq:deslonga}
\mu(\gamma^{N+s})-\mu(\gamma^N)=\sum_{k=1}^{N+s}\Lambda_\gamma\big(e^{2\pi ik/(N+s)}\big)-\sum_{k=1}^{N}\Lambda_\gamma\big(e^{2\pi ik/N}\big)\\\ge
\sum_{j=1}^{K+1}\big(\lfloor (N+s)(\theta_{j+1}-\theta_j)\rfloor-1\big)\beta_j-\sum_{j=1}^{K+1}\big(\lfloor N(\theta_{j+1}-\theta_j)\rfloor+1\big)\beta_j\\
\ge\sum_{j=1}^{K+1}\big(\lfloor (N+s)(\theta_{j+1}-\theta_j)\rfloor-\lfloor N(\theta_{j+1}-\theta_j)\rfloor \big)\beta_j-2(K+1)\max\Lambda_\gamma\\
\ge\sum_{j=1}^{K+1}\big(\lfloor s(\theta_{j+1}-\theta_j)\rfloor-1\big)\beta_j-2(K+1)\max\Lambda_\gamma\\\ge \sum_{j=1}^{K+1}\big(\lfloor s(\theta_{j+1}-\theta_j)\rfloor\big)\beta_j
-3(K+1)\max\Lambda_\gamma.
\end{multline}
The assumption that $\mu(\gamma^N)$ is not a constant sequence implies the existence of at least one $\theta_0\in\left[0,1\right[$
such that $\Lambda_\gamma\big(e^{2\pi i\theta_0}\big)>0$; then $\theta_0\in\left[\theta_{j_0},\theta_{j_0+1}\right[$ for
some $j_0$, which implies $\beta_{j_0+1}-\beta_{j_0}>0$ and $\beta_{j_0}>0$. From \eqref{eq:deslonga} we therefore obtain:
\begin{multline*}
\mu(\gamma^{N+s})-\mu(\gamma^N)\ge \lfloor s(\theta_{j_0+1}-\theta_{j_0})\rfloor\cdot\beta_{j_0}
-3(K+1)\max\Lambda_\gamma\\\ge s(\theta_{j_0+1}-\theta_{j_0})\beta_{j_0}-\beta_{j_0}-3(K+1)\max\Lambda_\gamma.
\end{multline*}
This concludes the proof.
\end{proof}
\subsection{Hyperbolic geodesics}
A closed geodesic $\gamma:[0,1]\to M$ is said to be \emph{hyperbolic} if the linearized Poincar\'e map $\mathfrak P_\gamma$
has no eigenvalues on the unit circle. We say that a closed geodesic $\gamma$ is \emph{strongly hyperbolic} if, in addition,
$\epsilon_\gamma=0$. Observe that if $\gamma$ is (strongly) hyperbolic, then $\gamma^N$ is also (strongly) hyperbolic
for all $N\ge1$.
\begin{lem}
If $\gamma$ is  hyperbolic, then  $\mu(\gamma^N)=\epsilon_\gamma+N\cdot\mu_0(\gamma)$
for all $N\ge1$. If $\gamma$ is strongly hyperbolic, then $\mu(\gamma^N)=\mu_0(\gamma^N)=N\cdot\mu_0(\gamma)$.
\end{lem}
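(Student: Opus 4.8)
\textit{Plan.} The idea is to reduce the whole statement to a single assertion on the index function: for a hyperbolic $\gamma$, the map $\Lambda_\gamma$ is constant on $\mathds S^1$, with value $\mu_0(\gamma)$. Granting this, part~\eqref{itm:iterazioneindice1bis} of Theorem~\ref{thm:iterazioneindice} gives $\mu_0(\gamma^N)=\sum_{k=1}^N\Lambda_\gamma\big(e^{2\pi ik/N}\big)=N\,\mu_0(\gamma)$, and then part~\eqref{itm:correctiontermiterated} yields $\mu(\gamma^N)=\epsilon_{\gamma^N}+\mu_0(\gamma^N)=\epsilon_\gamma+N\,\mu_0(\gamma)$. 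For a strongly hyperbolic $\gamma$ one has in addition $\epsilon_\gamma=0$, so both chains collapse to $\mu(\gamma^N)=\mu_0(\gamma^N)=N\,\mu_0(\gamma)$.

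To prove the assertion, I would first use hyperbolicity to note that $\mathfrak s(\mathfrak P_\gamma)$ contains no point of $\mathds S^1\setminus\{1\}$; hence, by part~\eqref{itm:iterazioneindice2} of Theorem~\ref{thm:iterazioneindice}, the only point of $\mathds S^1$ at which $\Lambda_\gamma$ may be discontinuous is $\rho=1$, and since $\mathds S^1\setminus\{1\}$ is connected, $\Lambda_\gamma$ takes a constant value $c$ there. It then remains to identify $c$ with $\Lambda_\gamma(1)=\mu_0(\gamma)$. One inequality, $\mu_0(\gamma)\le c$, comes for free from the lower semicontinuity of $\Lambda_\gamma$ at $1$: this holds in the non-singular case by part~\eqref{itm:iterazioneindice3} of Theorem~\ref{thm:iterazioneindice}, and it holds in the singular case as well, because a singular $\gamma$ has $\epsilon_\gamma=0$ (Lemma~\ref{thm:spazicoincsing}) and semicontinuity at $1$ is only lost when $\epsilon_\gamma=1$.

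The reverse inequality $c\le\mu_0(\gamma)$ is the substantive point: it says that the index of $I_1$ on $\mathcal H_0^\rho(1)$ does not jump up as $\rho$ leaves $1$, and it must be obtained by a local analysis at $\rho=1$. By Proposition~\ref{thm:prop2.15}(b) and Corollary~\ref{thm:corstimanu} one has $\nu_0(\rho,1)=\nu_*(\rho,1)=\Dim\Ker(\mathfrak P_\gamma-\rho)=0$ for $\rho\ne1$, while at $\rho=1$, again by Corollary~\ref{thm:corstimanu}, the kernel of $I_1$ on $\mathcal H_0^1(1)$ equals $\Ker\!\big(I_1|_{\mathcal H_*^1(1)}\big)\cap\mathcal H_0^1(1)$, which for a hyperbolic $\gamma$ is the line spanned by the periodic timelike Jacobi field $\mathcal Y$. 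Running the finite-dimensional reduction of Subsection~\ref{sub:jumpsindexfnctn} on this one-dimensional kernel, one checks that the first-order form $B_0$ vanishes identically — precisely because $\mathcal Y$, being a Jacobi field, already lies in $\Ker(I_1|_{\mathcal H_0^1(1)})$ — so the sign one needs is the next-order one; evaluating it by means of the explicit one-parameter family of boundary conditions together with $g(\mathcal Y,\mathcal Y)<0$ shows that the eigenvalue of $I_1|_{\mathcal H_0^\rho(1)}$ issuing from $0$ at $\rho=1$ stays $\ge0$, so no jump occurs and $c\le\mu_0(\gamma)$. In the singular case there is a shortcut avoiding the second-order computation: there $\mathcal H_0^1(1)=\mathcal H_*^1(1)$ by Lemma~\ref{thm:spazicoincsing}, while $\mathcal H_0^\rho(1)$ has codimension one in $\mathcal H_*^\rho(1)$ for $\rho\ne1$, so essential positivity (Proposition~\ref{thm:prop2.10}) together with the identity~\eqref{eq:acosaeugualeCv} forces the missing direction not to be negative in the limit, giving at once $c\le\mu(\gamma)=\mu_0(\gamma)$.

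I expect that second-order computation at $\rho=1$ to be the main obstacle: it is the one step not handed over directly by the machinery of Sections~\ref{sec:morsesturm}--\ref{sec:sequence}, and it is exactly the place where the specifically Lorentzian feature — the unavoidable timelike periodic Jacobi field, which forces $1$ into $\mathfrak s(\mathfrak P_\gamma)$ and produces a permanent one-dimensional nullity — has to be confronted by hand.
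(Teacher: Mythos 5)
Your overall plan matches the paper's strategy (reduce to constancy of $\Lambda_\gamma$ and feed it into Theorem~\ref{thm:iterazioneindice}), and you have in fact put your finger on a real subtlety that the paper's one-line proof — ``Immediate\ldots $\mu_0(\gamma)$ is the constant value of $\Lambda_\gamma$'' — passes over silently. As you observe, the initial data $\big(\mathcal Y(0),\Dds\mathcal Y(0)\big)$ of the periodic timelike Jacobi field is always a fixed point of $\mathfrak P_\gamma$: equivalently, $Y$ always lies in the kernel \eqref{eq:nucleo} at $\rho=N=1$ and $\mathcal Y\in\mathcal H_0^1(1)$, so $1\in\mathfrak s(\mathfrak P_\gamma)$ and $\nu_0(1,1)\ge1$ for \emph{every} stationary closed geodesic. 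Read literally, the paper's definition of hyperbolic (``no eigenvalues on $\mathds S^1$'') is therefore vacuous; under the only sensible non-vacuous reading (``no eigenvalues on $\mathds S^1\setminus\{1\}$ and $\Ker(\mathfrak P_\gamma-1)$ spanned by $(\mathcal Y(0),\Dds\mathcal Y(0))$''), $I_1\big|_{\mathcal H_0^1(1)}$ remains degenerate, so Corollary~\ref{thm:contindex} gives constancy of $\Lambda_\gamma$ only on $\mathds S^1\setminus\{1\}$ and the identity $c=\Lambda_\gamma(1)=\mu_0(\gamma)$ needs a separate argument. Lower semicontinuity yields $\mu_0(\gamma)\le c$, and the substantive point you correctly isolate is ruling out $c=\mu_0(\gamma)+1$.

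That is exactly where your proposal has a gap. You invoke the finite-dimensional reduction of Subsection~\ref{sub:jumpsindexfnctn}, assert without verification that the first-order form $B_0$ built from $\mathcal Y$ vanishes, and then claim, again without computation, that the ``next-order'' term forces the zero eigenvalue of $I_1\big|_{\mathcal H_0^\rho(1)}$ to stay nonnegative as $\rho$ leaves $1$. Nothing in Sections~\ref{sec:morsesturm}--\ref{sec:sequence} delivers that sign: Corollary~\ref{thm:cor4.4} only bounds the symmetric difference $|\lambda_0(e^{i\theta},1)-\lambda_0(e^{-i\theta},1)|$, which is already $0$ here by the conjugation symmetry of Proposition~\ref{thm:prop2.15}(c), and essential positivity (Proposition~\ref{thm:prop2.10}) together with lower semicontinuity only constrains the jump to lie in $\{0,1\}$, not which value it takes. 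Your singular ``shortcut'' is likewise incomplete: there the family $\{\mathcal H_0^\rho(1)\}_{\rho}$ is genuinely discontinuous at $\rho=1$ (that is the reason Proposition~\ref{thm:hoanalytic} excludes the $N$-th roots of unity), so \eqref{eq:acosaeugualeCv} and essential positivity do not by themselves control the limit of the index. To close the argument you would actually have to construct the perturbed eigenvector $Y_\rho\in\mathcal H_0^\rho(1)$ near $Y$ — and $Y_\rho$ is \emph{not} of the form $f\cdot Y$, since $g(Y_\rho',Y)-g(Y_\rho,Y')=f'\,g(Y,Y)$ forces $f$ constant — and compute the sign of $I_1(Y_\rho,Y_\rho)$; that calculation is neither in the paper nor in your proposal. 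In short: you have found the right place to worry, but the key sign computation is still missing.
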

\begin{proof}
Immediate using  Theorem~\ref{thm:iterazioneindice};
here $\mu_0(\gamma)$ is the constant value of the function $\Lambda_\gamma$ on the unit circle.
\end{proof}

Let us assume that the Killing vector field $\mathcal K$ is complete, and let us denote
by $\psi_t:M\to M$ its flow, $t\in\R$, which consists of global isometries of $(M,g)$. We recall that two closed
geodesics $\gamma_i:[0,1]\to M$, $i=1,2$, are said to be \emph{geometrically distinct} if
there exists no $t\in\R$ such that $\psi_t\circ\gamma_1\big([0,1]\big)=\gamma_2\big([0,1]\big)$.
Let us denote by $\Lambda M$ the free loop space of $M$, which consists of all closed curves
$c:[0,1]\to M$ of Sobolev class $H^1$, endowed with the $H^1$-topology. Moreover, given a spacelike
hypersurface $S\subset M$, let ${\mathcal N}_S$ be the
subset of $\Lambda M$ consisting of those curves $c$ such that the quantity $g(\dot c,\mathcal K)$ is constant
on $[0,1]$, and with $c(0)\in S$. $\mathcal N_S$ is a smooth, closed, embedded submanifold of $\Lambda M$;
let us recall from \cite{BilMerPic} (see also \cite{CanFloSan}) the following result:
\begin{prop}\label{thm:minconncomp}
Let $(M,g)$ be a Lorentzian manifold endowed with a complete timelike Killing vector field
and a compact Cauchy surface $S$. Then, there exists a closed geodesic in every connected component
of $\Lambda M$; more precisely, the inclusion ${\mathcal N}_S\hookrightarrow\Lambda M$ is a homotopy equivalence,
and the geodesic functional $f(c)=\frac12\int_0^1g(\dot c,\dot c)\,\mathrm ds$
is bounded from below and has a minimum point in every connected component of $\mathcal N_S$, which is a geodesic
in $(M,g)$. The Morse index of a critical point $\gamma$ of $f$ in $\mathcal N_S$ equals $\mu(\gamma)$.\qed
\end{prop}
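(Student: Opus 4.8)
The plan is to carry out the classical ``$t$-reduction'' of the stationary closed geodesic problem, as in \cite{BilMerPic,CanFloSan}. First I would put the metric in standard stationary form: since $\mathcal K$ is complete and timelike its flow lines are inextendible timelike curves, hence each meets the compact Cauchy surface $S$ exactly once, and one obtains a diffeomorphism $M\cong\R\times S_0$ under which $\mathcal K=\partial_t$, $S_0$ is a compact spacelike Cauchy surface, and $\mathfrak g=-\beta(x)\,(\de t+\omega_x)^2+\mathfrak g_0$ with $\beta\in C^\infty(S_0)$ positive, $\omega$ a $1$-form and $\mathfrak g_0$ a Riemannian metric on $S_0$. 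Compactness of $S_0$ gives $0<\beta_0\le\beta\le\beta_1$, and the fact that the slices $\{t\}\times S_0$ are spacelike forces the key bound $\theta_0:=\sup_{S_0}\beta\,\|\omega\|^2_{\mathfrak g_0}<1$. Since the manifolds $\mathcal N_S$ for different Cauchy surfaces are diffeomorphic through the flow $\psi_t$, we may take $S=\{0\}\times S_0$.

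Next I would reduce the variational problem. The projection $p\colon M\to S_0$ is a deformation retraction, so $p_*\colon\Lambda M\to\Lambda S_0$ is a homotopy equivalence. Writing $c(s)=\big(t(s),x(s)\big)$, the constraint $\mathfrak g(\dot c,\mathcal K)\equiv C_c$ reads $\dot t+\omega_x(\dot x)=-C_c/\beta(x)$, and periodicity of $t$ determines $C_c=-\big(\int_0^1\beta(x)^{-1}\,\de s\big)^{-1}\int_0^1\omega_x(\dot x)\,\de s$, while $c(0)\in S$ fixes the last constant of integration; hence $p_*$ restricts to a diffeomorphism $\mathcal N_S\to\Lambda S_0$, and by the two-out-of-three property the inclusion $\mathcal N_S\hookrightarrow\Lambda M$ is a homotopy equivalence. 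Substituting the constraint into the energy gives the reduced functional $J=f\circ(p_*|_{\mathcal N_S})^{-1}$ on $\Lambda S_0$,
\[J(x)=\tfrac12\int_0^1\mathfrak g_0(\dot x,\dot x)\,\de s-\tfrac12\,\frac{\big(\int_0^1\omega_x(\dot x)\,\de s\big)^2}{\int_0^1\beta(x)^{-1}\,\de s}.\]

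Then I would run the direct method for $J$ on each connected component of $\Lambda S_0$. Two applications of Cauchy--Schwarz give $\tfrac{(\int_0^1\omega_x(\dot x)\,\de s)^2}{\int_0^1\beta^{-1}\,\de s}\le\int_0^1\beta\,\omega_x(\dot x)^2\,\de s\le\theta_0\int_0^1\mathfrak g_0(\dot x,\dot x)\,\de s$, so that $J(x)\ge\tfrac12(1-\theta_0)\,\|\dot x\|_{L^2}^2\ge0$; thus $J$ is bounded below and coercive, and — using compactness of $S_0$, the Rellich embedding, and the continuity of the nonlocal term under weak $H^1$-convergence — it satisfies the Palais--Smale condition. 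Each free-homotopy class is weakly closed in $\Lambda S_0$, so $J$ attains its infimum there at some $x_0$, which a bootstrap argument shows is smooth. Unwinding the reduction yields $\gamma=(p_*|_{\mathcal N_S})^{-1}(x_0)\in\mathcal N_S$, a critical point of $f|_{\mathcal N_S}$. Since $\mathcal K$ is Killing, $\mathfrak g(\dot c,\mathcal K)$ is conserved along \emph{every} geodesic, so the constraint defining $\mathcal N_S$ is natural: the Lagrange multiplier of the constrained problem vanishes and the critical points of $f|_{\mathcal N_S}$ are genuine geodesics of $(M,\mathfrak g)$. This produces a closed geodesic in every connected component of $\mathcal N_S$, hence, via the homotopy equivalence, in every connected component of $\Lambda M$.

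Finally, the Morse index. At such a critical point $\gamma$ the Hessian of $f|_{\mathcal N_S}$ is the restriction of the second variation of the energy — that is, of the index form $\mathcal I_\gamma$ — to $T_\gamma\mathcal N_S$; a direct computation (carried out in \cite{BilMerPic}), using the Killing identity to match the linearized constraint with the defining condition of $\mathcal H^\gamma_*$, identifies this with the restriction of $\mathcal I_\gamma$ to $\mathcal H^\gamma_*$, whose index is $\mu(\gamma)$ by definition. I expect the main obstacle to be the Palais--Smale/coercivity step of the third paragraph, where the spacelike-slice inequality $\theta_0<1$ is essential, together with the verification that the constraint is natural; the remaining ingredients are the normal-form reduction and a standard second-variation computation.
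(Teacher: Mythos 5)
Your sketch is correct and reproduces exactly the $t$-reduction argument of the cited references \cite{BilMerPic,CanFloSan}, which is the proof the paper relies on (Proposition~\ref{thm:minconncomp} is stated without proof, being recalled from those works). In particular the standard-form normalization of the stationary metric, the spacelike-slice inequality giving coercivity and Palais--Smale, the naturality of the constraint via the Killing identity, and the identification of $T_\gamma\mathcal N_S$ with $\mathcal H^\gamma_*$ are precisely the steps carried out there.
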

Recall that arc-connected components of $\Lambda M$ correspond to conjugacy classes of the fundamental
group $\pi_1(M)$; given one such  component $\Lambda_*$, we will call \emph{minimal} a closed
geodesic $\gamma$ in $\Lambda_*$, with $\gamma(0)\in S$, which is a minimum point for the restriction
of $f$ to the arc-connected component of $\mathcal N_S$ containing $\gamma$.
If $M$ is not simply connected, Proposition~\ref{thm:minconncomp} gives a multiplicity of  minimal
closed geodesics, however, there is no way of telling whether these geodesics are geometrically
distinct. Let us recall that there is a continuous action of the orthogonal group $\mathrm O(2)$ on the
free loop space $\Lambda M$, obtained from the action of $\mathrm O(2)$ on the parameter space
$\mathds S^1$. The geodesic functional $f$ is invariant by this action. Let us also recall
that the stabilizer of each point in $\Lambda M$ is a finite cyclic subgroup of $\mathrm{SO}(2)$, and
thus the critical $\mathrm O(2)$-orbits of $f$ are smooth submanifolds of $\Lambda M$ that are
diffeomorphic to two copies of the circle $\mathds S^1$. Using the flow of the Killing field
$\mathcal K$, one has also a free $\R$-action on $\Lambda M$
given by $\R\times\Lambda M\ni(t,\gamma)\mapsto\psi_t\circ\gamma\in\Lambda M$, and $f$ is invariant
by this action. The quotient $\Lambda M/\R$ can be identified in an obvious way with the submanifold
$\mathcal N_S$; since the actions of $\R$ and of $\mathrm O(2)$  commute, one can define a continuous
$\mathrm O(2)$-action on $\mathcal N_S$.

Inspired by a classical Riemannian result proved in \cite{BalThoZil}, we give the following:
\begin{prop}\label{thm:BTZ}
Under the hypotheses of Proposition~\ref{thm:minconncomp}, assume that there exists a non trivial element
$\mathfrak a$ in $\pi_1(M)$ satisfying the following:
\begin{itemize}
\item there exist integers $n\ne m$ such that the free homotopy classes generated by the conjugacy classes of $\mathfrak a^n$ and $\mathfrak a^m$ coincide;
\item for all $N\ge1$, every geodesic in the free homotopy class of $\mathfrak a^N$ is strongly hyperbolic.
\end{itemize}
Then, there are infinitely many \emph{geometrically distinct} closed geodesics in $(M,g)$.
\end{prop}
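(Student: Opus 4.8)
The plan is to argue by contradiction, using the existence statement of Proposition~\ref{thm:minconncomp}, the iteration formula for strongly hyperbolic geodesics, and an $\mathrm O(2)$-equivariant minimax (mountain pass) argument on a suitable connected component of $\mathcal N_S$. First I would extract the purely algebraic content of the hypothesis: since $\mathfrak a^n$ and $\mathfrak a^m$ are conjugate in $\pi_1(M)$, an easy induction on $b$ shows that $\mathfrak a^{n^am^b}$ is conjugate to $\mathfrak a^{n^{a+b}}$ for all $a,b\ge0$. Fix $k\ge1$ and set $p_a=n^am^{k-a}$, $a=0,\dots,k$; then $p_0,\dots,p_k$ are pairwise distinct (as $n\ne m$), while $\mathfrak a^{p_0},\dots,\mathfrak a^{p_k}$ all lie in the conjugacy class of $\mathfrak a^{n^k}$, so, via the homotopy equivalence $\mathcal N_S\simeq\Lambda M$, the corresponding free loops all belong to a single connected component $\mathcal C_k$ of $\mathcal N_S$.

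Next I would describe the Morse picture in $\mathcal C_k$. Let $c$ be a minimal closed geodesic in the free homotopy class of $\mathfrak a$, provided by Proposition~\ref{thm:minconncomp}; it is non-constant (as $\mathfrak a\ne1$) and spacelike, so $f(c)>0$, and strongly hyperbolic by hypothesis, so from $\mu(c)=0$ we get $\mu_0(c)=0$. By the iteration lemma for strongly hyperbolic geodesics, $\mu(c^{p_a})=\mu_0(c^{p_a})=p_a\mu_0(c)=0$, while $\mathrm n(c^{p_a})=\Dim\Ker(\mathfrak P_c^{\,p_a}-1)=0$ because no eigenvalue of $\mathfrak P_c$ lies on $\mathds S^1$; hence each $c^{p_a}$ is a nondegenerate critical $\mathrm O(2)$-orbit of $f$ of index $0$, i.e. a strict local minimum orbit, lying in $\mathcal C_k$ at level $p_a^2 f(c)$. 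In particular $\mathcal C_k$ contains the two distinct strict local minimum orbits $c^{n^k}$ and $c^{m^k}$, at the distinct levels $n^{2k}f(c)$ and $m^{2k}f(c)$. Since $f$ is bounded below on $\mathcal C_k$ and (by \cite{BilMerPic}) satisfies the Palais--Smale condition there, the equivariant mountain pass lemma yields a critical $\mathrm O(2)$-orbit $\mathcal O_k\subset\mathcal C_k$ at a minimax level $c_k$ which strictly exceeds $\max\{n^{2k},m^{2k}\}f(c)$, because both endpoint orbits are strict local minima; and, arising from a $1$-parameter minimax, $\mathcal O_k$ has geodesic Morse index bounded by a universal constant $C$. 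Picking $\eta_k\in\mathcal O_k$ I obtain a closed geodesic in the free homotopy class of $\mathfrak a^{n^k}$ — hence strongly hyperbolic, in particular nondegenerate — with $f(\eta_k)=c_k\to\infty$, with $1\le\mu(\eta_k)\le C$ (it is not a local minimum, and it is nondegenerate), and with $\mu_0(\eta_k)=\mu(\eta_k)$ since $\epsilon_{\eta_k}=0$.

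Finally I would close the argument. Assume there are only finitely many geometrically distinct closed geodesics; we may take representatives $c_1,\dots,c_r$ that are primitive. Each $\eta_k$ then equals $\psi_{t_k}\circ c_{i(k)}^{\,N_k}$, up to reparametrization, for some $t_k\in\R$, $i(k)\le r$, $N_k\ge1$. As $\psi_{t_k}$ is an isometry and $\eta_k$ is strongly hyperbolic, so is $c_{i(k)}$ (using $\mathfrak P_{c^N}=(\mathfrak P_c)^N$ together with part~\eqref{itm:correctiontermiterated} of Theorem~\ref{thm:iterazioneindice}), and then the iteration lemma gives $N_k\mu_0(c_{i(k)})=\mu_0(\eta_k)=\mu(\eta_k)\in[1,C]$. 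Hence $\mu_0(c_{i(k)})\ge1$ and $N_k\le C$, so every $\eta_k$ lies in the finite set $\{\psi_t\circ c_i^{\,j}:1\le i\le r,\ 1\le j\le C,\ t\in\R\}$, on which $f$ takes only finitely many values (each $\psi_t$ preserves $f$). This contradicts $f(\eta_k)\to\infty$, so $(M,\mathfrak g)$ must carry infinitely many geometrically distinct closed geodesics.

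I expect the main obstacle to be the equivariant minimax step: one must secure the Palais--Smale condition for $f$ on each connected component of $\mathcal N_S$ (this should be available from \cite{BilMerPic}), and — the genuinely delicate point — one must show that the minimax critical orbit $\mathcal O_k$ can be taken with geodesic Morse index bounded independently of $k$, in spite of the non-trivial finite cyclic isotropy groups of the highly iterated loops $c^{n^k}$ and $c^{m^k}$. This requires a careful use of the $\mathrm O(2)$-equivariant deformation lemma and some bookkeeping of the dimensions of the critical orbits; it is also the place where one must be sure that $\mathcal O_k$ is distinct from the two local minimum orbits, which follows from the strict local-minimum property established above.
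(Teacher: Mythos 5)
Your proof is correct and rests on the same pillars as the paper's argument: (i) a minimal geodesic $c$ in the class of $\mathfrak a$ has $\mu(c)=0$, and strong hyperbolicity forces $\mu(c^N)=\mu_0(c^N)=N\mu_0(c)=0$ and $\mathrm n(c^N)=0$ for every $N$; (ii) the conjugacy $\mathfrak a^n\sim\mathfrak a^m$ puts suitable pairs of iterates in the same component of $\mathcal N_S$; (iii) Palais--Smale plus $\mathrm O(2)$-equivariant critical point theory produces an extra critical orbit of small index in that component. The route diverges only in the concluding step. The paper applies the strong Morse relations to get, for each $l$, an index-$1$ critical orbit $\mathrm O(2)c_l$ in the class of $\gamma^{nl}\sim\gamma^{ml}$, and then proves the $c_l$ are pairwise geometrically distinct directly from the iteration formula ($\mu(c_l^N)=N$, so no $c_l$ can be a nontrivial iterate of another). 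You instead take the pair $c^{n^k}$, $c^{m^k}$, note that they are strict nondegenerate local minima at energy levels $n^{2k}f(c)$ and $m^{2k}f(c)$, extract a mountain-pass orbit $\mathcal O_k$ whose energy exceeds $\max\{n^{2k},m^{2k}\}f(c)\to\infty$ while its index stays bounded, and then derive a contradiction from finiteness: a finite set of primitives $\{c_i\}$ would, via $\mu_0(\eta_k)=N_k\mu_0(c_{i(k)})\in[1,C]$, force $N_k\le C$ and hence a bounded set of energies. The two finishing moves are genuinely different: the paper's is a direct pairwise-distinctness argument among the newly produced orbits, while yours is an energy-growth contradiction against the whole collection of closed geodesics, which has the advantage of sidestepping the (somewhat delicate) question of whether the orbits $c_l$ produced at different values of $l$ might actually coincide when the corresponding free homotopy classes agree; on the other hand it needs the extra observation that the minimax levels diverge, which you secure by taking exponentially growing exponents $n^k,m^k$ (the paper uses the linear family $nl,ml$, for which the minimum levels also diverge, but it never needs this fact). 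The technical point you flag at the end---making the $\mathrm O(2)$-equivariant mountain pass respect the finite isotropy of highly iterated loops and produce a nondegenerate, non-minimal orbit of bounded index---is exactly the content of the paper's appeal to equivariant strong Morse relations, so your caution is well placed but the tool is available.
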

\begin{proof}
Let $\gamma$ be a minimal hyperbolic geodesic in the connected component of $\Lambda M$ determined by the
free homotopy class of $\mathfrak a$; then, $\gamma^n$ and $\gamma^m$ are freely homotopic,
and so are $\gamma^{nl}$ and $\gamma^{ml}$ for all $l\ge1$. Since $\gamma$ is minimal, then $\mu(\gamma)=0$,
and since $\gamma$ is hyperbolic, $\mu(\gamma^{nl})=\mu(\gamma^{ml})=0$ for all $l$. As proved in \cite{BilMerPic}, the geodesic
action functional is bounded from below and it satisfies the Palais--Smale condition on $\mathcal N_S$;
the (strong) hyperbolicity assumption implies that $f$ is an $\mathrm O(2)$-invariant Morse function
on each arc-connected component of $\mathcal N_S$ determined by the free homotopy class of some iterate of $\mathfrak a$.
Since $\gamma^{nl}$ and $\gamma^{ml}$ are in the same arc-connected component of $\mathcal N_S$
and they have index $0$, a classical result of equivariant Morse theory (strong Morse relations, see \cite{Cha, MawWil})
implies the existence of another
critical orbit $\mathrm O(2)c_l$, where $c_l\in\mathcal N_S$ is freely homotopic to $\gamma^{nl}$ and $\gamma^{ml}$, and whose Morse index
is equal to $1$. Observe that distinct critical $\mathrm O(2)$-orbits $\mathrm O(2)c_a$ and
$\mathrm O(2)c_b$ of $f$ in $\mathcal N_S$ determine geometrically
distinct closed geodesics if and only if $a$ and $b$ are not iterate one of the other.
By the strong hyperbolicity assumption, the  iterate $c_l^N$ has index equal to $N$ for all $N\ge1$;
in particular, the $c_l$'s are pairwise geometrically distinct, which concludes the proof.
\end{proof}
The question of existence and multiplicity of closed geodesics in stationary spacetimes seems more involved than in the Riemannian case, and there are few precedent results. We can cite for example \cite{Masiello2} for the existence of a closed geodesic and the recent paper \cite{BilMerPic} for a generalization of a Gromoll-Meyer type result. For the Riemannian case the main reference is \cite{Kli}.
\end{section}
\begin{section}{Final remarks, conjectures and open questions}

Hyperbolic closed geodesics that are singular are obviously strongly hyperbolic.
In view of Example~\ref{exa:casostatico}, in the case of static Lorentzian manifolds
Proposition~\ref{thm:BTZ} reproduces the central result in \cite{BalThoZil}.
On the other hand, the topological conditions on the fundamental group of
the manifold allows the authors of \cite{BalThoZil} to establish their infinitude results for a
\emph{generic} collection of Riemannian metrics on the given manifold.
This is based on a result due to Klingenberg and Takens \cite{KliTak}
that, given a compact
differential manifold $M$, the assumptions
of the Birkhoff--Lewis symplectic fixed point theorem holds for the Poincar\'e map
of every non hyperbolic closed geodesic for a $C^4$-generic set of Riemannian metrics
$\mathfrak g$ on $M$. No such result is known in Lorentzian geometry; even more,
it is not even known whether Lorentzian \emph{bumpy metrics} are generic.
Recall that a metric is bumpy if all its closed geodesics are nondegenerate;
in the case of stationary Lorentzian metrics, such definition clearly needs to be
adapted.

The genericity of Riemannian bumpy metrics on a compact manifold was proven
by Abraham in \cite{Abr}; a more recent elegant proof is given in  \cite{Whi}.
The central point in \cite{Whi} is that the Jacobi operator is strongly elliptic;
a similar property is satisfied by the differential operator obtained from the
second variation of the constrained variational problem in Proposition~\ref{thm:minconncomp}.
This suggests the conjecture that, also in the case of stationary Lorentzian manifolds
with a compact Cauchy surface, bumpy metrics are generic.

\smallskip

The recently developed theory of stationary Lorentzian closed geodesics and
their iteration (see also \cite{BilMerPic}) suggests that a number of classical Riemannian results
can be generalized to this context.
For instance, one cannot avoid mentioning a possible extension
to the stationary Lorentzian case of a result due to Bangert and Hinsgton \cite{BanHin}.
The authors' beautiful argument, based on Lusternik--Schnirelman theory, gives the existence
of infinitely many closed geodesics in compact Riemannian manifolds whose fundamental
group is infinite and abelian. We conjecture that the same result holds in the case of globally hyperbolic stationary Lorentzian
manifolds.
More results on the infinitude of closed Riemannian
based on the study of the homology generated by a tower of iterates
can be found in  \cite{BanKli}. Research in this direction for stationary Lorentzian manifolds
is being carried out, and it will be discussed in forthcoming papers.
\smallskip

Finally, we observe that a quite challenging task in the development of Morse theory for
closed Lorentzian geodesics would be removing the stationarity assumption. In this
case, one should deal with a truly strongly indefinite functional. Relations between
its critical points and the homological properties of the free loop space must then be
obtained by a more involved Morse theory based on a doubly infinite chain complex
determined by the dynamics of the gradient flow (see \cite{AbbMej} for the case of
geodesics between fixed endpoints). In this case, the notion
of Morse index has to be replaced by that of \emph{spectral flow} for a path
of Fredholm bilinear forms. We believe that the iteration results proven in this paper
generalize to spectral flows.

\end{section}

\end{document}